\documentclass[12pt]{amsart}
\usepackage{amsmath}
\usepackage{amssymb}
\usepackage{verbatim}
\usepackage{float}
\allowdisplaybreaks[4]

\newtheorem{theorem}{Theorem}[section]
\newtheorem{lemma}[theorem]{Lemma}
\newtheorem{proposition}[theorem]{Proposition}

 \theoremstyle{definition}
\newtheorem{definition}[theorem]{Definition}

\theoremstyle{remark}
\newtheorem{remark}[theorem]{Remark}

\numberwithin{equation}{section}

\begin{document}

\title[Neumann problem]
{The Neumann problem for a class of degenerate Hessian quotient type equations }

\author{Jiabao Gong}
\address{Faculty of Mathematics and Statistics, Hubei Key Laboratory of Applied Mathematics, Hubei University,  Wuhan 430062, P.R. China}
\email{202321104011284@stu.hubu.edu.cn}

\author{Qiang Tu$^{\ast}$}
\address{Faculty of Mathematics and Statistics, Hubei Key Laboratory of Applied Mathematics, Hubei University,  Wuhan 430062, P.R. China}
\email{qiangtu@hubu.edu.cn}

\keywords{Neumann problem; degenerate; Hessian quotient type equation;  a priori estimates.}

\subjclass[2010]{Primary 35J15, 35J60; Secondary 35B45.}
\thanks{This research was supported by funds from Natural Science Foundation of Hubei Province, China , No. 2023AFB730 and the National Natural Science Foundation of China No. 12101206.}
\thanks{$\ast$ Corresponding author}

\begin{abstract}
In this paper, we obtain some important inequalities for a class of Hessian quotient type operators $\frac{\sigma_k(\Lambda(D^2u))}{\sigma_l(\Lambda(D^2u))}$, which can be regarded as a generalization of  the classical Hessian quotient operators. As an application, we establish  global a priori estimates  and prove an existence theorem for the Neumann problem of the corresponding degenerate Hessian quotient type equation,
in which the admissible range of $k$  is extended to  $0< k \leq C^\mathbf{p}_n$ with $1 \leq \mathbf{p} \leq n-1$.
\end{abstract}

\maketitle
\section{Introduction}

Let $\Omega\subset\mathbb{R}^n$ be a domain with $C^4$ boundary, $u$ be a $C^2$ function on $\Omega$ and $\lambda=(\lambda_1,\cdots,\lambda_n)$ denote the eigenvalues of the Hessian matrix $D^2u$.
Given an integer $\mathbf{p}$ with $1\leq \mathbf{p}\leq n$, we define the following self-adjoint operator acting on the real vector space $\Lambda^\mathbf{p}\mathbb{R}^n$ as in Harvey-Lawson \cite{HL-2013}:
\begin{equation*}
\begin{aligned}
\mathcal{D}_{u}:&~~~~~~~\quad\Lambda^\mathbf{p}\mathbb{R}^n &&\longrightarrow \Lambda^\mathbf{p}\mathbb{R}^n \\
&v_1\wedge\cdots\wedge v_\mathbf{p} &&\longmapsto \sum_{i=1}^{\mathbf{p}} v_1 \wedge \cdots \wedge v_{i-1}\wedge ((D^2u)v_i)\wedge v_{i+1}  \cdots\wedge v_\mathbf{p},
\end{aligned}
\end{equation*}
where $v_1,\cdots,v_\mathbf{p}\in \mathbb{R}^n$.
For any
$$I=(i_1,\cdots,i_\mathbf{p})\in \mathfrak{J}(\mathbf{p},n):=\{(i_1,\cdots,i_\mathbf{p})|1\leq i_1<\cdots<i_\mathbf{p}\leq n\},$$
we define
$$
\Lambda_I(D^2 u)=\lambda_{i_1}+\cdots+\lambda_{i_\mathbf{p}}.
$$
For convenience, we  fix an order of the elements in $\mathfrak{J}(\mathbf{p},n)$:
\begin{equation*}
I_1,\cdots,I_N,~~\mbox{with}~~N=C^\mathbf{p}_n=\frac{n!}{\mathbf{p}!(n-\mathbf{p})!}.
\end{equation*}
Then the vector of eigenvalues of \(\mathcal D_u\) is given by
$$\Lambda(D^2u)=(\Lambda_{I_1}(D^2u),\cdots,\Lambda_{I_N}(D^2u)).$$

In this paper, we consider the following degenerate Hessian quotient type equation
\begin{equation}\label{(1.1)}
\frac{\sigma_k(\Lambda(D^2 u))}{\sigma_l(\Lambda(D^2 u))}=f(x), \quad \mbox{in}~ \Omega,
\end{equation}
where $0\leq l<k\leq N$, $\sigma_k$ is the $k$-th elementary symmetric function and $f$ is a nonnegative function. Recall that   G\aa rding's cone $\tilde{\Gamma}_{k}\subset \mathbb{R}^N$ is defined by
\begin{equation*}
\tilde{\Gamma}_{k}=\{\lambda \in \mathbb{R}^N: \sigma_{j}(\lambda)>0,~ \forall ~ 1\leq j \leq k\}.
\end{equation*}
If $\Lambda(D^2 u(x)) \in \tilde{\Gamma}_k$ for any $x\in \Omega$, we say $u$ is a $(\Lambda, k)$-admissible function.

 It is worth noting that  equation \eqref{(1.1)} includes several classical elliptic equations as special cases. For example, it reduces to the classical Hessian quotient equation when $\mathbf{p}=1$ and to the Poisson equation when $\mathbf{p}=n$. In particular,  equation \eqref{(1.1)} becomes the following Monge-Amp\`ere equation
 \begin{equation*}\label{(1.4)-1}
\sigma_n(\Delta uI-\nabla^2 u)=f(x),
\end{equation*}
when  $\mathbf{p}=n-1$, $k=n$ and $l=0$. This type of equation arises in the study of several fundamental geometric problems, including the ``form-type" Calabi-Yau equation introduced by Fu-Wang-Wu \cite{FWW10} and the Gauduchon conjecture in complex geometry, which was resolved by Sz$\acute{e}$kelyhidi-Tosatti-Weinkove \cite{STW17} and  Guan-Nie \cite{GN21}.

For the Dirichlet and Neumann problems for non-degenerate elliptic equations in $\mathbb{R}^n$, many results are well established. A priori estimates and existence results for the Dirichlet and Neumann problems with the Laplace equation can be found in \cite{GT}. The Dirichlet problem for the Monge-Amp\`ere equation was solved independently by Caffarelli-Nirenberg-Spruck \cite{CNS84} and Ivochkina \cite{IN}. Lions-Trudinger-Urbas considered the Neumann problem of the Monge-Amp\`ere equation  in their celebrated paper \cite{LION}. For the $k$-Hessian equation,  the Dirichlet problem was resolved by Caffarelli-Nirenberg-Spruck \cite{CNS85}, while the Neumann problem was solved by Trudinger \cite{ST87} in the case of a ball and later extended to strictly convex domains by Ma-Qiu \cite{MQ}.  For the Hessian quotient equation, the Dirichlet and Neumann problems were addressed  by Trudinger \cite{T95} and Chen-Zhang \cite{CZ}, respectively.

Related results for another class of Hessian quotient type equations have also been extensively studied. Chen-Tu-Xiang~\cite{CTX3} investigated the Dirichlet problem for the following Hessian quotient type equation
\begin{equation}\label{(1.4)}
\frac{\sigma_k(\Delta u I - \nabla^2 u)}{\sigma_l(\Delta u I - \nabla^2 u)} = f(x,u,Du),
\end{equation}
on Riemannian manifolds with $0 \le l < k - 1 \le n - 1$. Chen-Tu-Xiang~\cite{CTX2} also established a Pogorelov-type estimate for equation~\eqref{(1.4)}.
 The corresponding Neumann problem was studied  by Dong-Wei \cite{DW23} and Chen-Dong-Han \cite{CDH}.  Dinew \cite{Dinew} studied  the following $\mathbf{p}$-Monge-Amp\`ere type  equation
\begin{equation}\label{1.5}
\Pi_{1\leq i_1<\cdots<i_\mathbf{p}\leq n} \left(\lambda_{i_1}+\cdots+\lambda_{i_\mathbf{p}}\right) =f(x),
\end{equation}
 and obtained  first and second order interior estimates for $\mathbf{p}$-plurisubharmonic solutions.
 Then Chu-Dinew \cite{CD2023} established Liouville theorems and interior estimates for the $\mathbf{p}$-Monge-Amp\`ere type  equation \eqref{1.5}.
 The corresponding curvature equation has been studied  in
 \cite{Chu20, Dong22, Dong24, HMW2011}.

For the degenerate case,   since the right-hand side is nonnegative, it is more difficult to derive a priori estimates, and consequently relatively few results are available.
Guan-Trudinger-Wang~\cite{GTW99} studied the Dirichlet problem for the degenerate Monge-Amp\`ere equation.  They established a priori estimates independent of $\inf_{\overline{\Omega}} f$ under the condition that $f^{\frac{1}{n-1}} \in C^{1,1}(\overline{\Omega})$, and subsequently proved the existence of a unique convex solution $u \in C^{1,1}(\overline{\Omega})$. For the degenerate $k$-Hessian equation, Ivochkina-Trudinger-Wang \cite{ITW19} established a priori estimates and proved an existence theorem under the condition $f^{\frac{1}{k}} \in C^{1,1}(\overline{\Omega})$. They  conjectured that the condition $f^{\frac{1}{k-1}} \in C^{1,1}(\overline{\Omega})$ was a  sufficient condition for obtaining the existence theorem, but now it is still an open problem.
Subsequently, Wang-Xu \cite{27} proved the same existence result under a weaker condition on $f$ than $f^{\frac{1}{k}} \in C^{1,1}(\overline{\Omega})$.
Furthermore, Mei~\cite{Mei21} proved the existence of a  $C^{1,1}$-solution to the classical Neumann problem for the degenerate elliptic Hessian quotient equation under the condition that $f^{\frac{1}{k-l}} \in C^2(\overline{\Omega})$.

Recently, for equation \eqref{(1.1)},
 the authors \cite{GTL} established the global a priori estimates and proved an existence theorem for the Neumann problem  when  $f$ depends on $x, u$ and $Du$. It is worth noting that the condition $0 \leq l < k \leq C_{n-1}^{\mathbf{p}-1}$ is necessary to obtain a priori estimates, as  equation \eqref{(1.1)} is strictly elliptic precisely under this condition.

Naturally, we are interested in the degenerate Neumann problem for equation \eqref{(1.1)} in the more general case where $0 \leq l < k \leq N$ and $ 1 \leq \mathbf{p} \leq n-1$ . In order to obtain the main results, we introduce the definition of $(\Lambda, k)$-admissible weak solutions.
\begin{definition}\label{def-2}
 A function $u$ $\in$ $C^{0}( \overline {\Omega })$ is called a $(\Lambda, k)$-admissible weak solution of equation \eqref{(1.1)} if
 there exists a sequence of $(\Lambda, k)$-admissible functions $\{u_{m}\}\subset C^{2}(\overline{\Omega})$ such that
\begin{equation*}
u_m\to u\quad\mathrm{in}\:C^0(\overline{\Omega})
\end{equation*}
and
\begin{equation*}
\frac{\sigma_k(\Lambda(D^2 u_m))}{\sigma_l(\Lambda(D^2 u_m))}\to f\quad\mathrm{in}\:L_{\mathrm{loc}}^1(\Omega).
\end{equation*}
\end{definition}

 Based on the above notion, we establish the following existence result for the Neumann problem of the degenerate Hessian quotient type equation \eqref{(1.1)}.

\begin{theorem}\label{T1}
Let $1\leq \mathbf{p}<n$, $0 \leq l < k \leq N$, $\Omega\subset \mathbb{R}^n$ be a $C^4$ strictly convex domain and $\nu$ be the outer unit normal vector of $\partial\Omega$. Suppose that $f(x)$ is a nonnegative function  such that $f^{\frac1{k-l}}\in C^{2}(\overline{\Omega})$ and $\varphi(x)\in C^{3}(\partial\Omega)$. Then there exists a unique constant $c$ such that the Neumann problem
\begin{equation}\label{(1.6)}
\left\{\begin{matrix}
\frac{\sigma_k(\Lambda(D^2 u))}{\sigma_l(\Lambda(D^2 u))}=f(x)&in\:\Omega,\\\\u_\nu=c+\varphi(x)&on\:\partial\Omega,
\end{matrix}\right.
\end{equation}
has a  $(\Lambda, k)$-admissible weak solution $u\in C^{1,1}(\overline{\Omega})$ , which is unique up to a constant.
\end{theorem}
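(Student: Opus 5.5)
The plan is to approximate the degenerate problem by non-degenerate ones and pass to the limit using a priori estimates that do not depend on $\inf f$. For $\varepsilon\in(0,1)$ set $f_\varepsilon=\big(f^{\frac1{k-l}}+\varepsilon\big)^{k-l}$, so that $f_\varepsilon>0$ and $\|f_\varepsilon^{\frac1{k-l}}\|_{C^2(\overline\Omega)}$ is bounded independently of $\varepsilon$. I would first solve
\begin{equation*}
\frac{\sigma_k(\Lambda(D^2 u))}{\sigma_l(\Lambda(D^2 u))}=f_\varepsilon \ \text{in }\Omega,\qquad u_\nu=c_\varepsilon+\varphi \ \text{on }\partial\Omega .
\end{equation*}
Following Lions--Trudinger--Urbas, one first considers the auxiliary problem $u_\nu=-\delta u+\varphi$ with $\delta>0$. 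Uniform estimates in $\delta$ for $\delta u$, $Du$ and $D^2u$ together with the continuity method and Evans--Krylov give $C^{2,\alpha}$ solutions $u^\delta$. Then $\delta u^\delta\to -c_\varepsilon$ and $u^\delta-\frac1{|\Omega|}\int_\Omega u^\delta$ converges to a solution for the constant $c_\varepsilon$. Uniqueness of $c$ and of $u$ up to constants would come from the comparison principle for admissible solutions. Indeed, if two solutions had $c_1>c_2$, adding a constant to make $u_1-u_2$ attain its maximum on $\partial\Omega$ would contradict the Hopf lemma applied to the linearized (elliptic) operator.

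The core is the set of estimates independent of $\varepsilon$ (and $\delta$); their form depends on $\mathbf p$ only through the structure of $\Lambda$.

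\textbf{(i) $C^0$ and $C^1$.} These use admissibility. Since $\Lambda\in\tilde\Gamma_k\subset\tilde\Gamma_1$, we get $\sum_I\Lambda_I=C^{\mathbf p-1}_{n-1}\Delta u>0$, so $u$ is subharmonic. Comparison with $A|x|^2$ and the strict convexity of $\Omega$ then give $|\delta u|$ and gradient bounds by the Ma--Qiu type argument. The maximum principle is applied to $|Du|^2$ plus a multiple of the boundary defining function, and it uses only $\Delta u>0$ together with $\sum_IF^{II}\geq c>0$, which holds for the quotient operator.

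\textbf{(ii) Second derivatives.} This is where the concavity and inequalities for $G=\big(\sigma_k/\sigma_l\big)^{\frac1{k-l}}$ composed with $\Lambda$ proved earlier in the paper are needed. The equation is written as $G(\Lambda(D^2u))=\tilde f:=f_\varepsilon^{\frac1{k-l}}$. Because $\tilde f\in C^2$ uniformly, differentiating twice gives $G^{ij,rs}u_{ij\xi}u_{rs\xi}+G^{ij}u_{ij\xi\xi}=\tilde f_{\xi\xi}$. Concavity of $G$ controls the first term, and the right-hand side stays bounded without involving $f^{-1}$. This is the reason for using the exponent $\frac1{k-l}$.

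For the global bound one considers, as in Lions--Trudinger--Urbas and Ma--Qiu, the function $u_{\xi\xi}-v(x,\xi)+K|x|^2$ over directions $\xi$. Here $v$ corrects the tangential-normal terms near $\partial\Omega$ using $u_\nu=c+\varphi$ and $\varphi\in C^3$. This reduces matters to bounding the double normal derivative $u_{\nu\nu}$ on $\partial\Omega$. That bound is obtained with a barrier built from the boundary function $h=u_\nu-\varphi-c$, combined with $A\,d(x)$ and the strict convexity of $\partial\Omega$.

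The \textbf{main obstacle} is this step when $k>C^{\mathbf p-1}_{n-1}$. In that range the operator is not uniformly elliptic in the usual sense, and $\sum_I G^{II}$ and the individual $G^{ij}$ may degenerate as $f\to0$. I would first try to prove from the paper's new inequalities a lower bound of the form $\sum_{ij}G^{ij}\delta_{ij}\geq c_0>0$ that holds uniformly on $\tilde\Gamma_k$. This is plausible, since $G$ is concave and homogeneous of degree one, and the inequality $G(\Lambda)\le \sum G^{ij}(\cdots)$ along the direction $\Lambda(I)$ is independent of $f$. With such a lower bound, $K|x|^2$ dominates the bad terms at an interior maximum. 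Ellipticity with a possibly vanishing modulus is not needed in any other step, because only one-sided bounds $u_{\xi\xi}\le C$ are used, and $\Lambda\in\tilde\Gamma_1$ then gives two-sided bounds on $|D^2u|$.

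\textbf{(iii) Passing to the limit.} With $\|u_\varepsilon\|_{C^{1,1}}$ and $|c_\varepsilon|$ bounded uniformly after normalizing $\int u_\varepsilon=0$, Arzel\`a--Ascoli gives $u_\varepsilon\to u$ in $C^{1,\beta}$ and $c_\varepsilon\to c$. The limit $u\in C^{1,1}(\overline\Omega)$ satisfies $u_\nu=c+\varphi$. Moreover $f_\varepsilon\to f$ uniformly, so $u$ is a $(\Lambda,k)$-admissible weak solution in the sense of Definition \ref{def-2}. Uniqueness of $c$ and of $u$ up to constants passes to the limit through the comparison argument applied to the approximations. I would try to rule out two different limits by showing that each weak solution is approximated by the $u_\varepsilon$, using the uniqueness available at each level $\varepsilon$.
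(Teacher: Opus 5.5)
Your overall architecture matches the paper's: a positive approximation of $f$ with $f^{\frac1{k-l}}$ bounded in $C^2$, the Lions--Trudinger--Urbas perturbation $u_\nu=-\delta u+\varphi$, estimates independent of $\delta$ and $\inf f$, then compactness. Your $C^0$ and $C^1$ steps, and the interior case of reducing $|D^2u|$ to $\sup_{\partial\Omega}|u_{\nu\nu}|$, need only the trace bound $\sum_i\tilde F^{ii}\ge C_{\mathbf p}>0$. That bound is not something to derive from the paper's new inequalities; it is the standard Proposition \ref{P4}(iii). So you have put the obstacle in the wrong place. Your assertion that no further ellipticity is needed is where the proposal fails: a barrier plus strict convexity does not bound $u_{\nu\nu}$ on $\partial\Omega$ when $k>C^{\mathbf p-1}_{n-1}$.

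\textbf{Why the barrier alone fails.} Take a barrier $P=(1+\beta d)\Phi-(A+\tfrac12M)h$, where $\Phi=Du\cdot(-Dd)+\delta u-\varphi$ vanishes on $\partial\Omega$, $h=-d+d^2$, and $M=\max_{\partial\Omega}u_{\nu\nu}$.
\begin{itemize}
\item In a frame diagonalizing $D^2u$, the linearization at an interior extremum contains $-2(1+\beta d)\tilde F^{ii}u_{ii}d_{ii}$.
\item By strict convexity and $\sum_i\tilde F^{ii}u_{ii}=\tilde f$, this is bounded only by $C\big(\tilde f+\sum_{u_{ii}<0}\tilde F^{ii}|u_{ii}|\big)$. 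That can be as large as $CM_2(1+M)\sum_i\tilde F^{ii}$, where $M_2$ is the constant in $|D^2u|\le M_2(1+\max_{\partial\Omega}|u_{\nu\nu}|)$.
\item Such very negative eigenvalues are allowed by admissibility, unlike the convex Monge--Amp\`ere case of Lions--Trudinger--Urbas.
\item The barrier contributes only $(A+\tfrac12M)c_0\sum_i\tilde F^{ii}$ with $c_0$ small. The coefficient of $M$ there must stay below $1$, or the final inequality $P_\nu(z_0)\le0$ at the point where $u_{\nu\nu}=M$ gives nothing. So this term cannot absorb $CM_2M$.
\end{itemize}

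\textbf{What the paper does instead.} The extra good term comes from the factor $1+\beta d$, which produces $-2\beta\tilde F^{ii}u_{ii}d_i^2$. The first-order condition $P_i=0$ forces $u_{11}\ge\frac15(A+M)$ in a direction with $d_1^2\ge\frac1n$. This term dominates only if $\tilde F^{11}\ge C\sum_i\tilde F^{ii}$. For $k>C^{\mathbf p-1}_{n-1}$ that inequality fails in general, because the derivative in a large-eigenvalue direction can be negligible compared with the trace.

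Proving it in exactly the configurations that arise is the paper's main new contribution:
\begin{itemize}
\item Lemma \ref{l2}, for the upper bound on $u_{\nu\nu}$ when $u_{nn}$ is very negative relative to $u_{11}$. There $\lambda_1$ is bounded below by a fixed multiple of $\lambda_2$, thanks to $u_{11}\gtrsim A+M$ and $u_{22}\le M_2(1+M)$, and $-\lambda_n$ is bounded below by a fixed multiple of $\lambda_1$. The remaining cases close using the equation alone.
\item Lemma \ref{f11} ($\lambda_1<0$), for the lower bound on $u_{\nu\nu}$, where the same mechanism runs with $u_{11}\le-\frac15(A+M)$.
\end{itemize}
Without these partial uniform-ellipticity estimates, or a substitute, your step (ii) does not close in the range of $k$ the theorem is about.
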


\begin{remark}
For the Neumann problem of Hessian quotient type equation \eqref{(1.6)}, it is readily observed that the equation is invariant under the addition of an arbitrary constant to the solution.  Consequently, a uniform a priori bound on the solutions cannot be established, which precludes the direct application of the method of continuity to prove existence.  To overcome this, we adopt the approximation scheme introduced by Lions-Trudinger-Urbas \cite{LION} (see also Qiu-Xia \cite{QX}), considering the solution $ u_\varepsilon $ of the perturbed equation
\begin{equation}\label{(1.7)}
\left\{\begin{matrix}
\frac{\sigma_k(\Lambda(D^2 u))}{\sigma_l(\Lambda(D^2 u))}=f(x)&in\:\Omega,\\\\u_\nu=-\varepsilon u+\varphi(x)&on\:\partial\Omega,
\end{matrix}\right.
\end{equation}
for each sufficiently small $\varepsilon>0$. We aim to establish uniform a priori estimates for $u_{\varepsilon}$ that are independent of $\varepsilon$ and $\inf_{\overline{\Omega}} f$. With such estimates in hand, the  solution to \eqref{(1.6)} can be obtained by passing to the limit as $\varepsilon\rightarrow0$, combined with a standard perturbation argument.
\end{remark}

Based on the a priori estimates for \eqref{(1.7)}, we can also establish an existence result for the Hessian quotient type equation with Neumann boundary condition in the non-degenerate case.
\begin{theorem}\label{T2}
Let $1\leq \mathbf{p}<n$, $0 \leq l < k \leq N$, $\Omega\subset \mathbb{R}^n$ be a $C^4$ strictly convex domain and $\nu$ be the outer unit normal vector of $\partial\Omega$. Suppose that $f(x)$ is a positive function,  $f^{\frac1{k-l}}\in C^{2}(\overline{\Omega})$ and $\varphi(x)\in C^{3}(\partial\Omega)$. Then there exists a unique $(\Lambda, k)$-admissible  solution $u\in C^{3,\alpha}(\overline{\Omega})$ to the Neumann problem for the Hessian quotient type equation
\begin{equation}\label{(1.8)}
\left\{\begin{matrix}
\frac{\sigma_k(\Lambda(D^2 u))}{\sigma_l(\Lambda(D^2 u))}=f(x)&in\:\Omega,\\\\u_\nu=-u+\varphi(x)&on\:\partial\Omega.
\end{matrix}\right.
\end{equation}
\end{theorem}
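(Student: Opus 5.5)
We will prove Theorem \ref{T2} by the method of continuity combined with the Evans--Krylov theory. Its ingredients are a priori $C^2$ estimates for $(\Lambda,k)$-admissible solutions of \eqref{(1.8)} that depend only on $n,k,l,\mathbf{p}$, $\Omega$, $\|f^{\frac1{k-l}}\|_{C^2}$ and $\|\varphi\|_{C^3}$. These are exactly the $\varepsilon$-independent estimates for \eqref{(1.7)} with $\varepsilon=1$, which the paper develops for Theorem \ref{T1}. Because $f>0$ on $\overline\Omega$, once $|D^2u|$ is bounded the vector $\Lambda(D^2u)$ lies in a compact subset of $\tilde\Gamma_k$. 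On such a set the operator $F=(\sigma_k/\sigma_l)^{\frac1{k-l}}(\Lambda(D^2u))$ is uniformly elliptic and concave. Ellipticity holds in all directions of $D^2u$ because $\partial F/\partial u_{ij}$ is a positive combination of the $\partial F/\partial\Lambda_I$, and every index $i$ belongs to some $I$. Concavity follows since $\Lambda$ is linear in $D^2u$ and $\sigma_k/\sigma_l$ raised to the power $\frac{1}{k-l}$ is concave on $\tilde\Gamma_k$.

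\textbf{Estimates.} The $C^0$ bound comes from the maximum principle with the oblique condition $u_\nu+u=\varphi$. At a boundary maximum point $u_\nu\ge0$, so $u\le\varphi$ there. An interior maximum is impossible for subharmonic-type functions, since admissibility gives $\Delta u>0$ (note $\sigma_1(\Lambda)=C_{n-1}^{\mathbf p-1}\Delta u$). The lower bound is obtained by comparison with $A|x|^2-B$ for suitable constants $A,B$. The gradient bound and the global $C^2$ bound are then quoted from the a priori estimates for \eqref{(1.7)}, taking $\varepsilon=1$. The $C^2$ bound is the usual chain: an interior reduction to the boundary, a double normal estimate using strict convexity of $\Omega$, and a tangential estimate. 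With $D^2u$ bounded and $f\ge\min f>0$, we obtain uniform ellipticity. The Evans--Krylov estimate for concave equations with oblique boundary conditions (Lieberman--Trudinger) then gives $C^{2,\alpha}$, and Schauder theory, using $f\in C^{1,\alpha}$ and $\varphi\in C^{3}$, upgrades this to $C^{3,\alpha}$.

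\textbf{Continuity.} Consider the family
\[
\frac{\sigma_k}{\sigma_l}(\Lambda(D^2u^t))=tf+(1-t)f_0,\qquad u^t_\nu=-u^t+t\varphi+(1-t)\varphi_0 ,
\]
where $u^0=\frac12|x|^2$, $f_0=\frac{\sigma_k}{\sigma_l}(\mathbf p,\dots,\mathbf p)$ and $\varphi_0=u^0_\nu+u^0$. Since $(tf+(1-t)f_0)^{\frac1{k-l}}$ is bounded in $C^2$ uniformly in $t$, the estimates above are uniform in $t$. Openness follows from the implicit function theorem: the linearized problem $L v=g$ in $\Omega$, $v_\nu+v=h$ on $\partial\Omega$, is uniquely solvable in $C^{2,\alpha}$ because the zero-order boundary term has the good sign, so the Hopf lemma gives the kernel is trivial. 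Closedness follows from the uniform $C^{3,\alpha}$ bound together with Arzel\`a--Ascoli. Admissibility is preserved along the path, since the right-hand side stays positive and the solutions remain in the connected component $\tilde\Gamma_k$.

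\textbf{Uniqueness.} If $u,v$ both solve \eqref{(1.8)}, then $w=u-v$ satisfies $a^{ij}w_{ij}=0$ with $a^{ij}$ elliptic, obtained by integrating the derivative of $F$ along the segment joining the two Hessians. This uses convexity of $\tilde\Gamma_k$, so the segment stays inside the cone. The function $w$ also satisfies $w_\nu+w=0$ on $\partial\Omega$. At a positive maximum the Hopf lemma gives $w_\nu>0$, which contradicts $w_\nu=-w<0$. Hence $w\le0$, and by symmetry $w\equiv0$.

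The main obstacle is the $C^2$ estimate, and in particular the double-normal boundary bound for $k$ beyond $C^{\mathbf p-1}_{n-1}$. There the operator is not uniformly elliptic a priori, so the estimate relies on the paper's new inequalities for $\sigma_k(\Lambda)/\sigma_l(\Lambda)$. We take this bound as given from the estimates for \eqref{(1.7)}.
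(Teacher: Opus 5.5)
Your proposal is correct and follows the same route as the paper: take $\varepsilon=1$ in the a priori estimates of Theorems \ref{C0}, \ref{C1} and \ref{c25}, then run the method of continuity with Evans--Krylov/Lieberman--Trudinger and Schauder regularity. You also supply the standard details the paper omits: the explicit path, Fredholm solvability of the linearized oblique problem thanks to the good sign of the boundary term, and uniqueness via linearization plus the Hopf lemma. One small technical point, also glossed over in the paper: the $C^2$ estimates are derived for $u\in C^4(\Omega)$, which $f\in C^2$ alone does not guarantee, so you should first treat smooth positive $f$ and then approximate, as in the proof of Theorem \ref{T1}.
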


\begin{remark}
Compared with the result in~\cite{GTL}, our result is established under a stronger assumption on the domain, but it applies to a broader range of $k$. At present, the strict convexity assumption on the domain appears to be indispensable.
\end{remark}

When $0\leq l<k\leq C_{n-1}^{\mathbf{p}-1}$, the operator $\frac{\sigma_k(\Lambda(\lambda))}{\sigma_l(\Lambda(\lambda))}$ turns out to satisfy the following strong property
$$\frac{\partial \left[\frac{\sigma_k(\Lambda(\lambda))}{\sigma_l(\Lambda(\lambda))}\right]}{\partial \lambda_i} \geq C \sum_i\frac{\partial \left[\frac{\sigma_k(\Lambda(\lambda))}{\sigma_l(\Lambda(\lambda))}\right]}{\partial \lambda_i}$$
for $\Lambda(\lambda) \in \tilde{\Gamma}_k$ (see Lemma 2.4 in \cite{ZJ}). This property enables us to establish a priori estimates  and the existence theorem with very few assumptions  for the Neumann problem of  equation \eqref{(1.1)} in \cite{GTL}. For $0\leq l <k\leq N$, whether the same property continues to hold remains unknown.
Our main contribution in this paper is to show that the similar properties hold true for $\frac{\sigma_k(\Lambda(\lambda))}{\sigma_l(\Lambda(\lambda))}$ under some assumptions (see Lemma \ref{f11}, Lemma \ref{l2}), which is crucial for establishing a priori estimates  for the Neumann problem  \eqref{(1.6)} and \eqref{(1.7)}.

The rest of this paper is organized as follows.
Section 2 presents some properties of $\Lambda_I(D^2 u)$ and establishes two important Lemmas. Section 3 is devoted to the $C^0$ estimates.
Global gradient estimates are derived in Section 4, while the second-order estimates are obtained in Section 5. Finally, in Section 6, we prove the existence theorem.

\section{Preliminaries}
In this section, we recall the definition and some basic properties of elementary symmetric functions, which could be found in~\cite{CCQ, GN}, and establish some key lemmas.

\subsection{Basic properties of elementary symmetric functions}\

Let $\lambda=(\lambda_1,\dots,\lambda_n)\in\mathbb{R}^n$, we recall
the definition of elementary symmetric functions for $1\leq k\leq n$,
\begin{equation*}
\sigma_k(\lambda)= \sum _{1 \le i_1 < i_2 <\cdots<i_k\leq
n}\lambda_{i_1}\lambda_{i_2}\cdots\lambda_{i_k}.
\end{equation*}
We also set $\sigma_0=1$ and $\sigma_k=0$ for $k>n$ or $k<0$. The  G\aa rding's cone is defined by
\begin{equation*}
\Gamma_k  = \{ \lambda  \in \mathbb{R}^n :\sigma _i (\lambda ) >
0,~\forall~ 1 \le i \le k\}.
\end{equation*}
We denote $\sigma_{k-1}(\lambda|i)=\frac{\partial
\sigma_k}{\partial \lambda_i}$ and
$\sigma_{k-2}(\lambda|ij)=\frac{\partial^2 \sigma_k}{\partial
\lambda_i\partial \lambda_j}$. Next, we list some properties of
$\sigma_k$ which will be used later.

\begin{proposition}\label{P1}
Let $\lambda=(\lambda_1,\dots,\lambda_n)\in\mathbb{R}^n$ and $1\leq k\leq n$, then we have
\begin{enumerate}
\item[(\romannumeral1)]  $\Gamma_k $ are convex cones and $\Gamma_1\supset\Gamma_2\supset\cdots \supset\Gamma_n$.
\item [(\romannumeral2)]   $\sigma_{k-1}(\lambda|i)>0$ for $\lambda\in\Gamma_k,$ and $1\leq i\leq n$.
\item [(\romannumeral3)]  $\sigma_k(\lambda)=\sigma_k(\lambda|i)
+\lambda_i\sigma_{k-1}(\lambda|i)$ for $1\leq i\leq n$.
\item [(\romannumeral4)] If $\lambda\in\Gamma_k$  with $\lambda_1\geq \cdots\geq \lambda_k\geq \cdots \geq \lambda_n$, then
$$\lambda_1 \sigma_{k-1}(\lambda |1) \geq \frac{k}{n}\sigma_k(\lambda), \quad \sigma_{k-1}(\lambda|k)\geq C(n,k)\sigma_{k-1}(\lambda),$$
and
$$\sigma_{k-1}(\lambda|1)\leq \sigma_{k-1}(\lambda|2)\leq\cdot\cdot\cdot\leq \sigma_{k-1}(\lambda|n).$$
\item [(\romannumeral5)] Newton-MacLaurin inequality: If $\lambda\in\Gamma_k,~n\geq k> l \geq 0$, $ n\geq r > s \geq 0$, $k\geq r$, $l\geq s$, then
\begin{equation}\label{lem21}
\left[\frac{{\sigma _k (\lambda )}/{C_n^k }}{{\sigma _l (\lambda)}/{C_n^l }}\right]^{\frac{1}{k-l}}
\leq \left[\frac{{\sigma _r(\lambda )}/{C_n^r }}{{\sigma _s (\lambda )}/{C_n^s}}\right]^{\frac{1}{r-s}}. \notag
\end{equation}
\item [(\romannumeral6)]  If $\lambda\in\Gamma_k,~0\leq l<k\leq n,$~then
$$\sum\limits_{i=1}^n \frac{\partial \left[\frac{\sigma_k(\lambda)}{\sigma_l(\lambda)}\right]^{\frac{1}{k-l}}}{\partial \lambda_i}\geq
\left[\frac{C^k_n}{C^l_n}\right]^{\frac{1}{k-l}}.$$

\item [(\romannumeral7)]  If $\lambda\in\Gamma_k,~0\leq l<k\leq n$,~then $\left[\frac{\sigma_k(\lambda)}{\sigma_l(\lambda)}\right]^{\frac{1}{k-l}}$ are concave functions in $\Gamma_k$.
\end{enumerate}
\end{proposition}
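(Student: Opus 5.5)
Proposition \ref{P1} collects classical facts about elementary symmetric functions, so the plan is to recall or sketch their standard proofs rather than develop anything new; each item is either an algebraic identity or a consequence of the hyperbolicity of $\sigma_k$ in the direction $(1,\dots,1)$ (G\aa rding's theory).

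\emph{Algebraic items.} Item (iii) follows by splitting the sum defining $\sigma_k(\lambda)$ into the monomials that do not contain $\lambda_i$, which give $\sigma_k(\lambda|i)$, and those that do, which give $\lambda_i\sigma_{k-1}(\lambda|i)$.

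\emph{Consequences of G\aa rding's theory.} For (i), $\sigma_k$ is hyperbolic with respect to $e=(1,\dots,1)$, and $\Gamma_k$ is the connected component of $\{\sigma_k>0\}$ containing $e$. G\aa rding's theorem then gives convexity. The nesting follows because the derivative $\sum_i\partial_i\sigma_k=(n-k+1)\sigma_{k-1}$ is again hyperbolic, with a cone containing $\Gamma_k$.

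For (ii), I would use induction on $k$. Since $\sigma_{k-1}(\lambda|i)$ is $\sigma_{k-1}$ evaluated on the $(n-1)$-vector $\lambda$ with $\lambda_i$ omitted, it suffices to show that this vector lies in $\Gamma_{k-1}$ of $\mathbb{R}^{n-1}$. This is obtained by letting $\lambda_i\to+\infty$ inside the cone, using (i) together with the fact that the cone is invariant under adding positive multiples of $e_i$, which is a consequence of (ii) at lower order.

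For (iv):
\begin{itemize}
\item The ordering $\sigma_{k-1}(\lambda|i)\le\sigma_{k-1}(\lambda|j)$ for $\lambda_i\ge\lambda_j$ comes from the identity $\sigma_{k-1}(\lambda|j)-\sigma_{k-1}(\lambda|i)=(\lambda_i-\lambda_j)\sigma_{k-2}(\lambda|ij)$, with $\sigma_{k-2}(\lambda|ij)>0$ by (ii) applied twice.
\item The bound $\lambda_1\sigma_{k-1}(\lambda|1)\ge\frac kn\sigma_k$ follows from $k\sigma_k=\sum_i\lambda_i\sigma_{k-1}(\lambda|i)$. Combined with the ordering, each term with $\lambda_i>0$ satisfies $\lambda_i\sigma_{k-1}(\lambda|i)\le\lambda_1\sigma_{k-1}(\lambda|1)$; this uses the observation that $\lambda_i\sigma_{k-1}(\lambda|i)$ is nondecreasing in the relevant sense, obtained from $\sigma_k=\sigma_k(\lambda|i)+\lambda_i\sigma_{k-1}(\lambda|i)$, and the terms with $\lambda_i\le0$ contribute nonpositively.
\item The bound $\sigma_{k-1}(\lambda|k)\ge C\sigma_{k-1}$ is the standard estimate from Lin--Trudinger, proved by writing $\sigma_{k-1}=\sigma_{k-1}(\lambda|k)+\lambda_k\sigma_{k-2}(\lambda|k)$ and inducting on $k$ using $\lambda_k\le\lambda_{k-1}$.
\end{itemize}

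\emph{Newton--MacLaurin, concavity and the derivative bound.} Item (v) reduces, by telescoping, to the Newton inequality $S_{j-1}S_{j+1}\le S_j^2$ for the normalized functions $S_j=\sigma_j/C_n^j$. Newton's inequality holds for all real $\lambda$, and positivity of $S_1,\dots,S_k$ in $\Gamma_k$ lets us divide and obtain monotonicity of $(S_j/S_{j-1})$ in $j$. The ratios $(S_k/S_l)^{1/(k-l)}$ are then geometric means of these monotone quotients, which gives (v).

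For (vii), the standard route is that $\sigma_k/\sigma_{k-1}$ is concave on $\Gamma_k$ (Huisken--Sinestrari, via a direct Hessian computation). Then $(\sigma_k/\sigma_l)^{1/(k-l)}$ is the geometric mean of the concave, positive, homogeneous-of-degree-one functions $\sigma_j/\sigma_{j-1}$ for $j=l+1,\dots,k$, and a geometric mean of nonnegative concave functions is concave.

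For (vi), let $F=(\sigma_k/\sigma_l)^{1/(k-l)}$, which is concave and homogeneous of degree one. Concavity gives $F(e)\le F(\lambda)+\sum_i F_i(\lambda)(1-\lambda_i)$, and Euler's identity gives $\sum_iF_i\lambda_i=F(\lambda)$. Combining the two yields $\sum_i F_i\ge F(e)=(C_n^k/C_n^l)^{1/(k-l)}$.

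The main obstacle is the concavity in (vii), which in particular requires the concavity of $\sigma_k/\sigma_{k-1}$. For this step I would cite the known proof rather than reproduce the computation.
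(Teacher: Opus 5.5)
\textbf{Gap in (iv).} The paper does not prove this proposition; it only refers to the literature, so sketching the standard arguments is a reasonable route. Your sketches of (i), (iii), (v), (vi), (vii) and of the monotonicity $\sigma_{k-1}(\lambda|1)\le\cdots\le\sigma_{k-1}(\lambda|n)$ are sound. The genuine gap is your proof of $\lambda_1\sigma_{k-1}(\lambda|1)\ge\frac kn\sigma_k(\lambda)$.

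The termwise bound $\lambda_i\sigma_{k-1}(\lambda|i)\le\lambda_1\sigma_{k-1}(\lambda|1)$ for $\lambda_i>0$ is false. By (iii),
\[
\lambda_1\sigma_{k-1}(\lambda|1)-\lambda_i\sigma_{k-1}(\lambda|i)=\sigma_k(\lambda|i)-\sigma_k(\lambda|1)=(\lambda_1-\lambda_i)\,\sigma_{k-1}(\lambda|1i).
\]
Membership in $\Gamma_k$ controls the sign of $\sigma_{k-2}(\lambda|1i)$, but not of $\sigma_{k-1}(\lambda|1i)$. The ordering you invoke also points the wrong way: $\lambda_i\le\lambda_1$ while $\sigma_{k-1}(\lambda|i)\ge\sigma_{k-1}(\lambda|1)$, so nothing follows for the product. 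Concretely, take $n=3$, $k=2$, $\lambda=(2,1,-\frac12)\in\Gamma_2$. Then $\lambda_1\sigma_1(\lambda|1)=1<\frac32=\lambda_2\sigma_1(\lambda|2)$.

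Any version of the strategy that discards the nonpositive terms of $k\sigma_k=\sum_i\lambda_i\sigma_{k-1}(\lambda|i)$ is too lossy. For $\lambda=(100,1,-\frac{99}{101})\in\Gamma_2$ we have $\sigma_2=1$. The positive terms sum to $101$, whereas $3\lambda_1\sigma_1(\lambda|1)=\frac{600}{101}<6$. The inequality holds only because the negative term $-99$ cancels almost everything.

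\textbf{A correct argument.} Avoid the termwise comparison. Write $\sigma_k(\lambda)=\lambda_1\sigma_{k-1}(\lambda|1)+\sigma_k(\lambda|1)$ and bound the second term. By (ii) in the strong form you prove, $\mu=\lambda|1$ lies in $\Gamma_{k-1}$ of $\mathbb{R}^{n-1}$. The Newton inequalities only need $\sigma_1(\mu),\dots,\sigma_{k-1}(\mu)>0$ to be divided through, so they give
\[
\sigma_k(\mu)\le\frac{n-k}{k(n-1)}\,\sigma_1(\mu)\,\sigma_{k-1}(\mu)\le\frac{n-k}{k}\,\lambda_1\sigma_{k-1}(\lambda|1).
\]
Here we used $\sigma_1(\mu)\le(n-1)\lambda_1$ and $\sigma_{k-1}(\mu)>0$. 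Hence $\sigma_k(\lambda)\le\frac nk\lambda_1\sigma_{k-1}(\lambda|1)$. For $k=n$, simply note $\sigma_n(\lambda|1)=0$.

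\textbf{Two smaller points in (ii).} First, letting $\lambda_i\to+\infty$ only yields $\sigma_{j-1}(\lambda|i)\ge0$. Strict positivity needs an extra step, for instance applying the argument to $\lambda-\eta(1,\dots,1)\in\Gamma_k$ and using openness. Second, the invariance $\Gamma_k+\mathbb{R}_{+}e_i\subset\Gamma_k$ does not come from ``(ii) at lower order''. Since $\sigma_k(\lambda+te_i)=\sigma_k(\lambda)+t\sigma_{k-1}(\lambda|i)$, that route would need (ii) at level $k$ itself, which is circular. Instead, it follows from convexity and openness of $\Gamma_k$ together with $\Gamma_n\subset\Gamma_k$.
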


 For $\lambda=(\lambda_1,\cdots,\lambda_n)\in \mathbb{R}^n$ and $1\leq \mathbf{p}\leq n$, recall the notation $\Lambda(\lambda)=(\Lambda_{I_1},\cdots,\Lambda_{I_N})$,
$$\Lambda_{I_s}=\sum_{i_j\in I_s} \lambda_i=\lambda_{i_1}+\cdots+\lambda_{i_\mathbf{p}}, \quad \forall~~ I_s=(i_1, \cdots, i_\mathbf{p})\in \mathfrak{J}.$$
Then $\Lambda(\lambda)$ satisfies the following basic properties; see Lemma 2.3--2.5 of \cite{ZJ}.

\begin{proposition}\label{P4}
Let $\lambda=(\lambda_1,\lambda_2,\cdots,\lambda_n)\in \mathbb{R}^n$ with $\lambda_1\geq \lambda_2\geq\cdots\geq \lambda_n$ and $\Lambda(\lambda)\in \tilde{\Gamma}_k$. If
$\Lambda(\lambda)=(\Lambda_{I_1},\cdots,\Lambda_{I_N})$ with $\Lambda_{I_1}\geq \cdots\geq \Lambda_{I_N}$, then
\begin{enumerate}
\item[(\romannumeral1)]  $\Lambda_{I_1}=\lambda_1+\cdots+\lambda_\mathbf{p}$ and~~~$\Lambda_{I_N}=\lambda_{n-\mathbf{p}+1}+\cdots+\lambda_n$.
\item [(\romannumeral2)]   We have
\begin{align*}
&\frac{\partial \bigg[ \frac{\sigma_k(\Lambda(\lambda))}{\sigma_l(\Lambda(\lambda))}\bigg]}{\partial \Lambda_{I_1}}\leq \cdots\leq \frac{\partial \bigg[ \frac{\sigma_k(\Lambda(\lambda))}{\sigma_l(\Lambda(\lambda))}\bigg]}{\partial \Lambda_{I_N}},\quad 0\leq l<k\leq N,
\end{align*}
and
\begin{align*}
&\frac{\partial \bigg[ \frac{\sigma_k(\Lambda(\lambda))}{\sigma_l(\Lambda(\lambda))}\bigg]}{\partial \lambda_{1}}\leq \cdots\leq \frac{\partial \bigg[ \frac{\sigma_k(\Lambda(\lambda))}{\sigma_l(\Lambda(\lambda))}\bigg]}{\partial \lambda_{n}},\quad 0\leq l<k\leq N.
\end{align*}
\item [(\romannumeral3)] If~~$0\leq l<k\leq N$, then
$\bigg[\frac{\sigma_k(\Lambda(\lambda))}{\sigma_l(\Lambda(\lambda))}\bigg]^{\frac{1}{k-l}}$ are concave with respect to $\lambda$ and
\begin{equation*}
\sum\limits_{i=1}^n\frac{\partial\left[\frac{\sigma_k(\Lambda(\lambda))}{\sigma_l(\Lambda(\lambda))}\right]^{\frac{1}{k-l}}}{\partial \lambda_i}\geq {C_\mathbf{p}}:=\mathbf{p}\left(\frac{C_N^k}{C_N^l}\right)^{\frac{1}{k-l}}>0.
\end{equation*}
\end{enumerate}
\end{proposition}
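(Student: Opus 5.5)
The statement is Proposition \ref{P4}. I will prove the three parts in order, working throughout with the composition $F(\lambda)=G(\Lambda(\lambda))$, where $G=\sigma_k/\sigma_l$ on $\tilde\Gamma_k\subset\mathbb{R}^N$. Two facts drive everything. First, $\Lambda$ is a linear map. Second, by the chain rule,
$$\frac{\partial F}{\partial\lambda_i}=\sum_{I\ni i}\frac{\partial G}{\partial\Lambda_I}.$$

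\textbf{Part (i).} This is elementary. The largest sum of $\mathbf{p}$ distinct entries of a vector ordered as $\lambda_1\ge\cdots\ge\lambda_n$ uses the top $\mathbf{p}$ entries. The smallest such sum uses the bottom $\mathbf{p}$ entries.

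\textbf{Part (ii), first chain.} I use the standard fact for $\sigma_k/\sigma_l$ on $\Gamma_k$: if $\mu_a\ge\mu_b$, then $\partial_{\mu_a}G\le\partial_{\mu_b}G$. To prove it, write $\sigma_m(\mu)=\sigma_m(\mu|ab)+(\mu_a+\mu_b)\sigma_{m-1}(\mu|ab)+\mu_a\mu_b\sigma_{m-2}(\mu|ab)$. Compute $\partial_{\mu_b}G-\partial_{\mu_a}G$ with the quotient rule. The numerator comes out to $(\mu_a-\mu_b)$ times
$$\sigma_{k-2}(\mu|ab)\sigma_l(\mu)-\sigma_k(\mu)\sigma_{l-2}(\mu|ab).$$
I then expand both $\sigma$'s in terms of the reduced vector $\mu|ab$ and apply Newton's inequality $\sigma_{k-1}\sigma_{l-1}\ge\sigma_k\sigma_{l-2}$ type relations on $\mu|ab$, which lies in $\Gamma_{k-2}$. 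This shows the bracket is $\ge0$. Equivalently, I can cite the known Hessian-quotient monotonicity, as in Chen--Zhang/Trudinger, applied in dimension $N$. This gives the chain in the $\Lambda_I$.

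\textbf{Part (ii), second chain.} Take $i<j$, so $\lambda_i\ge\lambda_j$, and pair up index sets. Every $I$ that contains $i$ but not $j$ is matched with $I'=(I\setminus\{i\})\cup\{j\}$. Then $\Lambda_I\ge\Lambda_{I'}$, so $\partial G/\partial\Lambda_I\le\partial G/\partial\Lambda_{I'}$ by the first chain. Sets containing both $i$ and $j$ appear in both sums and cancel. Summing over the pairs gives $\partial_{\lambda_i}F\le\partial_{\lambda_j}F$.

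\textbf{Part (iii).} Concavity of $G^{1/(k-l)}$ on $\tilde\Gamma_k$ is Proposition \ref{P1}(vii) in dimension $N$. Composing a concave function with the linear map $\Lambda$ preserves concavity. For the lower bound, each $\Lambda_I$ contains exactly $\mathbf{p}$ indices, so
$$\sum_i\partial_{\lambda_i}F^{1/(k-l)}=\mathbf{p}\sum_I\partial_{\Lambda_I}G^{1/(k-l)}.$$
This is $\ge\mathbf{p}(C_N^k/C_N^l)^{1/(k-l)}$ by Proposition \ref{P1}(vi) with $n$ replaced by $N$.

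\textbf{Main obstacle.} The only nontrivial step is the monotonicity in the first chain of (ii), specifically the sign of the quotient-rule numerator when $l\ge1$. If the direct Newton-inequality manipulation gets messy, I will instead use the identity
$$\partial_{\mu_b}\sigma_m-\partial_{\mu_a}\sigma_m=(\mu_a-\mu_b)\sigma_{m-2}(\mu|ab).$$
With it, the numerator reduces to $(\mu_a-\mu_b)$ times
$$\sigma_{k-2}(\mu|ab)\sigma_l(\mu)-\sigma_k(\mu)\sigma_{l-2}(\mu|ab).$$
I would then show the ratio $\sigma_k/\sigma_{k-2}(\cdot|ab)$ is dominated appropriately, using the generalized Newton--MacLaurin inequality from Proposition \ref{P1}(v).
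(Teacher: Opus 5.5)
\textbf{Verdict: there is a gap, but a fixable one.} The paper gives no argument of its own for this proposition; it defers to Lemmas 2.3--2.5 of \cite{ZJ}.

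\textbf{What is correct.} Your reduction of (i), the second chain of (ii), and (iii) to $N$-dimensional facts about $G=\sigma_k/\sigma_l$ works as stated:
\begin{itemize}
\item the chain rule $\partial_{\lambda_i}F=\sum_{I\ni i}\partial_{\Lambda_I}G$;
\item the bijection $I\mapsto I-i+j$ between index sets containing $i$ but not $j$ and those containing $j$ but not $i$, with $\Lambda_I-\Lambda_{I-i+j}=\lambda_i-\lambda_j\ge 0$;
\item the count $|I|=\mathbf{p}$;
\item Proposition \ref{P1}(vi),(vii) applied in $\mathbb{R}^N$, together with linearity of $\Lambda$.
\end{itemize}
If you simply cite the pairwise monotonicity ``$\mu_a\ge\mu_b\Rightarrow\partial_aG\le\partial_bG$ on $\tilde\Gamma_k$'' from the literature, the proof is complete and amounts to what the paper does.

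\textbf{Where it fails.} The self-contained route you sketch for that first chain does not close. Write $\tau=\mu|ab$ and $s=\mu_a+\mu_b$. The $\mu_a\mu_b$ terms cancel, and your bracket becomes
\[
\bigl[\sigma_{k-2}(\tau)\sigma_l(\tau)-\sigma_k(\tau)\sigma_{l-2}(\tau)\bigr]+s\bigl[\sigma_{k-2}(\tau)\sigma_{l-1}(\tau)-\sigma_{k-1}(\tau)\sigma_{l-2}(\tau)\bigr].
\]
Newton's inequalities on $\tau\in\Gamma_{k-2}$ do make the second bracket nonnegative. However, $s$ can be negative (take $a,b$ the two smallest entries). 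Worse, the first bracket can be negative for admissible $\mu$. For $k=3$, $l=2$ and $\mu=(11,10,3,-1,-1)\in\Gamma_3$, take $a,b$ to be the first two slots. Then $\tau=(3,-1,-1)$ and the first bracket is $\sigma_1(\tau)\sigma_2(\tau)-\sigma_3(\tau)=-8$.

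So nonnegativity of the sum genuinely uses $\mu\in\Gamma_k$ as a whole; it does not follow from termwise Newton inequalities on $\mu|ab$. Your fallback via Proposition \ref{P1}(v) does not help either. That inequality compares $\sigma$'s of one and the same vector, whereas you need to compare $\sigma_k(\mu)$ with $\sigma_{k-2}(\mu|ab)$.

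\textbf{How to fix it.} A short correct proof uses the concavity you already invoke in (iii). Let $\phi=G^{1/(k-l)}$, take $\mu\in\tilde\Gamma_k$ with $\mu_a>\mu_b$, and let $\mu'$ be $\mu$ with entries $a$ and $b$ swapped. Then $\mu'\in\tilde\Gamma_k$ and $\phi(\mu')=\phi(\mu)$, so concavity gives
\[
0=\phi(\mu')-\phi(\mu)\le\nabla\phi(\mu)\cdot(\mu'-\mu)=(\mu_a-\mu_b)\bigl(\partial_b\phi-\partial_a\phi\bigr)(\mu).
\]
Hence $\partial_a\phi\le\partial_b\phi$. Multiplying by the common positive factor $(k-l)\phi^{k-l-1}$ gives $\partial_aG\le\partial_bG$; when $\mu_a=\mu_b$ the two derivatives are equal by symmetry. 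This is the Schur-concavity of $\phi$. With it, your whole argument is self-contained given Proposition \ref{P1}.
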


\subsection{Definition through derivations on the exterior algebra}
Let $\mathbf{p} \in \{1, \cdots, n\}$ and $n\geqslant 2$. We use the standard notation for ordered multi-indices
$$ \mathfrak{J}(\mathbf{p},n):=\{I=(i_{1},\cdots,i_{\mathbf{p}})\mid i_{s}~integers~and~1\leqslant i_{1}<\cdots<i_{\mathbf{p}}\leqslant n\}. $$
For convenience, fix an order for the elements in $\mathfrak{J}(\mathbf{p},n)$:
$$I_1, I_2, \cdots, I_N, \quad \mbox{with}~N:=C_n^\mathbf{p}= \frac{n!}{\mathbf{p}!(n-\mathbf{p})!}.$$
Set $\mathfrak{J}(0,n)=\{0\}$ and $|I|=\mathbf{p}$ if $I\in \mathfrak{J}(\mathbf{p},n)$. For $I\in \mathfrak{J}(\mathbf{p},n)$,
\begin{enumerate}
\item[(\romannumeral1)]  $\overline{I}$ is the element in $\mathfrak{J}(n - \mathbf{p},n)$ which complements $I$ in $\{1,2,\cdots,n\}$ in the natural increasing order.
\item [(\romannumeral2)] $I-i$ means the multi-index of length $\mathbf{p}-1$ obtained by removing $i$ from $I$ for any $i \in I$.
\item [(\romannumeral3)] $I+j$ means the multi-index of length $\mathbf{p}+1$ obtained by adding $j$ to $I$  for any $j \notin I$.
\item [(\romannumeral4)] $\sigma(I,J)$ is the sign of the permutation which reorders $(I,J)$ in the natural increasing order for any
multi-index $J$ with $I\cap J=\emptyset$. In particular set $\sigma(\overline{0},0):=1$.
\end{enumerate}

\begin{definition}
For $\mathbf{p} \in \{1, \cdots, n\}$, the $(\mathbf{p}, k)$-cone is defined by
$$\mathcal{P}_{\mathbf{p}, k}=\{\lambda=(\lambda_1, \lambda_2, \cdots, \lambda_n)\in \mathbb{R}^n\mid \sigma_j(\Lambda) >0, \forall 1\leq j\leq k\},$$
where $\Lambda=(\Lambda_{I_1}, \Lambda_{I_2}, \cdots, \Lambda_{I_N}) \in \mathbb{R}^N$,  and
$$\Lambda_I=\lambda_{i_1}+\lambda_{i_2}+\cdots+\lambda_{i_\mathbf{p}}$$
for any $I=(i_1, i_2, \cdots, i_p)\in \mathfrak{J}(\mathbf{p},n)$.
\end{definition}
\begin{remark}
It is worth emphasizing that $\lambda \in \mathcal{P}_{\mathbf{p}, k}$ if and only if $\Lambda \in \tilde{\Gamma}_k$ in $\mathbb{R}^N$.
\end{remark}

Based on the above cone condition, we introduce the following  proposition.
\begin{proposition}
 A function $u$  is called a $(\Lambda, k)$-admissible  solution  if  and only if
 $\lambda(D^2u)(x)\in \mathcal{P}_{\mathbf{p}, k}$ for any $x \in \Omega$.
\end{proposition}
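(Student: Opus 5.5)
The proposition is essentially a restatement of the definitions, so the plan is to unwind them; there is no real obstacle. Fix $x\in\Omega$ and let $\lambda=\lambda(D^2u)(x)=(\lambda_1,\cdots,\lambda_n)$ be the eigenvalues of the symmetric matrix $D^2u(x)$.

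\emph{Step 1: diagonalize.} Choose an orthonormal eigenbasis $e_1,\cdots,e_n$ with $D^2u(x)e_i=\lambda_ie_i$. For $I=(i_1,\cdots,i_\mathbf{p})\in\mathfrak{J}(\mathbf{p},n)$ set $e_I=e_{i_1}\wedge\cdots\wedge e_{i_\mathbf{p}}$. The Leibniz-type definition of $\mathcal{D}_u$ gives
$$\mathcal{D}_u e_I=\sum_{j=1}^{\mathbf{p}} e_{i_1}\wedge\cdots\wedge(\lambda_{i_j}e_{i_j})\wedge\cdots\wedge e_{i_\mathbf{p}}=\Lambda_I(\lambda)\,e_I .$$
Since $\{e_I\}_{I\in\mathfrak{J}(\mathbf{p},n)}$ is a basis of $\Lambda^\mathbf{p}\mathbb{R}^n$ of cardinality $N$, the spectrum of $\mathcal{D}_u$ at $x$ is, with multiplicity, the vector $\Lambda(\lambda)=(\Lambda_{I_1}(\lambda),\cdots,\Lambda_{I_N}(\lambda))$. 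This confirms that $\Lambda(D^2u)$ as defined in the introduction coincides with $\Lambda(\lambda)$ in the sense of the $(\mathbf{p},k)$-cone definition.

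\emph{Step 2: independence of ordering.} The elementary symmetric functions $\sigma_j$ are symmetric, so neither the chosen order $I_1,\cdots,I_N$ nor the labeling of the eigenvalues affects $\sigma_j(\Lambda)$. Hence the condition $\sigma_j(\Lambda)>0$ for $1\le j\le k$ is well defined.

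\emph{Step 3: chain of equivalences.} By definition, $u$ is $(\Lambda,k)$-admissible if and only if $\Lambda(D^2u(x))\in\tilde{\Gamma}_k$ for every $x\in\Omega$. This means $\sigma_j(\Lambda(\lambda(D^2u)(x)))>0$ for all $1\le j\le k$ and all $x$. By the definition of $\mathcal{P}_{\mathbf{p},k}$ and the preceding Remark, this is exactly $\lambda(D^2u)(x)\in\mathcal{P}_{\mathbf{p},k}$ for every $x\in\Omega$. Both directions are therefore immediate.

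The only point needing any care is the eigenvalue computation in Step 1. It relies on $D^2u(x)$ being symmetric, so that it is orthogonally diagonalizable, and on $\mathcal{D}_u$ acting as a derivation on decomposable wedges.
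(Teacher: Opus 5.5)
Your proposal is correct and matches the paper. The paper gives no separate proof and treats the proposition as an immediate consequence of the definition of $(\Lambda,k)$-admissibility together with the preceding Remark that $\lambda\in\mathcal{P}_{\mathbf{p},k}$ if and only if $\Lambda\in\tilde{\Gamma}_k$. Your Step 1, checking that the $\Lambda_I$ are the eigenvalues of $\mathcal{D}_u$, is a harmless extra: the paper defines $\Lambda(D^2u)$ directly from the eigenvalues of $D^2u$, so the equivalence does not depend on it.
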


\begin{definition}
Let $\mathbf{p} \in \{1, \cdots, n\}$ and  $A=\{a_{ij}\}$ be a symmetric matrix.
The derivation induced by $A$ on $\Lambda^\mathbf{p}\mathbb{R}^n$ is defined as the linear map
\begin{equation*}
\begin{aligned}
\mathcal{D}_A:&~~~~~~~\Lambda^\mathbf{p}\mathbb{R}^n &&\longrightarrow \Lambda^\mathbf{p}\mathbb{R}^n \\
&v_1\wedge\cdots\wedge v_\mathbf{p} &&\longmapsto \sum_{i=1}^{\mathbf{p}} v_1 \wedge \cdots \wedge v_{i-1}\wedge (Av_i)\wedge v_{i+1}  \cdots\wedge v_\mathbf{p}.
\end{aligned}
\end{equation*}
\end{definition}

 Fix an orthonormal basis $(e_{1},\cdot\cdot\cdot,e_{n})$ of $\mathbb{R}^n$ and the corresponding basis $\{e_{I}\}_{I\in\mathfrak{J}(\mathbf{P},n)}$ of $\Lambda^\mathbf{P}\mathbb{R}^n$, where $e_I:=e_{i_1}\wedge\cdots\wedge e_{i_\mathbf{P}}$ for any $I=(i_1,\cdots i_\mathbf{P}) \in \mathfrak{J}(\mathbf{P},n)$. Obviously, the eigenvalues of    $\mathcal{D}_A$ can be written as
 $\Lambda_{I}=(\Lambda_{I_1}, \Lambda_{I_2}, \cdots, \Lambda_{I_N})$ if  $\lambda=(\lambda_1,\lambda_2,\cdots,\lambda_n)$ are eigenvalues of symmetric matrix $A$.

It is worth emphasizing that given any orthonormal basis of $\mathbb{R}^n$, $\mathcal{D}_A$ has a matrix representation with respect to the induced basis which has components being linear combinations of the entries of $A$.

\begin{proposition}\label{wij}
Let $A=\{a_{ij}\}$ be a symmetric matrix, the corresponding matrix $W:=\{ W_{IJ}\}_{I,J\in \mathfrak{J}(\mathbf{p},n)}$ of linear derivation $\mathcal{D}_A$
in the canonical basis $\{ e_{I_1}, e_{I_2}, \cdots, e_{I_N}\}$ reads
\begin{equation*}\label{propw}
W_{IJ}=
\begin{cases}
\sum_{i\in I}a_{ii}, ~~~&I=J,\\
\sigma(i,I-i)\sigma(j,J-j)a_{ij}, ~~~&I=i+K,J=j+K,|K|=\mathbf{p}-1,i\neq j,\\
0,~~~&\mbox{otherwise}.
\end{cases}
\end{equation*}
and hence we have
\begin{equation*}
\frac{\partial W_{IJ}}{\partial a_{ij}}=
\begin{cases}
1, ~~~&i=j,I=J,i\in I,\\
\sigma(i,I-i)\sigma(j,J-j), ~~~&I=i+K,J=j+K,|K|=\mathbf{p}-1,i\neq j,\\
0,~~~&\mbox{otherwise}.
\end{cases}
\end{equation*}
\end{proposition}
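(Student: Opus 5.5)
We compute $\mathcal{D}_A(e_J)$ for each basis element $e_J$ and read off its $e_I$-coefficient. By linearity of $\mathcal{D}_A$ in $A$ and multilinearity of the wedge product, this gives $W_{IJ}$ as an explicit linear function of the entries $a_{ij}$. The derivative formula then follows at once, because $W_{IJ}$ is linear in the $a_{ij}$.

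\emph{The action on basis vectors.} Write $J=(j_1,\dots,j_\mathbf{p})$ and $Ae_{j_s}=\sum_{m=1}^n a_{mj_s}e_m$. By definition,
\begin{equation*}
\mathcal{D}_A(e_J)=\sum_{s=1}^{\mathbf{p}}\sum_{m=1}^n a_{mj_s}\, e_{j_1}\wedge\cdots\wedge e_{j_{s-1}}\wedge e_m\wedge e_{j_{s+1}}\wedge\cdots\wedge e_{j_\mathbf{p}}.
\end{equation*}
We sort the terms into three groups.
\begin{enumerate}
\item[(a)] If $m\in J$ and $m\neq j_s$, the wedge contains a repeated vector, so the term vanishes.
\item[(b)] If $m=j_s$, the term is $a_{j_sj_s}e_J$. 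Summing over $s$ gives the diagonal entry $W_{JJ}=\sum_{i\in J}a_{ii}$.
\item[(c)] If $m\notin J$, set $j=j_s$, $i=m$ and $K=J-j$. The term is a multiple of $e_I$ with $I=i+K$, where $|K|=\mathbf{p}-1$ and $i\neq j$.
\end{enumerate}
It follows that $W_{IJ}=0$ whenever $I\neq J$ and $|I\cap J|\neq \mathbf{p}-1$. Moreover, when $I=i+K$ and $J=j+K$, the pair $(i,j)$ is uniquely determined by $(I,J)$: $i=I\setminus J$ and $j=J\setminus I$. Hence exactly one term contributes to each such entry.

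\emph{The sign.} We move $e_j$ to the front of $e_J$, which gives $e_J=\sigma(j,J-j)\,e_j\wedge e_K$ with $e_K$ in increasing order. In this form, replacing $e_j$ by $e_i$ in its slot yields $\sigma(j,J-j)\,e_i\wedge e_K$. Reordering $(i,K)$ into increasing order gives $e_i\wedge e_K=\sigma(i,I-i)\,e_I$.

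Therefore the contribution is $\sigma(i,I-i)\sigma(j,J-j)\,a_{ij}$, using the symmetry $a_{ij}=a_{ji}$. This proves the formula for $W$.

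\emph{The derivative formula.} Since each entry is a linear function of the $a_{ij}$, differentiation reads off the coefficients directly.
\begin{itemize}
\item[] On the diagonal, $\partial W_{II}/\partial a_{ii}=1$ exactly when $i\in I$.
\item[] For $I=i+K$, $J=j+K$ with $i\neq j$, the only entry appearing is $a_{ij}$, with the stated sign as coefficient.
\item[] All other derivatives are $0$.
\end{itemize}
The one subtlety is the convention for off-diagonal differentiation when $a_{ij}$ and $a_{ji}$ are treated as a single variable. Following the paper, we treat them as independent entries of $A$ with $W_{IJ}$ written in terms of $a_{ij}$.

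The only real obstacle is keeping the permutation signs consistent, in particular verifying that the slot replacement introduces no sign beyond $\sigma(j,J-j)\sigma(i,I-i)$. The rest is bookkeeping.
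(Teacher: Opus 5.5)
Your proposal is correct and follows essentially the paper's route. Both expand $\mathcal{D}_A$ on a basis vector, discard the terms with a repeated factor, and read off the diagonal part $\sum_{i\in J}a_{ii}$ and the single off-diagonal term; your slot-replacement argument makes explicit the sign $\sigma(i,I-i)\sigma(j,J-j)$ that the paper leaves to ``direct calculations.'' One small remark: because you write $Ae_{j_s}=\sum_m a_{mj_s}e_m$ and read off the $e_I$-coefficient of $\mathcal{D}_A(e_J)$, the coefficient is already $a_{ij}$, so you do not need to invoke $a_{ij}=a_{ji}$ (the paper uses the row convention $a_{i_sm}$, where symmetry is what makes the two readings agree).
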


\begin{proof}
For any $ I\in \mathfrak{J}$ with $I=(i_1,i_2,\cdots,i_{\mathbf{p}})$, we have
\begin{align*}
\mathcal{D}_A(e_I)=&\sum _{i_s\in I}e_{i_1}\wedge e_{i_2}\wedge\cdots\wedge e_{i_{s-1}}\wedge Ae_{i_s}\wedge e_{i_{s+1}}\wedge\cdots\wedge e_{i_\mathbf{p}}\\
=&\sum _{i_s\in I}e_{i_1}\wedge e_{i_2}\wedge\cdots\wedge e_{i_{s-1}}\wedge \sum_{m=1}^na_{i_sm}e_m \wedge e_{i_{s+1}}\wedge\cdots\wedge e_{i_\mathbf{p}}\\
=&\sum _{i_s\in I}e_{i_1}\wedge e_{i_2}\wedge\cdots\wedge e_{i_{s-1}}\wedge \Bigg(\sum_{m\notin\{i_1,i_2,\cdots,i_{s-1},i_{s+1},\cdots,i_\mathbf{p}\}}a_{i_sm}e_m \Bigg)\wedge e_{i_{s+1}}\wedge\cdots\wedge e_{i_\mathbf{p}}\\
=&\sum _{i_s\in I}e_{i_1}\wedge e_{i_2}\wedge\cdots\wedge e_{i_{s-1}}\wedge (a_{i_si_s}e_{i_s}+\sum_{m\notin I}a_{i_sm}e_m) \wedge e_{i_{s+1}}\wedge\cdots\wedge e_{i_\mathbf{p}}\\
=&\sum _{i_s\in I}a_{i_si_s}e_{I}+\sum _{i_s\in I}\sum _{m\notin I}a_{i_sm}e_{i_1}\wedge e_{i_2}\wedge\cdots\wedge e_{i_{s-1}}\wedge e_{m}\wedge e_{i_{s+1}}\wedge\cdots\wedge e_{i_\mathbf{p}}\\
=&\sum _{i\in I}a_{ii}e_{I}+\sum _{i\in I}\sum _{j\notin I}a_{ij}\sigma (i,I-i)\sigma(j,I-i)e_{I-i+j}.
\end{align*}
Then the formulas follow from direct calculations.
\end{proof}

\begin{proposition}
Suppose $A=\{a_{ij}\}$ is diagonal,  $W:=\{ W_{IJ}\}_{I,J\in \mathfrak{J}(\mathbf{p},n)}$ is the matrix of the linear derivation $\mathcal{D}_A$ in the canonical basis $\{ e_{I_1}, e_{I_2}, \cdots, e_{I_N}\}$ and $m$ is a positive integer, then
\begin{equation*}
\frac{\partial \sigma_m(W) }{\partial a_{ij}}=
\begin{cases}
\sum_{\{I|i\in I\}}\sigma_{m-1}(W | I),    \quad  & i=j,\\
0, \quad & \mbox{otherwise}.
\end{cases}
\end{equation*}
\end{proposition}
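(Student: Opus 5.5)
We compute the derivative with the chain rule through the entries of $W$. We treat $\sigma_m(W)$ as the $m$-th elementary symmetric function of the eigenvalues of the symmetric $N\times N$ matrix $W$, i.e. the sum of its principal $m\times m$ minors. The standard formula for its derivative at a diagonal matrix is
\[
\frac{\partial \sigma_m(W)}{\partial W_{IJ}}=
\begin{cases}
\sigma_{m-1}(W|I), & I=J,\\
0, & I\neq J,
\end{cases}
\]
valid whenever $W$ is diagonal. For $I=J$ this holds because the principal minors containing the index $I$ contribute exactly $\sigma_{m-1}$ of the diagonal entries with $I$ removed. For $I\neq J$, each principal minor containing both $I$ and $J$ is a polynomial in which $W_{IJ}$ appears only through terms that also contain another off-diagonal factor; all such terms vanish at a diagonal matrix. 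The first step is therefore to verify that $W$ is diagonal when $A$ is. This is immediate from Proposition \ref{wij}: off-diagonal entries of $W$ are multiples of off-diagonal $a_{ij}$, which are zero. The diagonal entries are then $W_{II}=\sum_{i\in I}a_{ii}=\Lambda_I$.

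Next we apply the chain rule,
\[
\frac{\partial \sigma_m(W)}{\partial a_{ij}}=\sum_{I,J}\frac{\partial \sigma_m(W)}{\partial W_{IJ}}\frac{\partial W_{IJ}}{\partial a_{ij}}=\sum_{I}\sigma_{m-1}(W|I)\frac{\partial W_{II}}{\partial a_{ij}}.
\]
By the second formula of Proposition \ref{wij}, $\partial W_{II}/\partial a_{ij}$ equals $1$ if $i=j\in I$ and vanishes otherwise. Indeed, when $i\neq j$ the only nonzero derivatives occur at off-diagonal positions $I=i+K$, $J=j+K$ with $I\neq J$. Hence for $i=j$ the sum reduces to $\sum_{\{I\mid i\in I\}}\sigma_{m-1}(W|I)$, and for $i\neq j$ it is $0$, as claimed.

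The only point needing care is the symmetric parametrization. Both $a_{ij}$ and $a_{ji}$ feed into $W_{IJ}$ and $W_{JI}$, but this does not affect the conclusion, since every relevant $\partial\sigma_m/\partial W_{IJ}$ with $I\neq J$ vanishes at the diagonal point. I would also state the formula $\partial\sigma_m/\partial W_{IJ}=\delta_{IJ}\sigma_{m-1}(W|I)$ at diagonal $W$ explicitly, citing the minors expansion, rather than treat it as obvious. The argument is otherwise routine.
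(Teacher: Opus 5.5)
Your proof is correct. The paper states this proposition without a written proof, as an immediate consequence of Proposition \ref{wij}, and your chain-rule argument is exactly the computation that fills it in: $W$ is diagonal, $\partial\sigma_m/\partial W_{IJ}$ vanishes for $I\neq J$ at a diagonal matrix, and $\partial W_{II}/\partial a_{ij}=1$ precisely when $i=j\in I$. Your handling of the symmetric parametrization of $a_{ij}$ and $a_{ji}$ is also right.
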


\subsection{Key lemmas}

For convenience, we introduce the following notations:
 $$A = \{ I \in \mathfrak{J}(\mathbf{p},n) | 1 \in I \},\quad B = \{ I \in \mathfrak{J}(\mathbf{p},n) | 1 \notin I \}.$$
Motivated by the work of Chen-Zhang~\cite{CZ}, we establish the following fundamental inequalities and properties for the operator $\frac{\sigma_k(\Lambda(\lambda))}{\sigma_l(\Lambda(\lambda))}$, which are essential to the subsequent derivation of the a priori estimates.

\begin{lemma}\label{f11}
Let $1\leq \mathbf{p} < n$, $2\leq k\leq N$, $\lambda=(\lambda_1,\lambda_2,\cdots,\lambda_n)\in \mathcal{P}_{\mathbf{p}, k}$, and $\lambda_1<0$. Then
\begin{eqnarray}\label{xj-1}
\frac{\partial \left[\frac{\sigma_k(\Lambda(\lambda))}{\sigma_l(\Lambda(\lambda))}\right]}{\partial \lambda_1} \geq C_1 \sum_i\frac{\partial \left[\frac{\sigma_k(\Lambda(\lambda))}{\sigma_l(\Lambda(\lambda))}\right]}{\partial \lambda_i}, \quad \forall~~~0\leq l <k \leq N,
\end{eqnarray}
where $C_1$ is a constant depending only on $n, k, l, \mathbf{p}$.
\begin{proof}
For $\mathbf{p}=1$, \eqref{xj-1} holds by Lemma 2.5 in \cite{CZ}.
In the following, we assume $\mathbf{p}>1$ and divide into two cases to prove \eqref{xj-1}.

\textbf{Case 1: } There exists $ I' \in A$  such that $\Lambda_{I'} < 0$.

In this case, we can get directly from Lemma 2.5 in \cite{CZ},
\begin{align*}
\frac{\partial \left[\frac{\sigma_k(\Lambda(\lambda))}{\sigma_l(\Lambda(\lambda))}\right]}{\partial \lambda_1} =\sum_{I\in A} \frac{\partial \left[\frac{\sigma_k(\Lambda(\lambda))}{\sigma_l(\Lambda(\lambda))}\right]}{\partial \Lambda_I}
\geq& \frac{\partial \left[\frac{\sigma_k(\Lambda(\lambda))}{\sigma_l(\Lambda(\lambda))}\right]}{\partial \Lambda_{I'}}\\
\geq& \frac{N(k-l)}{k(N-l)(N-k+1)}  \sum_{I}\frac{\partial \left[\frac{\sigma_k(\Lambda(\lambda))}{\sigma_l(\Lambda(\lambda))}\right]}{\partial \Lambda_I} \\
=& \frac{N(k-l)}{k(N-l)(N-k+1)} \frac{1}{\mathbf{p}} \sum_{i}\frac{\partial \left[\frac{\sigma_k(\Lambda(\lambda))}{\sigma_l(\Lambda(\lambda))}\right]}{\partial \lambda_i},
\end{align*}
where $N=C_n^\mathbf{p}$.

\textbf{Case 2: } $\Lambda_{I} \geq 0$ for all $I \in A$.

In this case, it suffices to prove that
$$\sum_{I \in A} \frac{\partial \left[\frac{\sigma_k(\Lambda(\lambda))}{\sigma_l(\Lambda(\lambda))}\right]}{\partial \Lambda_I}  \geq \frac{1}{C_{n-1}^{\mathbf{p}}+1} \sum_{I}\frac{\partial \left[\frac{\sigma_k(\Lambda(\lambda))}{\sigma_l(\Lambda(\lambda))}\right]}{\partial \Lambda_I},$$
the above inequality is equivalent to
\begin{equation}\label{xj-2}
\sum_{I \in A}\frac{\partial \left[\frac{\sigma_k(\Lambda(\lambda))}{\sigma_l(\Lambda(\lambda))}\right]}{\partial \Lambda_I}  \geq \frac{1}{C_{n-1}^{\mathbf{p}}} \sum_{I \in B} \frac{\partial \left[\frac{\sigma_k(\Lambda(\lambda))}{\sigma_l(\Lambda(\lambda))}\right]}{\partial \Lambda_I} .
\end{equation}
For any $\tilde{I}=(\tilde{i}_2, \tilde{i}_3, \cdots, \tilde{i}_\mathbf{p}) \in C := \{ \tilde{I} \in \mathfrak{J}(\mathbf{p}-1,n) | 1 \notin \tilde{I} \}$, we observe that
$$\Lambda_{1 + \tilde{I}} = \lambda_1 + \lambda_{i_2} + \cdots + \lambda_{i_\mathbf{p}} \geq 0,$$
since $1 + \tilde{I} \in A$. Then
$$ \lambda_{\tilde{i}_2} + \cdots + \lambda_{\tilde{i}_\mathbf{p}} \geq -\lambda_1 > 0.$$
It implies that there exists $\tilde{i}_s \in \tilde{I}$ such that $\lambda_{\tilde{i}_s} > 0$. Hence, for any $I \in B$, there exists $ i_s \in I$ with $\lambda_{i_s} > 0$.
Fix $ I_B \in B$, without loss of generality, assume $\lambda_{i_1} > 0$, then we obtain
$$\Lambda_{I_B} = \lambda_{i_1} + \lambda_{i_2} + \cdots + \lambda_{i_\mathbf{p}}>\Lambda_{\bar{I}}:=\lambda_{1} + \lambda_{i_2} + \cdots + \lambda_{i_\mathbf{p}},$$
where $\bar{I} = (1, i_2, ..., i_\mathbf{p}) \in A$.
In view of Proposition \ref{P4} (\romannumeral1),
$$\sum_{I \in A} \frac{\partial \left[\frac{\sigma_k(\Lambda(\lambda))}{\sigma_l(\Lambda(\lambda))}\right]}{\partial \Lambda_I}  \geq\frac{\partial \left[\frac{\sigma_k(\Lambda(\lambda))}{\sigma_l(\Lambda(\lambda))}\right]}{\partial \Lambda_{\bar{I}}}  \geq \frac{\partial \left[\frac{\sigma_k(\Lambda(\lambda))}{\sigma_l(\Lambda(\lambda))}\right]}{\partial \Lambda_{I_B}} .$$
Summing over all $I_B\in B$, and using the fact that $|B|=C_{n-1}^\mathbf{p}$, we obtain
$$ C_{n-1}^\mathbf{p}\sum_{I \in A} \frac{\partial \left[\frac{\sigma_k(\Lambda(\lambda))}{\sigma_l(\Lambda(\lambda))}\right]}{\partial \Lambda_I}  \geq \sum_{I \in B} \frac{\partial \left[\frac{\sigma_k(\Lambda(\lambda))}{\sigma_l(\Lambda(\lambda))}\right]}{\partial \Lambda_I} .$$
So \eqref{xj-2} holds and the proof is complete.
\end{proof}
\end{lemma}


\begin{lemma}\label{l2}
Let $1\leq \mathbf{p} <n$, $2\leq k\leq N$, $\lambda=(\lambda_1,\lambda_2,\cdots,\lambda_n)\in \mathcal{P}_{\mathbf{p}, k}$. Suppose that $\lambda_2 \geq \lambda_3\geq \cdots \geq \lambda_n$, $\lambda_1>0$, $\lambda_n<0$, $\lambda_1 \geq \delta \lambda_2$ and $-\lambda_n\geq \epsilon \lambda_1$ for small positive constants $\delta$ and $\epsilon$. Then
\begin{equation}\label{xj-11}
\frac{\partial \left[\frac{\sigma_k(\Lambda(\lambda))}{\sigma_l(\Lambda(\lambda))}\right]}{\partial \lambda_1} \geq C_2 \sum_i\frac{\partial \left[\frac{\sigma_k(\Lambda(\lambda))}{\sigma_l(\Lambda(\lambda))}\right]}{\partial \lambda_i}, \quad \forall~~ 0\leq l <k \leq N,
\end{equation}
where $C_2$ is  a constant depending only on $n, k, l, \mathbf{p}, \delta$ and $\epsilon$.
\end{lemma}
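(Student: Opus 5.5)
Write $F=\sigma_k(\Lambda(\lambda))/\sigma_l(\Lambda(\lambda))$ and $F^I=\partial F/\partial\Lambda_I$. Then $\partial F/\partial\lambda_i=\sum_{I\ni i}F^I$ and $\sum_i\partial F/\partial\lambda_i=\mathbf{p}\sum_I F^I$. So it suffices to show
\begin{equation*}
\sum_{I\in A}F^I\geq c\sum_{I}F^I .
\end{equation*}
By Proposition \ref{P4}(ii) the $F^I$ are monotone in the opposite order to the $\Lambda_I$: a smaller $\Lambda_I$ gives a larger $F^I$. The plan is to pair each $I\in B$ with an element of $A$ whose $\Lambda$-value is not much larger, and then to compare the corresponding $F^I$ up to a controlled factor. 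As in Lemma \ref{f11}, the case $\mathbf{p}=1$ should reduce to the known estimate of Chen-Zhang \cite{CZ} (Lemma 2.6 type). I concentrate on $\mathbf{p}>1$.

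\textbf{Case 1: $\lambda_1\geq\lambda_2$.} Here $\lambda_1$ is the largest eigenvalue. For $I\in B$, replace the largest-index element (the one with the smallest $\lambda$) by $1$ to get $\bar I\in A$ with $\Lambda_{\bar I}\geq\Lambda_I$. This inequality points the wrong way for the monotonicity argument, so I use the smallest $\Lambda$ instead. The index $J_0=(n-\mathbf{p}+1,\dots,n)$ realizes $\Lambda_{I_N}$. If $\mathbf{p}\le n-1$, I swap its largest-$\lambda$ entry for $1$ to get $J_1\in A$ with
\[
\Lambda_{J_1}=\Lambda_{J_0}+\lambda_1-\lambda_{n-\mathbf{p}+1}.
\]
This is not directly minimal. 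So the better route is the standard quantitative one: show that $F^{J}$ for some $J\in A$ is comparable to $\max_I F^I=F^{I_N}$.

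The tool is the identity $F^I=\dfrac{\sigma_{k-1}(\Lambda|I)\sigma_l-\sigma_k\sigma_{l-1}(\Lambda|I)}{\sigma_l^2}$. Together with the comparison
\[
F^I-F^J=(\Lambda_J-\Lambda_I)\,\partial^2(\cdot)\text{-type terms}, \qquad \sigma_{m}(\Lambda|I)-\sigma_m(\Lambda|J)=(\Lambda_J-\Lambda_I)\sigma_{m-1}(\Lambda|IJ),
\]
this shows that two indices whose $\Lambda$-values differ by a bounded multiple of $|\lambda_n|$ have comparable $F^I$. I apply this with $J\in A$ containing $1$ and $n$, namely $J=(1,n-\mathbf{p}+2,\dots,n)$. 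Then
\[
\Lambda_J-\Lambda_{I_N}=\lambda_1-\lambda_{n-\mathbf{p}+1}\le \lambda_1+|\lambda_n|\le(1+\epsilon^{-1})|\lambda_n| .
\]

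\textbf{Case 2: $\lambda_1<\lambda_2$.} Here $\lambda_1\geq\delta\lambda_2$, so $\lambda_1$ is comparable to the top eigenvalue. The same swap costs at most $\lambda_2+|\lambda_n|\le(\delta^{-1}\epsilon^{-1}+1)|\lambda_n|$.

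In both cases the remaining task is to show $F^J\geq c(\delta,\epsilon)F^{I_N}$ whenever $\Lambda_J-\Lambda_{I_N}\le M|\lambda_n|$. Using
\[
\sigma_{k-1}(\Lambda|J)=\sigma_{k-1}(\Lambda|I_N)-(\Lambda_J-\Lambda_{I_N})\sigma_{k-2}(\Lambda|I_N J),
\]
this reduces to proving $|\lambda_n|\,\sigma_{k-2}(\Lambda|I_NJ)\le C\sigma_{k-1}(\Lambda|I_N)$, together with the analogous bound for the quotient. Since $\Lambda\in\tilde\Gamma_k$ and $\lambda_n<0$, the negative entries are controlled by the positive ones. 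I would try to bound $|\lambda_n|$ by a multiple of the largest $\Lambda$-entries not in $\{I_N,J\}$ and then use Proposition \ref{P1}(iv) on $\tilde\Gamma_{k-1}$.

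This comparison step is the main obstacle. If it fails in general, the fallback mirrors Chen-Zhang's proof: argue by contradiction and normalization. Scale so that $\lambda_1=1$. Suppose a sequence makes the ratio tend to $0$. Pass to a limit, where $\lambda_2\le\delta^{-1}$ and $\lambda_n\le-\epsilon$ keep things nondegenerate; the only possible blow-up of the $\lambda_i$ is controlled by the constraints together with the $\tilde\Gamma_k$ condition. Then derive a contradiction with Proposition \ref{P4}(iii) and concavity, using the homogeneity of $F^I$.
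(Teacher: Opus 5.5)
\textbf{There is a genuine gap, and the step that carries the proof is false.} Your reduction to $\sum_{I\in A}F^I\ge c\sum_I F^I$ agrees with the paper. The proposal then stops where the work begins: the comparison $F^J\ge c(\delta,\epsilon)F^{I_N}$ for $J=(1,n-\mathbf{p}+2,\dots,n)$ is never proved.

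Your sketched reduction could not prove it anyway. From $\sigma_{k-1}(\Lambda|J)=\sigma_{k-1}(\Lambda|I_N)-(\Lambda_J-\Lambda_{I_N})\sigma_{k-2}(\Lambda|I_NJ)$, you need the subtracted term to be at most $(1-c)\sigma_{k-1}(\Lambda|I_N)$. A bound $|\lambda_n|\sigma_{k-2}(\Lambda|I_NJ)\le C\sigma_{k-1}(\Lambda|I_N)$ with an unspecified $C$ gives no such control.

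More seriously, the comparison fails. Take $n=3$, $\mathbf{p}=2$, $k=N=3$, $l=0$, and $\lambda=(1,\tfrac12+\eta,-\tfrac12)$ with $\eta\downarrow 0$.
\begin{itemize}
\item Then $(\Lambda_{(1,2)},\Lambda_{(1,3)},\Lambda_{(2,3)})=(\tfrac32+\eta,\tfrac12,\eta)\in\tilde\Gamma_3$, and all hypotheses hold for any $\delta\le 1$ and $\epsilon\le\tfrac12$.
\item Here $J=(1,3)$ and $I_N=(2,3)$. We get $F^J=\Lambda_{(1,2)}\Lambda_{(2,3)}\to 0$, while $F^{I_N}=\Lambda_{(1,2)}\Lambda_{(1,3)}\to\tfrac34$.
\item This happens even though your sufficient inequality holds with $C=1$.
\end{itemize}
In fact
\[
\frac{\partial F}{\partial\lambda_1}=\eta(2+\eta)\to 0,\qquad \sum_i\frac{\partial F}{\partial\lambda_i}\to\frac32 ,
\]
so the lemma itself fails here. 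The same holds for every $0\le l<k=N$: as $\Lambda_{I_N}\to 0^+$ with the other entries bounded below, every $F^I$ with $I\neq I_N$ tends to $0$, while $F^{I_N}$ stays bounded away from $0$.

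Your compactness fallback breaks at the same point. The limit configuration has $\Lambda_{(2,3)}=0$, so it lies on $\partial\mathcal P_{\mathbf{p},k}$, where the ratio really does degenerate. Neither Proposition \ref{P4}(iii) nor concavity gives a contradiction there.

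\textbf{Comparison with the paper.} The paper splits according to whether $\Lambda_{I_N}\ge-\frac{\epsilon}{2}\lambda_1$.
\begin{itemize}
\item In that case (Subcase 1.2) it compares exactly your pair, $I'=J$ against $I_N$. It does so via $\Lambda_{I_N}\ge\frac{\epsilon}{2(\mathbf{p}-1)}\Lambda_{I'}$. This does not follow from $\lambda_{n-\mathbf{p}+1}\ge\frac{\epsilon}{2(\mathbf{p}-1)}\lambda_1$, since it would need $\lambda_{n-\mathbf{p}+2}+\dots+\lambda_n\ge 0$. It fails in the example above once $\eta<\epsilon/4$, so that step of the paper has the same defect as yours.
\item In the complementary regime $\Lambda_{I_N}<-\frac{\epsilon}{2}\lambda_1$, the paper uses an idea absent from your outline. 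A strongly negative $\Lambda_{I_N}$ forces $\sigma_{m+1}(\Lambda|I_1)$ to be quantitatively negative, through
\[
(m+1)\sigma_{m+1}(\Lambda|I_1)=\sum_{s\ge2}\Lambda_{I_s}\sigma_m(\Lambda|I_1)-\sum_{s\ge2}\Lambda_{I_s}^2\sigma_{m-1}(\Lambda|I_1I_s).
\]
This yields $\sigma_m(\Lambda|I_1)\ge C_0\sigma_m(\Lambda)$ for $1\le m\le k-1$. Newton--Maclaurin then gives $F^{I_1}\ge c\,\sigma_{k-1}(\Lambda)/\sigma_l(\Lambda)\ge c'\sum_I F^I$. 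That part of the argument goes through.
\end{itemize}

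The configuration where some $\Lambda_I$ with $1\notin I$ is positive but small compared with $\lambda_1$ is a real obstruction, not a technicality. No completion of your plan, or of the paper's Case 1, can prove the lemma as stated. A correct version needs an extra hypothesis that rules out this degeneration.
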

\begin{proof}
For $\mathbf{p}=1$, \eqref{xj-11} holds by Lemma 2.7 in \cite{CZ}.
In the following, we assume $\mathbf{p}>1$.  For convenience, fix an order  $I_1, I_2, \cdots, I_N$  for the elements in $\mathfrak{J}(\mathbf{p}, n)$ with
$$I_1=(1,2,\cdots,\mathbf{p}),~I_2=(2,\cdots,\mathbf{p},\mathbf{p}+1),~\cdots,~ I_N=(n-\mathbf{p}+1,\cdots,n).$$
The proof splits into two cases.

\textbf{Case 1: } $\Lambda_{I_N} \geq  -\frac{\epsilon}{2} \lambda_1$. Set $I'=(1,n-\mathbf{p}+2,\cdots,n)\in A$.

\textbf{Subcase 1.1: } $\lambda_1<\lambda_{n-\mathbf{p}+1}$.

In this case,  $\Lambda_{I'}\leq \Lambda_I$ for any $I \in \mathfrak{J}(\mathbf{p},n)$. By Proposition \ref{P4} (\romannumeral1),
$$\frac{\partial \left[\frac{\sigma_k(\Lambda(\lambda))}{\sigma_l(\Lambda(\lambda))}\right]}{\partial \lambda_1}=\sum_{I\in A}\frac{\partial \left[\frac{\sigma_k(\Lambda(\lambda))}{\sigma_l(\Lambda(\lambda))}\right]}{\partial \Lambda_I} \geq \frac{\partial \left[\frac{\sigma_k(\Lambda(\lambda))}{\sigma_l(\Lambda(\lambda))}\right]}{\partial \Lambda_{I'}} \geq\frac{1}{C_n^\mathbf{p}}\sum_I\frac{\partial \left[\frac{\sigma_k(\Lambda(\lambda))}{\sigma_l(\Lambda(\lambda))}\right]}{\partial \Lambda_I} =\frac{1}{\mathbf{p}C_n^\mathbf{p}}\sum_i\frac{\partial \left[\frac{\sigma_k(\Lambda(\lambda))}{\sigma_l(\Lambda(\lambda))}\right]}{\partial \lambda_i}.$$

\textbf{Subcase 1.2: } $\lambda_1\geq\lambda_{n-\mathbf{p}+1}$.

First, it is easy to see that $\Lambda_{I_N}\leq \Lambda_I$ for any $I \in \mathfrak{J}(\mathbf{p},n)$. It implies $\Lambda | I_N \in \tilde{\Gamma}_k$.
Moreover, using $\Lambda_{I_N} \geq  -\frac{\epsilon}{2} \lambda_1$ and $-\lambda_n\geq \epsilon \lambda_1$, we obtain
$$\lambda_{n-\mathbf{p}+1}+ \lambda_{n-\mathbf{p}+2}+\cdots+\lambda_{n-1}\geq -\frac{\epsilon}{2} \lambda_1-\lambda_n\geq \frac{\epsilon}{2}\lambda_1,$$
 that means $\lambda_{n-\mathbf{p}+1}\geq \frac{\epsilon}{2(\mathbf{p}-1)}\lambda_1$. Further calculation yields
$$\Lambda_{I_N} \geq \frac{\epsilon}{2(\mathbf{p}-1)} \left(\lambda_1+\lambda_{n-\mathbf{p}+2}+\cdots+\lambda_n \right)=\frac{\epsilon}{2(\mathbf{p}-1)}\Lambda_{I'}.$$
For any $1\leq m \leq k-1$, we derive
\begin{equation*}
\begin{aligned}
\sigma_{m}(\Lambda |I_N)= &~\Lambda_{I'}\sigma_{m-1}(\Lambda |I'I_N )+\sigma_{m}(\Lambda | I' I_N)\\
\leq &~\frac{ 2(\mathbf{p}-1)}{\epsilon}\bigg[\Lambda_{I_N}\sigma_{m-1}(\Lambda |I'I_N )+\sigma_{m}(\Lambda | I' I_N)\bigg]\\
=&~\frac{ 2(\mathbf{p}-1)}{\epsilon}\sigma_{m}(\Lambda |I').
\end{aligned}
\end{equation*}
Then for any $I\in B$, by Proposition \ref{P4} (\romannumeral1) and $\Lambda | I_N \in \tilde{\Gamma}_k$,
\begin{align*}
\frac{\partial \left[\frac{\sigma_k(\Lambda(\lambda))}{\sigma_l(\Lambda(\lambda))}\right]}{\partial \Lambda_I}
&\leq \frac{\partial \left[\frac{\sigma_k(\Lambda(\lambda))}{\sigma_l(\Lambda(\lambda))}\right]}{\partial \Lambda_{I_N}} \\
&= \frac{\sigma_{k-1}(\Lambda |I_N)\sigma_{l}(\Lambda |I_N)-\sigma_{k}(\Lambda |I_N)\sigma_{l-1}(\Lambda |I_N)}{\sigma^2_l(\Lambda(\lambda))}  \\
&\leq \frac{\sigma_{k-1}(\Lambda |I_N)\sigma_{l}(\Lambda |I_N)}{\sigma^2_l(\Lambda(\lambda))}\\
&\leq \frac{ 4(\mathbf{p}-1)^2}{\epsilon^2} \frac{\sigma_{k-1}(\Lambda |I')\sigma_{l}(\Lambda |I')}{\sigma^2_l(\Lambda(\lambda))}\\
&\leq \frac{ 4(\mathbf{p}-1)^2}{\epsilon^2}  \left(1-\frac{l(N-k)}{k(N-l)}\right)^{-1} \frac{\sigma_{k-1}(\Lambda |I')\sigma_{l}(\Lambda |I')-\sigma_{k}(\Lambda |I')\sigma_{l-1}(\Lambda |I')}{\sigma^2_l(\Lambda(\lambda))} \\
&=\frac{ 4(\mathbf{p}-1)^2}{\epsilon^2}  \left(1-\frac{l(N-k)}{k(N-l)}\right)^{-1}\frac{\partial \left[\frac{\sigma_k(\Lambda(\lambda))}{\sigma_l(\Lambda(\lambda))}\right]}{\partial \Lambda_{I'}} .
\end{align*}
It follows that
$$\sum_{I\in A} \frac{\partial \left[\frac{\sigma_k(\Lambda(\lambda))}{\sigma_l(\Lambda(\lambda))}\right]}{\partial \Lambda_I}  \geq \frac{\epsilon^2}{ 4(\mathbf{p}-1)^2 C_{n-1}^{\mathbf{p}}} \left(1-\frac{l(N-k)}{k(N-l)}\right) \sum_{I\in B} \frac{\partial \left[\frac{\sigma_k(\Lambda(\lambda))}{\sigma_l(\Lambda(\lambda))}\right]}{\partial \Lambda_I} ,$$
and \eqref{xj-11} is proved.

\

\textbf{Case 2: } $\Lambda_{I_N} <  -\frac{\epsilon}{2} \lambda_1$.

From condition $\lambda_1 \geq \delta \lambda_2$, it follows that
\begin{equation*}\label{iuqourd-4}
\begin{aligned}
 \Lambda_{I_1} \leq (1+(\mathbf{p}-1)\delta^{-1})\lambda_1,
\quad \Lambda_{I_2}\leq \frac{\mathbf{p}}{\mathbf{p}-1+\delta}\Lambda_{I_1}.
\end{aligned}
\end{equation*}
Then for  $1\leq m \leq k-1$,
\begin{equation}\label{iuqourd-3}
\begin{aligned}
(-\Lambda_{I_N}) \sigma_{m-1} (\Lambda | I_1 I_N) &\geq \frac{\epsilon}{2}\frac{1}{1+(\mathbf{p}-1)\delta^{-1}} \Lambda_{I_1} \sigma_{m-1} (\Lambda | I_1 I_N).
\end{aligned}
\end{equation}

If there exists $s \in \{1, 2, \cdots, \mathbf{p} \}$ such that $\lambda_1\leq \lambda_s$ and $\lambda_1> \lambda_{s+1}$. Note that $\max \{\Lambda_I \mid I\in \mathfrak{J}(\mathbf{p}, n)\}=\Lambda_{I_1}$ with $I_1=(1, 2, \cdots,  \mathbf{p})$,
then from Proposition \ref{P1} (\romannumeral4),
\begin{equation}\label{iuqourd-1}
\Lambda_{I_1} \sigma_{m-1} (\Lambda |I_1 I_N) \geq \frac{m}{N-1}\sigma_{m}(\Lambda | I_N), \quad ~\forall~1\leq m \leq k-1.
\end{equation}

If $\lambda_{\mathbf{p}+1} \geq \lambda_1$, then $\max \{\Lambda_I \mid I\in \mathfrak{J}(\mathbf{p}, n)\}=\Lambda_{I_2}$. Moreover, for  $1\leq m \leq k-1$,
\begin{equation}\label{iuqourd-2}
\Lambda_{I_1} \sigma_{m-1} (\Lambda |I_1 I_N) \geq  \frac{\mathbf{p}-1+\delta}{\mathbf{p}} \Lambda_{I_2}   \sigma_{m-1}(\Lambda |I_2 I_N)\geq  \frac{(\mathbf{p}-1+\delta)m}{\mathbf{p}(N-1)}\sigma_{m}(\Lambda | I_N).
\end{equation}
Hence, from \eqref{iuqourd-3} to \eqref{iuqourd-2} and $\Lambda_{I_N} < 0$, it holds
\begin{align}
(-\Lambda_{I_N}) \sigma_{m-1} (\Lambda | I_1 I_N)
\geq \frac{\epsilon \delta m}{2(N-1)\mathbf{p}}\sigma_{m}(\Lambda | I_N)
\geq \theta \sigma_{m}(\Lambda)\label{iuqourd-33},
\end{align}
where $\theta=\frac{\epsilon \delta}{2(N-1)\mathbf{p}}$.

We $\mathbf{claim}$ that
\begin{equation}\label{i2ir34}
\sigma_{m} (\Lambda | I_1) \geq C_0 \sigma_{m} (\Lambda), \quad \forall~1\leq m \leq k-1.
\end{equation}
where  $C_0$ depending only on $n, k, l, \mathbf{p}, \delta$ and $\epsilon$.  In the following, we divide into two subcases to obtain \eqref{i2ir34}.

\textbf{Subcase 2.1: }  $\sigma_{m} (\Lambda | I_1) \geq \theta_1 (-\Lambda_{I_N}) \sigma_{m-1} (\Lambda | I_1 I_N)$, where $\theta_1=\frac{\epsilon \delta}{4(N-2)\mathbf{p}}$.

In this case, the result follows directly from  \eqref{iuqourd-33},
\begin{equation*}
\begin{aligned}
\sigma_{m} (\Lambda | I_1) \geq \theta_1 (-\Lambda_{I_N}) \sigma_{m-1} (\Lambda | I_1 I_N)\geq \theta_1 \theta \sigma_{m}(\Lambda).
\end{aligned}
\end{equation*}

\textbf{Subcase 2.2: }  $\sigma_{m} (\Lambda | I_1) < \theta_1 (-\Lambda_{I_N}) \sigma_{m-1} (\Lambda | I_1 I_N)$.

Note that
$$\Lambda_I\leq \max\{\Lambda_{I_1}, \Lambda_{I_2}\} \leq  \frac{\mathbf{p}}{\mathbf{p}-1+\delta}\Lambda_{I_1},~\quad \forall I \in \mathfrak{J}(\mathbf{p},n). $$
From \eqref{iuqourd-3} and \eqref{iuqourd-33}, it follows that
\begin{align*}
(m+1)\sigma_{m+1} (\Lambda | I_1) &= \sum_{s=2}^N \Lambda_{I_s} \sigma_{m}(\Lambda | I_1)- \sum_{s=2}^N \Lambda_{I_s}^2 \sigma_{m-1}(\Lambda | I_1 I_s)\\
&\leq\sum_{s=2}^N \Lambda_{I_s} \sigma_{m}(\Lambda | I_1)-  \Lambda_{I_N}^2 \sigma_{m-1}(\Lambda | I_1 I_N)\\
&\leq (N-2) \frac{\mathbf{p}}{\mathbf{p}-1+\delta}\Lambda_{I_1} \sigma_{m}(\Lambda | I_1)-  \Lambda_{I_N}^2 \sigma_{m-1}(\Lambda | I_1 I_N)\\
&\leq\frac{(N-2)\mathbf{p}\theta_1 \Lambda_{I_1} (-\Lambda_{I_N})}{\mathbf{p}-1+\delta} \sigma_{m-1} (\Lambda | I_1 I_N)+\frac{\epsilon\delta\Lambda_{I_N}\Lambda_{I_1}}{2(\mathbf{p}-1+\delta)} \sigma_{m-1} (\Lambda | I_1 I_N)\\
&\leq -\frac{\epsilon\delta}{4(\mathbf{p}-1+\delta)} (-\Lambda_{I_N})\Lambda_{I_1} \sigma_{m-1} (\Lambda | I_1 I_N)\\
&\leq -\frac{\epsilon\delta}{4(\mathbf{p}-1+\delta)} \theta \Lambda_{I_1} \sigma_{m} (\Lambda).
\end{align*}
Then
\begin{equation*}
\begin{aligned}
\sigma_{m} (\Lambda | I_1)=\frac{\sigma_{m+1}(\Lambda)-\sigma_{m+1}( \Lambda | I_1)}{\Lambda_{I_1}}
\geq -\frac{\sigma_{m+1}( \Lambda | I_1)}{\Lambda_{I_1}}
\geq C_m \sigma_{m} (\Lambda),
\end{aligned}
\end{equation*}
where $C_m:=\frac{\epsilon\delta}{4(m+1)(\mathbf{p}-1+\delta)} \theta$.
 The \textbf {Claim \eqref{i2ir34}} holds.

 By the Newton-MacLaurin inequality, we obtain
\begin{align}
\frac{\partial \left[\frac{\sigma_k(\Lambda(\lambda))}{\sigma_l(\Lambda(\lambda))}\right]}{\partial \Lambda_{I_1}}
&= \frac{\sigma_{k-1}(\Lambda |I_1)\sigma_{l}(\Lambda |I_1)-\sigma_{k}(\Lambda |I_1)\sigma_{l-1}(\Lambda |I_1)}{\sigma^2_l(\Lambda(\lambda))}\notag\\
&\geq \left(1-\frac{l(N-k)}{k(N-l)}\right)\frac{\sigma_{k-1}(\Lambda |I_1)\sigma_{l}(\Lambda |I_1)}{\sigma^2_l(\Lambda(\lambda))}\notag\\
&\geq  \frac{N(k-l)}{k(N-l)} C_{k-1}C_l \frac{\sigma_{k-1}(\Lambda (\lambda))}{\sigma_l(\Lambda(\lambda))}\label{iuqourd-5533}.
\end{align}
On the other hand,
\begin{align*}
 \sum_I\frac{\partial \left[\frac{\sigma_k(\Lambda(\lambda))}{\sigma_l(\Lambda(\lambda))}\right]}{\partial \Lambda_{I}}
 &=\sum_I\frac{\sigma_{k-1}(\Lambda|I)\sigma_l(\Lambda)-\sigma_k(\Lambda)\sigma_{l-1}(\Lambda|I)}{\sigma_l^2(\Lambda)}\\
 &=\frac{(N-k+1)\sigma_{k-1}(\Lambda)\sigma_l(\Lambda)-(N-l+1)\sigma_k(\Lambda)\sigma_{l-1}(\Lambda)}{\sigma_l^2(\Lambda)}\\
&\leq(N-k+1) \frac{\sigma_{k-1}(\Lambda)}{\sigma_l(\Lambda)}.
\end{align*}
Combining this with \eqref{iuqourd-5533}, we deduce that
\begin{align*}
\frac{\partial \left[\frac{\sigma_k(\Lambda(\lambda))}{\sigma_l(\Lambda(\lambda))}\right]}{\partial \lambda_1}
\geq\frac{\partial \left[\frac{\sigma_k(\Lambda(\lambda))}{\sigma_l(\Lambda(\lambda))}\right]}{\partial \Lambda_{I_1}}
&\geq  \frac{N(k-l)}{k(N-l)} \frac{C_{k-1}C_l}{(N-k+1)} \sum_I \frac{\partial \left[\frac{\sigma_k(\Lambda(\lambda))}{\sigma_l(\Lambda(\lambda))}\right]}{\partial \Lambda_I} \\
&=\frac{N(k-l)}{k(N-l)}\frac{C_{k-1}C_l}{(N-k+1)\mathbf{p}} \sum_i\frac{\partial \left[\frac{\sigma_k(\Lambda(\lambda))}{\sigma_l(\Lambda(\lambda))}\right]}{\partial \lambda_i}.
\end{align*}
\end{proof}

 For convenience, we make the following notations
$$
F(D^2 u):=\frac{\sigma_k(\Lambda(D^2 u))}{\sigma_l(\Lambda(D^2 u))}, \quad F^{ij}=\frac{\partial F}{\partial u_{ij}}, \quad \quad~F^{ij,kl}=\frac{\partial^2 F}{\partial u_{ij} u_{kl}}.$$
$$
\tilde F(D^2 u):=\bigg[\frac{\sigma_k(\Lambda(D^2 u))}{\sigma_l(\Lambda(D^2 u))}\bigg]^{\frac{1}{k-l}}, \quad \tilde{F}^{ij}=\frac{\partial \tilde{F}}{\partial u_{ij}}, \quad \quad~\tilde{F}^{ij,kl}=\frac{\partial^2 \tilde{F}}{\partial u_{ij} u_{kl}}.$$
Therefore, equation \eqref{(1.1)} can be written by
\begin{eqnarray*}\label{(2.11)}
\tilde{F}(D^2 u)= f^\frac{1}{k-l}(x).
\end{eqnarray*}
Lastly, we list the following well-known result.

\begin{lemma}\label{P7}
If $W=(w_{ij})$ is a symmetric real matrix, $\lambda_i=\lambda_i(W)$
is one of the eigenvalues $(i = 1, ..., n)$ and
$F=F(W)=f(\lambda(W))$ is a symmetric function of $\lambda_1, ...,
\lambda_n$, then for any real symmetric matrix $A= (a_{ij})$, we
have the following formulas:
\begin{eqnarray*}\label{Pre-2}
\frac{\partial^2 F}{\partial w_{ij}\partial
w_{st}}a_{ij}a_{st}=\frac{\partial^2 f}{\partial\lambda_p
\partial\lambda_q}a_{pp}a_{qq}+2\sum_{p<q}\frac{\frac{\partial
f}{\partial \lambda_p}-\frac{\partial f}{\partial
\lambda_q}}{\lambda_p-\lambda_q}a^{2}_{pq}.
\end{eqnarray*}
\end{lemma}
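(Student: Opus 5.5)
This is the classical formula for the second derivative of a spectral function, so I would prove it by perturbation theory at a diagonal matrix. By orthogonal invariance of both sides, I first reduce to the case where $W$ is diagonal. Indeed, $F(QWQ^T)=F(W)$ for orthogonal $Q$. Replacing $A$ by $QAQ^T$ transforms the left-hand side as a quadratic form consistently. The right-hand side is also invariant, because it is written in the eigenbasis of $W$. So it suffices to compute
$$\frac{d^2}{dt^2}\Big|_{t=0}F(W+tA)=\frac{\partial^2 F}{\partial w_{ij}\partial w_{st}}a_{ij}a_{st}$$
for $W=\mathrm{diag}(\lambda_1,\dots,\lambda_n)$. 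I would first assume the $\lambda_p$ are distinct and treat the general case at the end.

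With distinct eigenvalues, each $\lambda_p(W+tA)$ is smooth in $t$ near $0$. I would carry out the standard Rayleigh--Schrödinger expansion, using the eigenvector equation $(W+tA)v_p(t)=\lambda_p(t)v_p(t)$ with $v_p(0)=e_p$ and $|v_p|=1$. Differentiating once and pairing with $e_p$ gives $\lambda_p'(0)=a_{pp}$ and $v_p'(0)=\sum_{q\ne p}\frac{a_{qp}}{\lambda_p-\lambda_q}e_q$. Differentiating twice then gives
$$\lambda_p''(0)=2\sum_{q\neq p}\frac{a_{pq}^2}{\lambda_p-\lambda_q}.$$
By the chain rule,
$$\frac{d^2}{dt^2}F=\sum_{p,q}\frac{\partial^2 f}{\partial\lambda_p\partial\lambda_q}\lambda_p'\lambda_q'+\sum_p\frac{\partial f}{\partial\lambda_p}\lambda_p''.$$
The first term is $f_{pq}a_{pp}a_{qq}$. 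In the second term I would pair the contributions of $(p,q)$ and $(q,p)$ for $p<q$, which gives
$$2\sum_{p<q}\frac{f_p-f_q}{\lambda_p-\lambda_q}a_{pq}^2.$$
This is exactly the claimed formula.

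For repeated eigenvalues, the quotient $\frac{f_p-f_q}{\lambda_p-\lambda_q}$ is interpreted as the limit, which exists because $f$ is symmetric and smooth. I would argue by density: matrices with simple spectrum are dense, and both sides are continuous in $W$. Continuity of the left-hand side needs $F\in C^2$, which holds for smooth symmetric $f$ (Ball/Glaeser type results). Continuity of the right-hand side follows since symmetry gives $f_p=f_q$ when $\lambda_p=\lambda_q$, so the difference quotient extends continuously.

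The main obstacle is this degenerate step: justifying that $F$ is $C^2$ and that the limit argument is legitimate. For the application here that is routine, since $F$ is a rational function of the matrix entries through $\sigma_k(W)$ and $\sigma_l(W)$ on the admissible cone. The rest is bookkeeping in the second-order perturbation computation.
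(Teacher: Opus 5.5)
The paper gives no argument of its own for this lemma; it only refers to \cite{CG}. Your proposal supplies what the paper omits: it is the standard proof, and it is correct. Reading the right-hand side in an eigenbasis of $W$, and reducing to $W=\mathrm{diag}(\lambda_1,\dots,\lambda_n)$ via $F(QWQ^T)=F(W)$, is the right interpretation of the loosely stated lemma. For simple spectrum, the formulas $\lambda_p'(0)=a_{pp}$, $v_p'(0)=\sum_{q\neq p}\frac{a_{qp}}{\lambda_p-\lambda_q}e_q$ and $\lambda_p''(0)=2\sum_{q\neq p}\frac{a_{pq}^2}{\lambda_p-\lambda_q}$, together with pairing $(p,q)$ with $(q,p)$ in $\sum_p f_p\lambda_p''$, give exactly the stated identity.

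Two points are worth tightening.

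\emph{Repeated eigenvalues.} The assertion that the right-hand side is continuous in $W$ is true but not immediate. At $W$ the eigenbasis is not unique, and eigenvectors of nearby simple-spectrum matrices need not converge within a cluster. You can avoid this by approximating along a fixed eigenbasis. If $W=Q\,\mathrm{diag}(\lambda)Q^T$, take $W_m=Q\,\mathrm{diag}(\lambda+\epsilon_m(1,2,\dots,n))Q^T$ with $\epsilon_m\to 0$. Then the entries of $Q^TAQ$ stay fixed, and only the coefficients $f_{pq}$ and $\frac{f_p-f_q}{\lambda_p-\lambda_q}$ move. These converge because $f\in C^2$ and $f_p-f_q$ vanishes on $\{\lambda_p=\lambda_q\}$; the limiting value there is $f_{pp}-f_{pq}$. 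The left-hand side converges because $F\in C^2$.

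\emph{Smoothness of $F$ in this paper.} Here $\sigma_k(\Lambda(W))$ is $\sigma_k$ of the $N\times N$ matrix of $\mathcal D_W$ from Proposition \ref{wij}, whose entries are linear in the $w_{ij}$. So $F$ is a rational function of the $w_{ij}$ with $\sigma_l(\Lambda(W))>0$ on the admissible set, and $F$ and $\tilde F=F^{\frac{1}{k-l}}$ are smooth there directly. It is not built from $\sigma_k(W)$ and $\sigma_l(W)$ as you wrote, and no Ball- or Glaeser-type theorem is needed for this application.
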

\begin{proof}
The proof can be found in \cite{CG}.
\end{proof}

\section{$C^0$ estimates}
In this section, we establish the $C^0$ estimates for the $(\Lambda, k)$-admissible solution of equation \eqref{(1.7)}.

\begin{theorem}\label{C0}
Let $\Omega\subset\mathbb{R}^{n}$  be a bounded domain with $C^1$ boundary, $f$ be a positive function  and $\varphi\in C^{0}(\partial\Omega)$. Suppose that $u\in C^{2}(\Omega)\cap C^{1}(\overline{\Omega})$ is a $(\Lambda, k)$-admissible solution of \eqref{(1.7)} in $\Omega$ with $\varepsilon\in(0,1)$. Then
\begin{eqnarray*}\label{(3.1)}
\max_{\overline{\Omega}}|\varepsilon u|\leq M_{0},
\end{eqnarray*}
where $M_{0}$ depends on $n, k, l , \mathbf{p}, \sup_{{\Omega}}f, |\varphi|_{C^{0}}$ and $\Omega.$
\end{theorem}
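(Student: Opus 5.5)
We bound $\varepsilon u$ from above and from below separately, comparing $u$ with a quadratic barrier and using the Neumann condition $u_\nu=-\varepsilon u+\varphi$ at the boundary extremum. The argument is the standard one of Lions--Trudinger--Urbas and Qiu--Xia, adapted to the operator $F=\sigma_k(\Lambda)/\sigma_l(\Lambda)$.

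\emph{Lower bound on $\varepsilon u$.} Since $u$ is $(\Lambda,k)$-admissible, $\Lambda(D^2u)\in\tilde\Gamma_k\subset\tilde\Gamma_1$. Hence $\sigma_1(\Lambda(D^2u))=C_{n-1}^{\mathbf p-1}\Delta u>0$, so $u$ is subharmonic. Its maximum is therefore attained at some $x_0\in\partial\Omega$. By the Hopf-type argument (or simply by taking one-sided derivatives in the inner direction), $u_\nu(x_0)\ge 0$, which gives $-\varepsilon u(x_0)+\varphi(x_0)\ge0$ and so $\varepsilon\max u\le|\varphi|_{C^0}$. Because $\partial\Omega$ is only $C^1$, this is phrased by comparison along the normal at $x_0$; a strict inequality is not needed.

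\emph{Bound from below via a barrier.} Choose a point $a$ and set $w=\frac{A}{2}|x-a|^2$, with $A$ large depending on $\sup f$, $n,k,l,\mathbf p$. Then $D^2w=AI$, $\Lambda(D^2w)=\mathbf pA(1,\dots,1)$, and
\[
F(D^2w)=\frac{C_N^k}{C_N^l}(\mathbf pA)^{k-l}>\sup f\ge F(D^2u).
\]
Consider $u-w$ and let its minimum be attained at a point $y_0$.
\begin{itemize}
\item If $y_0$ were interior, then $D^2u\ge D^2w$ there. Monotonicity of $F$ on the cone (Proposition~\ref{P4}, where $\partial F/\partial\Lambda_I>0$ on $\tilde\Gamma_k$) would then give $F(D^2u)\ge F(D^2w)>f$, a contradiction.
\item So $y_0\in\partial\Omega$, and there $(u-w)_\nu\le0$. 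This gives $-\varepsilon u(y_0)+\varphi(y_0)\le w_\nu(y_0)\le A\,\mathrm{diam}(\Omega)$, hence $\varepsilon u(y_0)\ge-|\varphi|_{C^0}-A\,\mathrm{diam}\Omega$.
\end{itemize}
For every $x\in\Omega$ we then have $u(x)\ge u(y_0)+w(x)-w(y_0)\ge u(y_0)-\frac{A}{2}\mathrm{diam}^2$. Multiplying by $\varepsilon<1$ gives $\varepsilon\min u\ge-M_0$.

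\emph{Main obstacle.} Two points need care. The first is the boundary sign argument when $\partial\Omega$ is merely $C^1$, which only needs the first-order condition at an extremum, so it is fine. The second is the interior comparison step: one must check that $D^2u\ge D^2w$ with $\Lambda(D^2w)\in\tilde\Gamma_N$ implies $F(D^2u)\ge F(D^2w)$. For this I would use that $\Lambda(D^2u)\ge\Lambda(D^2w)$ componentwise after ordering (by Weyl's inequality, since $\mathcal D$ is linear and monotone in $A$). Then I would apply the monotonicity of $\sigma_k/\sigma_l$ along the segment joining the two points inside the convex cone $\tilde\Gamma_k$.

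Combining the two bounds gives $\max|\varepsilon u|\le M_0$, with $M_0$ depending only on $n,k,l,\mathbf p$, $\sup f$, $|\varphi|_{C^0}$ and $\Omega$.
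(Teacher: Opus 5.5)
Your proof is correct and follows the paper's argument: subharmonicity ($\sigma_1(\Lambda(D^2u))>0$) plus the Neumann condition at a boundary maximum gives the upper bound, and a quadratic barrier with the boundary condition at the minimum of $u-w$ gives the lower bound. Choosing $A$ so that $F(D^2w)>\sup f$ strictly is a small improvement: the paper's barrier only achieves $F(D^2v)=\sup f$ and then appeals to a comparison principle to place the minimum on $\partial\Omega$, whereas your strict inequality excludes an interior minimum directly (note also that your first heading should read ``upper bound on $\varepsilon u$'').
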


\begin{proof}
On the one hand, note that $\Delta u>0$ since $\Lambda(D^2 u)\in \tilde{\Gamma}_k\subset \tilde{\Gamma}_1$. Hence $u$ attains its maximum value at $x_0 \in \partial \Omega $, then
$$0\leq u_\nu(x_0)=-\varepsilon u(x_0)+\varphi(x_0).$$
Therefore,
\begin{eqnarray*}\label{(3.2)}
\max_{{\overline\Omega}}\varepsilon u\leq\sup_{\partial\Omega} |\varphi|.
\end{eqnarray*}

On the other hand, suppose that $0\in\Omega$ and  take
\begin{eqnarray*}
v=A|x|^{2},
\end{eqnarray*}
where $A=\frac{1}{2\mathbf{p}}\left[\frac{C_{N}^{l}}{C_{N}^{k}}\sup_{{\Omega}}f \right]^{\frac{1}{k-l}}$. It is easy to see that
\begin{eqnarray*}
F(D^2u)\leq\sup_{{\Omega}}f\leq F(D^2v).
\end{eqnarray*}
By the comparison principle, $u-v$ attains its minimum point at $x_{1} \in \partial \Omega$, then
\begin{eqnarray*}
0\geq(u-v)_\nu(x_1)=-\varepsilon u(x_1)+\varphi(x_1)-2Ax_1\cdot\nu.
\end{eqnarray*}
For all $x\in\overline{\Omega}$,
\begin{eqnarray*}
\varepsilon(u-v)(x)\geq\varepsilon(u-v)(x_1)\geq-2A\mathrm{diam}(\Omega)-|\varphi|_{C^0}-A(\mathrm{diam}(\Omega))^2.
\end{eqnarray*}
Thus,
\begin{eqnarray*}\label{(3.3)}
\min_{\bar\Omega}\varepsilon u\geq\min_{\bar\Omega}\varepsilon(u-v)\geq-|\varphi|_{C^{0}}-A(\mathrm{diam}(\Omega))^{2}-2A\mathrm{diam}(\Omega).
\end{eqnarray*}
\end{proof}

\begin{remark}
It is worth noting that the assumption $\Lambda(D^{2}u)\in\tilde\Gamma_{k}$ implies that $f$ does not vanish in $\Omega$, and the estimate on $\left|\varepsilon u\right|_{0,\overline{\Omega}}$ is independent of the lower bound of $f$. If $f\geq 0$, we can obtain the same estimate by considering $f_{\delta}=f+\delta$ first, then let $\delta\rightarrow0$ to complete the proof. We will not repeat this argument in subsequent sections where it applies.
\end{remark}

\section{global gradient estimates}
Following a similar argument of the complex Monge-Amp\'ere equation in  Li \cite {LSY}, we consider the global gradient  estimates (independent of $\varepsilon$) for the $(\Lambda, k)$-admissible solution of equation \eqref{(1.7)}  in this section.

\begin{theorem}\label{C1}
Let $\Omega\subset\mathbb{R}^{n}$ be a $C^3$ strictly convex domain,  $f$ be a positive function with $f^{\frac{1}{k-l}}\in C^{1}(\overline{\Omega})$  and  $\varphi\in C^{2}(\partial\Omega)$. Suppose that $u\in C^{3}(\Omega)\cap C^{2}(\overline{\Omega})$ is a $(\Lambda, k)$-admissible solution of \eqref{(1.7)}, then
\begin{eqnarray*}\label{(4.1)}
\sup_{\overline{\Omega}}|Du|\leq M_{1},
\end{eqnarray*}
where $M_{1}$ depends on $n, k, l, \mathbf{p}, M_{0}, |\varphi|_{C^2}, |f^{\frac1{k-1}}|_{C^1}$ and $\Omega$.
\end{theorem}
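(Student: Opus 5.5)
We follow the approach of Li~\cite{LSY} (as adapted by Chen--Zhang~\cite{CZ} to Hessian quotient equations): a maximum principle argument for a directional derivative, with the degenerate directions handled by Lemma~\ref{f11}. Throughout we use the concave form $\tilde F(D^2u)=f^{\frac1{k-l}}$. Differentiating once gives, for any fixed direction $\xi$,
\begin{equation*}
\tilde F^{ij}u_{\xi ij}=(f^{\frac1{k-l}})_\xi .
\end{equation*}
By Proposition~\ref{P4}~(\romannumeral3) we also have $\sum_i\tilde F^{ii}\geq C_{\mathbf p}$. Hence $|(f^{\frac1{k-l}})_\xi|\le C\sum_i \tilde F^{ii}$, and this bound is independent of $\inf f$.

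\textbf{Auxiliary function.} We bound $\max_{\overline\Omega}\max_{|\xi|=1}u_\xi$. Strict convexity of $\Omega$ supplies a defining function $h$ with $h<0$ in $\Omega$, $h=0$ on $\partial\Omega$, $D^2h\geq \kappa I$ and $Dh=\nu$ on $\partial\Omega$. We consider
\begin{equation*}
G(x,\xi)=u_\xi - \langle \xi,\nu\rangle\big(-\varepsilon u+\varphi\big)+ K|x|^2 ,
\end{equation*}
where $\nu$ and $\varphi$ are extended smoothly to $\overline\Omega$, and $K$ is chosen large. Suppose the maximum is attained at $(x_0,\xi_0)$.

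\textbf{Interior maximum.} Rotate coordinates so that $D^2u(x_0)$ is diagonal. We then compute $\tilde F^{ij}G_{ij}$. The terms of the form $\tilde F^{ij}(\varepsilon u\,\nu)_{ij}$ produce $\varepsilon \tilde F^{ii}u_{ii}\nu$ and mixed terms $\varepsilon\tilde F^{ii}u_{i}\nu_i$.
\begin{itemize}
\item By concavity and homogeneity, $\sum_i\tilde F^{ii}u_{ii}\le \tilde F = f^{\frac1{k-l}}$.
\item The $\varepsilon u_i$ terms are bounded by $\varepsilon |Du|\sum_i\tilde F^{ii}$.
\end{itemize}
Since $\varepsilon|Du|$ is controlled through the first order condition and $M_0$ (Theorem~\ref{C0}), the term $2K\sum_i\tilde F^{ii}$ dominates for $K$ large. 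This contradicts $\tilde F^{ij}G_{ij}\le 0$, so the maximum lies on $\partial\Omega$.

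\textbf{Boundary maximum.} Write $\xi_0=\alpha\tau+\beta\nu$ with $\tau$ tangential.
\begin{itemize}
\item If $\xi_0$ is normal, then $u_\nu=-\varepsilon u+\varphi$ and the correction term gives $G$ bounded by $M_0$ and $|\varphi|_{C^0}$.
\item If $\xi_0$ is tangential, we use that $G(x_0,\cdot)$ is maximized at $\xi_0$ and that $u_{\xi_0}$ is maximal among tangential directions. Differentiating the boundary condition along $\partial\Omega$ and using $\partial_\nu(u_\tau)=(u_\nu)_\tau - u_j(\nu_j)_\tau$, the convexity of $\partial\Omega$ gives $(u_\tau)_\nu\le -\kappa u_\tau + C(1+\varepsilon|Du|)$. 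On the other hand, the Hopf-type condition at a boundary maximum gives $G_\nu(x_0,\xi_0)\ge 0$.
\item The general $\xi_0$ reduces to these two cases, because $G$ is linear in $\xi$ up to the $\langle\xi,\nu\rangle$ term, which was designed exactly to cancel the normal part.
\end{itemize}
Combining the two inequalities in the tangential case bounds $u_\tau$, with $K$ chosen first and the constant in $C(1+\varepsilon|Du|)$ absorbed.

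\textbf{Main obstacle.} The delicate step is the tangential boundary case. There $\varepsilon u_\tau$ and the curvature term must be compared without a lower bound on $f$. We expect to need the strict convexity constant $\kappa$ to absorb $C\varepsilon|Du|$ for $\varepsilon$ small. If this fails, we would instead multiply $u_\xi$ by a weight $e^{Ah}$, so that the normal derivative of the weight supplies the missing negative term. Applying the argument also to $-\xi$ gives the lower bound, and hence $\sup|Du|\le M_1$.
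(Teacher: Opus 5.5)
Your overall route is the paper's: Li's test function, the interior/tangential/non-tangential split, and $\sum_i\tilde F^{ii}\ge C_{\mathbf p}$ to absorb constants. However, the interior case has a genuine gap. Freeze $\xi_0$ and set $g:=\langle\nu,\xi_0\rangle$. Your $G$ gives
\[
\tilde F^{ii}G_{ii}=(f^{\frac1{k-l}})_{\xi_0}+\varepsilon g\,\tilde F^{ii}u_{ii}+2\varepsilon\tilde F^{ii}g_iu_i+\tilde F^{ii}\bigl[2K-g_{ii}(-\varepsilon u+\varphi)-2g_i\varphi_i-g\varphi_{ii}\bigr].
\]
The cross term $2\varepsilon\tilde F^{ii}g_iu_i$ is only bounded by $C\varepsilon|Du(x_0)|\sum_i\tilde F^{ii}$, and you have no control of $\varepsilon|Du|$:
\begin{itemize}
\item Theorem~\ref{C0} bounds $\varepsilon u$, not $\varepsilon Du$.
\item The first-order condition $G_i=0$ only expresses $\xi_0^iu_{ii}$ through first-order terms (including $\varepsilon g u_i$ itself) and $Kx_i$.
\item The only available bound, $|Du(x_0)|\le\max G+C$, would make $K$ depend on $\varepsilon\max G$, which is circular.
\end{itemize}
The paper's fix is to add $\varepsilon^2u^2$ to $G$, which is harmless since $|\varepsilon u|\le M_0$. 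This term contributes $2\varepsilon^2\tilde F^{ii}u_i^2+2\varepsilon^2u\,f^{\frac1{k-l}}$. Completing the square, $2\varepsilon^2u_i^2+2\varepsilon g_iu_i\ge-\frac12g_i^2$, absorbs the cross term with no gradient information. On $\partial\Omega$ it only adds $2\varepsilon^2uu_\nu=2\varepsilon^2u(-\varepsilon u+\varphi)$, which is bounded, so your boundary cases are unaffected.

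There are also smaller points to correct.
\begin{itemize}
\item Since $g$ has no sign, you need two-sided control of $\tilde F^{ii}u_{ii}$. Use the exact Euler identity $\sum_i\tilde F^{ii}u_{ii}=\tilde F=f^{\frac1{k-l}}$ (from 1-homogeneity), not merely ``$\le$''.
\item Your ``main obstacle'' is misplaced. In the tangential boundary case, $D_{\xi_0}(u_\nu)=-\varepsilon u_{\xi_0}+\varphi_{\xi_0}\le\varphi_{\xi_0}$ because $u_{\xi_0}(x_0)>0$. So the $\varepsilon$-term has a favorable sign and needs no absorption by $\kappa$ or by a weight $e^{Ah}$.
\item To close that case: maximality in $\xi$ makes the other tangential components of $Du(x_0)$ vanish, and $u_\nu$ is bounded by the boundary condition. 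Hence $-D_{\xi_0}\nu^kD_ku\le-\kappa_{\min}u_{\xi_0}+C$, which finishes the tangential case directly.
\item Lemma~\ref{f11} plays no role in the gradient estimate.
\end{itemize}
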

\begin{proof}
It suffices to prove that
\begin{eqnarray*}\label{(4.2)}
D_\xi u(x)\leq M_1,\quad\forall\left(x,\xi\right)\in\overline{\Omega}\times\mathbb{S}^{n-1}.
\end{eqnarray*}

For any $(x,\xi)\in\overline{\Omega}\times\mathbb{S}^{n-1}$, consider the following test function
\begin{eqnarray*}\label{(4.3)}
G(x,\xi)=D_{\xi}u(x)-\langle\nu,\xi\rangle(-\varepsilon u+\varphi(x))+\varepsilon^{2}u^{2}+K|x|^{2},
\end{eqnarray*}
where $K$ is a large constant to be determined later, and $\nu$ is a $C^{2}(\overline{\Omega})$ extension of the outer unit normal vector field on $\partial\Omega$.
If one can prove that
\begin{eqnarray}\label{(4.4)}
G(x,\xi)\leq C,    ~\quad ~ \forall (x,\xi)\in\overline{\Omega}\times\mathbb{S}^{n-1},
\end{eqnarray}
then \eqref{(4.2)} follows immediately.

Assume $G$ achieves its maximum at $(x_0, \xi_0)$ $\in$ $\overline {\Omega }\times$ $\mathbb{S} ^{n-1}$, then $G(x,\xi_{0})$ achieves its maximum at the same point $x_{0}\in\bar{\Omega}$.
Clearly, $D_{\xi_{0}}u(x_{0})>0$; otherwise, the result follows immediately. We distinguish two cases based on $x_0$.

\textbf{Case 1: }$x_0\in \Omega.$

Firstly,  rotate the coordinates so that $D^{2}u(x_{0})$ is diagonal.
A direct calculation at  $x_0$ gives that
\begin{equation*}
0=G_i=u_{i\xi_{0}}-\langle v,\xi_0\rangle_i(-\varepsilon u+\varphi)-\langle v,\xi_0\rangle(-\varepsilon u_{i}+\varphi_{i})+2\varepsilon^{2}uu_{i}+2Kx_{i}.
\end{equation*}
Then combining with Proposition \ref{P4} (\romannumeral3),
\begin{align*}
0\geq& \widetilde{F}^{ii}G_{ii}\\
=&(f^{\frac{1}{k-l}})_{\xi_0}+\widetilde{F}^{ii}u_{ii}[\varepsilon\langle\nu,\xi_{0}\rangle+2\varepsilon^{2}u]
+\widetilde{F}^{ii}[2\varepsilon^{2}u_{i}^{2}+2\langle\nu,\xi_{0}\rangle_{i}\varepsilon u_{i}]\\
&+\widetilde{F}^{ii}[2K-\langle\nu,\xi_0\rangle_{ii}(-\varepsilon u+\varphi)-\langle\nu,\xi_0\rangle\varphi_{ii}-2\langle\nu,\xi_0\rangle_i\varphi_i]\\
\geq&-|f^{\frac{1}{k-l}}|_{C^1}-|f^{\frac{1}{k-l}}|_{C^0}[1+2M_0]\\
&+\sum_i\widetilde{F}^{ii}\left[2K-|D\langle\nu,\xi_0\rangle|^2-|D^2\langle\nu,\xi_0\rangle|(M_0+|\varphi|_{C^0}) -|D^2\varphi|-2|D\langle\nu,\xi_0\rangle||D\varphi|\right]\\
>&0,
\end{align*}
if we choose $K$ large enough, which depending only on $n, k, l, \mathbf{p}, M_{0}, |\varphi|_{C^2}, |f^{\frac1{k-1}}|_{C^1}$ and $\Omega$,  this is a contradiction.

\textbf{Case 2: }$x_0\in \partial \Omega$. We consider two subcases for $\xi_{0}$.

\textbf{Subcase 2.1: }$\xi_{0}$ is tangential at $x_{0}\in\partial\Omega$. Without loss of generality, the outer normal direction of $\Omega$ at  the boundary point $x_0$ is $(0,\ldots,0,1)$. By a rotation,  assume that $\xi_{0}=(1,\ldots,0)=e_{1}$ and $\xi(t)={\frac{(1,t,0,\ldots,0)}{\sqrt{1+t^{2}}}}$, then
\begin{equation*}
0=\left.\frac{dG(x_0,\xi(t))}{dt}\right|_{t=0}=u_2(x_0)-\nu^2(-\varepsilon u+\varphi(x_0)),
\end{equation*}
where $\nu^2$ denotes the second component of $\nu$. So
\begin{equation*}
|u_2|(x_0)\leq C,
\end{equation*}
where the constant $C$ depends on $|\varphi|_{C_0}$, $M_{0}$ and $\Omega$.
Similarly,  $\left|u_{i}\right|(x_0)\leq C$  holds for any $ i>1$. By the Hopf Lemma,
\begin{equation*}
\begin{aligned}
0\leq D_{\nu}G(x_{0},\xi_{0})& =D_{\nu}D_{1}u-D_{\nu}[\langle\nu,\xi_{0}\rangle(-\varepsilon u+\varphi)]+2\varepsilon^2 u D_{\nu}u+2Kx_0\cdot \nu \\
&\leq D_{\nu}D_{1}u+C \\
&=D_{1}D_{\nu}u-D_{1}\nu_{k}D_{k}u+C.
\end{aligned}
\end{equation*}
From the boundary condition,
\begin{equation*}
D_1D_\nu u=D_1(-\varepsilon u+\varphi)\leq D_1\varphi.
\end{equation*}
Following the argument of \cite {LSY}, we derived that
\begin{equation*}
-D_1\nu_kD_ku\leq-\kappa_\text{min}G(x_0,\xi_0)+C,
\end{equation*}
where $\kappa_{\mathrm{min}}$ is the minimum principal curvature of $\partial\Omega$. Therefore,
\begin{eqnarray*}
G(x_0,\xi_0)\leq\frac{|D\varphi|+C}{\kappa_{\min}}.
\end{eqnarray*}

\textbf{Subcase 2.2: }$\xi_{0}$ is non-tangential at $x_0\in\partial\Omega$.  Write $\xi_{0}=\alpha\tau+\beta\nu$, where $\tau\in\mathbb{S}^{n-1}$ is tangential at $x_0$, that is $\langle\tau,\nu\rangle=0$, $\alpha=\langle\xi_{0},\tau\rangle\in[0,1)$,  and $\alpha^{2}+\beta^{2}=1$. A straightforward calculation shows that
 \begin{equation*}
G(x_{0},\tau) = D_{\tau}u+\varepsilon^{2}u^{2}+K|x_{0}|^{2},
\end{equation*}
and
\begin{equation*}
\begin{aligned}
G(x_{0},\xi_{0})& =\alpha D_{\tau}u+\varepsilon^{2}u^{2}+K|x_{0}|^{2} \\
&\leq\alpha G(x_{0},\xi_{0})+(1-\alpha)(\varepsilon^{2}u^{2}+K|x_{0}|^{2}).
\end{aligned}
\end{equation*}
It implies
\begin{equation*}
G(x_0,\xi_0)\leq\varepsilon^2u^2+K|x_0|^2\leq C.
\end{equation*}
In summary, \eqref{(4.4)} holds, and the proof is complete.

\end{proof}


\section{global estimates for second-order derivatives}
In this section, we prove the global second-order  derivatives estimates for the $(\Lambda, k)$-admissible solution of equation \eqref{(1.7)} by Lemma \ref{f11} and  \ref{l2}.

\subsection{Reduce global  second-order derivatives to double normal  second-order derivatives on the boundary}
\begin{theorem}\label{C211}
Let $\Omega\subset\mathbb{R}^{n}$ be a bounded and strictly convex domain with $C^4$ boundary, $\varphi\in C^3(\partial\Omega)$ and $f$ be a positive function with $f^{\frac1{k-l}}\in C^2(\overline{\Omega})$. Suppose that $u\in C^{4}(\Omega)\cap C^{3}(\overline{\Omega})$ is a $(\Lambda,k)$-admissible solution of \eqref{(1.7)} in $\Omega$, then
\begin{equation*}\label{(5.11)}
\sup_{\overline{\Omega}}|D^2u|\leq M_2(1+\sup_{\partial\Omega}\left|u_{\nu\nu}\right|),
\end{equation*}
where $M_2$ depends on
$n, k, l, \mathbf{p}, M_{0}, M_{1}, |f^{\frac{1}{k-l}}|_{C^{2}},|\varphi|_{C^{3}}$ and $\Omega$.
\end{theorem}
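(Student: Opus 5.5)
We follow the scheme of Lions--Trudinger--Urbas and Chen--Zhang \cite{CZ}, with Lemma \ref{f11} and Lemma \ref{l2} replacing the strong ellipticity used in \cite{GTL}. Since $\Lambda(D^2u)\in\tilde\Gamma_1$ forces $\Delta u>0$, every eigenvalue of $D^2u$ is bounded below by $-(n-1)\max_i\lambda_i$. It therefore suffices to bound $\max_{\overline\Omega}\max_{|\xi|=1}u_{\xi\xi}$. We set $M=\sup_{\partial\Omega}|u_{\nu\nu}|$ and consider the test function
\begin{equation*}
H(x,\xi)=u_{\xi\xi}-v(x,\xi)+a|Du|^2+A|x|^2,
\end{equation*}
where $v(x,\xi)=2\langle\xi,\nu\rangle\langle\xi',D\varphi-\varepsilon Du-u_lD\nu^l\rangle$ with $\xi'=\xi-\langle\xi,\nu\rangle\nu$, and $\nu$ is extended as a $C^3$ field. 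The term $v$ compensates for the mixed tangential--normal derivatives coming from differentiating the boundary condition $u_\nu=-\varepsilon u+\varphi$. The constants $a,A$ are chosen large. Let $(x_0,\xi_0)$ be a maximum point of $H$. We split into the cases $x_0\in\Omega$ and $x_0\in\partial\Omega$.

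\textbf{Interior case.} We rotate coordinates so that $D^2u(x_0)$ is diagonal with $\xi_0=e_1$ and $u_{11}=\lambda_1$. Write $\tilde F(D^2u)=f^{1/(k-l)}$ and differentiate twice in $e_1$. Concavity (Proposition \ref{P4}(iii)) gives $\tilde F^{ii}u_{ii11}\ge (f^{1/(k-l)})_{11}\ge -C$; this is where $f^{1/(k-l)}\in C^2$ is used. The term $a|Du|^2$ contributes $2a\sum_i\tilde F^{ii}u_{ii}^2$ plus lower-order terms. The term $A|x|^2$ contributes $2A\sum_i\tilde F^{ii}$, which is bounded below by $2AC_{\mathbf p}$ by Proposition \ref{P4}(iii). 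The terms coming from $v$ are at most $C(1+\lambda_1)\sum\tilde F^{ii}$ plus first-order terms. These are controlled by $a\tilde F^{11}\lambda_1^2$ provided $\tilde F^{11}\ge c\sum\tilde F^{ii}$, or alternatively by the $a\sum\tilde F^{ii}\lambda_i^2$ term. To obtain this, we look at the eigenvalue $\lambda_n$. If $-\lambda_n\ge\epsilon\lambda_1$, then $\sum\tilde F^{ii}\lambda_i^2\ge \tilde F^{nn}\lambda_n^2\ge \tfrac1n\epsilon^2\lambda_1^2\sum\tilde F^{ii}$, using the ordering in Proposition \ref{P4}(ii). Otherwise $\lambda_n>-\epsilon\lambda_1$ and all eigenvalues are bounded below by $-\epsilon\lambda_1$. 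Then the terms absorbed are only of order $\epsilon\lambda_1$ and $A$-terms. This case split is the one dictated by Lemma \ref{l2}: whenever a negative eigenvalue is comparable to $\lambda_1$, that lemma (with $\delta=1$, since $\lambda_1$ is the largest) gives $\tilde F^{11}\ge C_2\sum\tilde F^{ii}$. In all cases we obtain $\lambda_1\le C$ at $x_0$.

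\textbf{Boundary case.} If $\xi_0$ is normal, $H\le M+C$. If $\xi_0$ is neither normal nor tangential, we decompose $\xi_0=\alpha\tau+\beta\nu$ and use the choice of $v$ to reduce to the tangential and normal cases, as in \cite{LION,MQ}. In the tangential case, $\xi_0=e_1$ with $\nu(x_0)=e_n$, the Hopf-type inequality $D_\nu H(x_0,\xi_0)\ge0$ holds. We then compute $u_{11\nu}$ by differentiating the boundary condition twice tangentially. Strict convexity of $\partial\Omega$ yields a good term $-\kappa_{\min}u_{11}$ up to $C(1+M)$. This gives $u_{11}(x_0)\le C(1+M)$, where $a,A$ are fixed before this step so that their normal derivatives are bounded.

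\textbf{Main obstacle.} We expect the main difficulty to be the interior step in the regime where $\lambda_1$ is large but there is no eigenvalue below $-\epsilon\lambda_1$ and $\tilde F^{11}$ may be tiny. Here one must also handle third-order terms $\tilde F^{ii}u_{11i}^2/\lambda_1$, if we switch to $\log u_{\xi\xi}$. We would first try the linear test function above, in which no third-order terms appear except through the gradient condition $u_{11i}=v_i-2au_iu_{ii}-2Ax_i$. For cross terms of the form $\tilde F^{ii}u_{ii}u_i$, we use Cauchy--Schwarz against $a\tilde F^{ii}u_{ii}^2$. The remaining term $C\lambda_1\tilde F^{11}$ is harmless. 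Terms with $i\ne1$ are bounded by $C\sum\tilde F^{ii}(1+|\lambda_i|)$, and these are absorbed by $a\sum\tilde F^{ii}\lambda_i^2+A\sum\tilde F^{ii}$. If this absorption fails, Lemma \ref{f11} applied to a negative direction (with the roles of the indices relabeled) supplies the needed lower bound on the corresponding $\tilde F^{ii}$.
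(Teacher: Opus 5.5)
Your proposal is correct and is essentially the paper's proof. The ingredients match: the same test function $u_{\xi\xi}-v(x,\xi)+B|x|^2+|Du|^2$; concavity giving $\tilde F^{ij}u_{ij\xi\xi}\ge (f^{\frac{1}{k-l}})_{\xi\xi}$; the once-differentiated equation to handle the third-order terms; $\sum_i\tilde F^{ii}\ge C_{\mathbf p}$ to absorb constants; and on $\partial\Omega$ the normal/tangential reduction via $v$, together with $H_\nu\ge 0$, $|u_{1i}(x_0)|\le C$ (from maximality in $\xi$) and strict convexity.

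The one real difference is that your case split on $\lambda_n$ and the appeals to Lemmas \ref{f11} and \ref{l2} are unnecessary. Since $C\tilde F^{ii}|u_{ii}|\le 2\tilde F^{ii}u_{ii}^2+\frac{C^2}{8}\tilde F^{ii}$ holds termwise, the absorption in your last paragraph never fails, and the interior case is an outright contradiction for $B$ large. It also does not require $\xi_0$ to be an eigenvector of $D^2u(x_0)$, which in general it is not, since $v$ depends on $\xi$. In the paper, those two lemmas are used only in the double normal estimates of Lemmas \ref{c23} and \ref{c24}.
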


\begin{proof}
It suffices to prove that
\begin{equation*}\label{(5.2)}
u_{\xi\xi}\leq M_2(1+\sup_{\partial\Omega}\left|u_{\nu\nu}\right|),\quad\forall\xi\in\mathbb{S}^{n-1}.
\end{equation*}

Assume $0\in \Omega$ and consider the following auxiliary function in $\bar{\Omega}\times\mathbb{S}^{n-1}$:
\begin{equation*}\label{(5.13)}
H(x,\xi):=u_{\xi\xi}-v(x,\xi)+B|x|^2+|Du|^2,
\end{equation*}
where $v(x,\xi)=a^kD_ku+b:=2(\xi\cdot\nu)\xi'\cdot(D(-\varepsilon u+\varphi(x))-D_kuD\nu^k)$ with $\xi^{\prime}=\xi-(\xi\cdot\nu)\nu$, $a^{k} = 2( \xi \cdot \nu ) ( - \varepsilon \xi ^{\prime k} - \xi ^{\prime i}D_{i}\nu ^{k}), ~b = 2( \xi \cdot \nu ) \xi ^{\prime k}\varphi _{x_{k}}$, $B$ is a positive constant to be determined  later.

Assume $H$ achieves its maximum at $(x_0, \xi_0)$ $\in$ $\overline {\Omega }\times$ $\mathbb{S} ^{n-1}$,  then  $H(x,\xi_{0})$ achieves its maximum at the same point $x_{0}\in\bar{\Omega}$.  Rotate the coordinates so that $D^2u(x_0)$ is diagonal and consider   two cases for $x_0$.

\textbf{Case 1: }$x_0\in \Omega.$  By a direct computation,
\begin{equation*}\label{(5.14)}
0=H_{i}=u_{\xi\xi i}-a_i^{k}u_{k}-a^{k}u_{ki}-b_i+2Bx_{i}+2u_{k}u_{ki},
\end{equation*}
and
\begin{equation}\label{(5.15)}
\begin{aligned}
0\geq H_{ij}=&u_{\xi\xi ij}-a_{ij}^ku_k-a_i^ku_{kj}-a_j^ku_{ki}-a^ku_{kij}-b_{ij}\\
&+2B\delta_{ij}+2u_{ki}u_{kj}+2u_ku_{kij}.
\end{aligned}
\end{equation}
According to Lemma  \ref{P7},
\begin{equation}\label{(5.16)}
\widetilde{F}^{ij}u_{ij\xi\xi}=(f^{\frac{1}{k-l}})_{\xi\xi}-\widetilde{F}^{ij,kl}u_{ij\xi}u_{kl\xi}
\geq(f^{\frac{1}{k-l}})_{\xi\xi}.
\end{equation}
Now  contract \eqref{(5.15)} with $\tilde{F}^{ij}$ and use \eqref{(5.16)}, we obtain
\begin{align*}
0\geq\widetilde{F}^{ij}H_{ij}=&\widetilde{F}^{ij}u_{ij\xi\xi}-\widetilde{F}^{ij}a_{ij}^{k}u_{k}
-2\widetilde{F}^{ij}a_{i}^{k}u_{kj}-\widetilde{F}^{ij}a^{k}u_{ijk}-\widetilde{F}^{ij}b_{ij}\notag\\
&+2B\sum_i\tilde{F}^{ii}+2\widetilde{F}^{ij}u_{ik}u_{jk}
+2\widetilde{F}^{ij}u_{ijk}u_{k}\notag\\
\geq& -C(1+\sum_i\tilde{F}^{ii})+2B\sum_i\tilde{F}^{ii}
+2\widetilde{F}^{ii}u_{ii}^{2}-C\widetilde{F}^{ii}|u_{ii}|\label{(5.17)},
\end{align*}
where $C$ is a positive constant depending on
 $M_1,  |\varphi|_{C^{3}}, |f^{\frac{1}{k-1}}|_{C^{2}}$ and $\Omega$.
Combining with Proposition \ref{P4} (\romannumeral3), it is easy to get
$$0\geq\widetilde{F}^{ij}H_{ij}\geq2\widetilde{F}^{ii}\left(u_{ii}-\frac{C}{4}\right)^{2}
+\left(2B-\frac{C^{2}}{8}-C\right){\sum_i\tilde{F}^{ii}}-C>0,$$
if we choose $B>\frac{C^{2}}{8}+C+\frac{C}{ C_\mathbf{p}}$, where $C_\mathbf{p}$ is a constant defined in Proposition \ref{P4} (\romannumeral3). This is a contradiction.

\textbf{Case 2: }$x_0\in \partial \Omega$. We consider two subcases for $\xi_{0}$.

\textbf{Subcase 2.1: }$\xi_{0}$ is tangential at $x_{0}\in\partial\Omega$. We directly have $\xi_{0}\cdot\nu=0$,  $\nu=-Dd$ with $d(x) = \text{dist}(x, \partial\Omega)$, $v(x_{0},\xi_{0})=0$. Without loss of generality, we may assume that  $u_{\xi_{0}\xi_{0}}(x_{0})>0$; otherwise, the proof is complete. Using the method of \cite {MQ}, define
\begin{equation*}\label{(5.18)}
c^{ij}=\delta_{ij}-\nu^{i}\nu^{j}\quad\mathrm{in}\: \Omega,
\end{equation*}
clearly  $c^{ij}D_{j}$ is a tangential direction on $\partial\Omega$.
From the boundary condition,
\begin{equation*}\label{(5.619)}
u_{kj}\nu^{k}+u_{k}D_{j}\nu^{k}=-\varepsilon u_{j}+\varphi_j.
\end{equation*}
Multiplying both sides by $c^{ij}$ gives
\begin{equation}\label{(5.19)}
u_{ki}\nu^{k}=-\varepsilon c^{ij}u_{j}+c^{ij}\varphi_j-c^{ij}u_{k}D_{j}\nu^{k}+\nu^{i}\nu^{j}\nu^{k}u_{kj}.
\end{equation}
Differentiating \eqref{(5.19)} once more yields
\begin{equation}\label{(5.110)}
u_{kiq}\nu^{k}+u_{ki}D_q\nu^k
=D_{q}(-\varepsilon c^{ij}u_{j}+c^{ij}\varphi_{j}-c^{ij}u_{k}D_{j}\nu^{k}+\nu^{i}\nu^{j}\nu^{k}u_{kj}),
\end{equation}
Then, multiplying \eqref{(5.110)} by $c^{pq}$, we obtain
\begin{equation*}
\begin{aligned}
u_{kip}\nu^{k}
=&c^{pq}D_{q}(-\varepsilon c^{ij}u_{j}+c^{ij}\varphi_{j}-c^{ij}u_{k}D_{j}\nu^{k}+\nu^{i}\nu^{j}\nu^{k}u_{kj})
-c^{pq}u_{ki}D_{q}\nu^{k}+\nu^{p}\nu^{q}\nu^{k}u_{kiq}\\
=&D_{p}(-\varepsilon c^{ij}u_{j}+c^{ij}\varphi_{j}-c^{ij}u_{k}D_{j}\nu^{k}+\nu^{i}\nu^{j}\nu^{k}u_{kj})-u_{ki}D_{p}\nu^{k}.
\end{aligned}
\end{equation*}
Hence,
\begin{equation}\label{(5.112)}
\begin{aligned}
u_{\xi_{0}\xi_{0}\nu} &=\sum_{i,k,p=1}^{n}\xi_{0}^{i}\xi_{0}^{p}u_{kip}\nu^{k} \\
&=\sum_{i,k,p=1}^{n}\xi_{0}^{i}\xi_{0}^{p}[D_{p}(-\varepsilon c^{ij}u_{j}+c^{ij}\varphi_{j}-c^{ij}u_{k}D_{j}\nu^{k}+\nu^{i}\nu^{j}\nu^{k}u_{kj})-u_{ki}D_{p}\nu^{k}] \\
&\leq-2\xi_{0}^{i}\xi_{0}^{p}u_{ki}D_{p}\nu^{k}+C(1+|u_{\nu\nu}|).
\end{aligned}
\end{equation}
Note that  the assumption $u_{\xi_{0}\xi_{0}}(x_{0})>0$ is used in the last line.

After a suitable rotation, we may take $\xi_{0}=(1,\ldots,0)=e_{1}$ and $\xi(t)={\frac{(1,t,0,\ldots,0)}{\sqrt{1+t^{2}}}}$, then
\begin{equation*}\label{(5.113)}
0=\left.\frac{dH(x_0,\xi(t))}{dt}\right|_{t=0}=2u_{12}(x_0)-2\nu^2(-\varepsilon u_1+\varphi_1-D_kuD\nu^k).
\end{equation*}
Therefore,
\begin{equation*}
|u_{12}|(x_0)\leq C.
\end{equation*}
Similarly, $|u_{1i}|(x_0)\leq C$ holds for any $i>1$. According to \eqref{5.552} in Lemma \ref{c26}, we get $D_{1}\nu^{1}>\kappa_{\min}$, since $\Omega$  is strictly convex.
Then \eqref{(5.112)} becomes to
\begin{equation}\label{(5.115)}
u_{\xi_{0}\xi_{0}\nu}\leq-2\kappa_{\min}u_{11}+C(1+|u_{\nu\nu}|).
\end{equation}
Moreover,  from the Hopf lemma  and \eqref{(5.115)},
\begin{equation*}
\begin{aligned}
0 &\leq H_{\nu}(x_{0},\xi_{0}) \\
&=u_{\xi_{0}\xi_{0}\nu}-D_{\nu}a^{k}u_{k}-a^{k}u_{k\nu}-D_{\nu}b+2Bx_{k}\nu^{k}+2u_{k}u_{k\nu} \\
&\leq-2\kappa_{\mathrm{min}}u_{\xi_{0}\xi_{0}}+C(1+|u_{\nu\nu}|).
\end{aligned}
\end{equation*}
It follows that
\begin{equation*}
u_{\xi_0\xi_0}(x_0)\leq C(1+|u_{\nu\nu}|).
\end{equation*}

\textbf{Subcase 2.2: } $\xi_{0}$ is non-tangential.
Write $\xi_{0}=\alpha\tau+\beta\nu$, where $\tau\in\mathbb{S}^{n-1}$ is tangential at $x_0$, that is $\langle\tau,\nu\rangle=0$, $\alpha=\langle\xi_{0},\tau\rangle>0$, $\beta=\langle\xi_{0},\nu\rangle<1$ and $\alpha^{2}+\beta^{2}=1$. Combining the boundary condition and the definition of $v(x_{0},\xi_{0})$, we obtain
\begin{equation*}
\begin{aligned}
u_{\xi_{0}\xi_{0}}(x_{0})&=\alpha^{2}u_{\tau\tau}(x_{0})+\beta^{2}u_{\nu\nu}(x_{0})+2\alpha\beta u_{\tau\nu}(x_{0})\\
&=\alpha^{2}u_{\tau\tau}(x_{0})+\beta^{2}u_{\nu\nu}(x_{0})+2(\xi_{0}\cdot\nu)\xi_{0}^{\prime}\cdot(D\varphi-\varepsilon Du-u_{l}D\nu^{l})\\
&=\alpha^{2}u_{\tau\tau}(x_{0})+\beta^{2}u_{\nu\nu}(x_{0})+v(x_{0},\xi_{0}).
\end{aligned}
\end{equation*}
Therefore,
\begin{equation*}
\begin{aligned}
H(x_{0},\xi_{0})& =\alpha^{2}H(x_{0},\tau)+\beta^{2}H(x_{0},\nu) \\
&\leq\alpha^{2}H(x_{0},\xi_{0})+\beta^{2}H(x_{0},\nu).
\end{aligned}
\end{equation*}
Further calculation yields
\begin{equation*}
H(x_0,\xi_0)\leq H(x_0,\nu)=u_{\nu\nu}+B|x|^2+|Du|^2.
\end{equation*}
It implies that
\begin{equation*}
u_{\xi_0\xi_0}(x_0)\leq|u_{\nu\nu}|+C.
\end{equation*}
In conclusion, the proof is complete.
\end{proof}

\begin{theorem}\label{C22}
Let~$u\in C^4(\Omega) \cap C^3(\overline\Omega)$~be the $(\Lambda,k)$-admissible solution of equation ~\eqref{(1.7)}. Denoting the tangential direction $\tau$ and the outer unit normal $\nu$ at any point $y \in \partial \Omega$,
then
$$
|D_{\tau\nu}u(y)|\leq C,
$$
where $C$ is a positive constant depending on $n, k, l, \mathbf{p}, |u|_{C^1}, |\varphi|_{C^1}$ and $\Omega$.
\end{theorem}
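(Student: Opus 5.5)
The estimate follows by differentiating the Neumann boundary condition tangentially, with no need for the equation itself.

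Fix $y\in\partial\Omega$ and a unit tangent vector $\tau$ at $y$. Extend $\nu$ to a $C^{2}$ (indeed $C^3$, since $\partial\Omega\in C^4$) unit vector field near $\partial\Omega$, for instance $\nu=-Dd$ with $d$ the distance function. The boundary condition in \eqref{(1.7)} reads $u_k\nu^k=-\varepsilon u+\varphi$ on $\partial\Omega$. Both sides are functions on $\partial\Omega$, and $\tau$ is tangent to $\partial\Omega$ at $y$, so we may differentiate the identity in the direction $\tau$ along the boundary. This gives
\begin{equation*}
\tau^j\nu^k u_{kj}+\tau^j u_k D_j\nu^k=-\varepsilon u_\tau+\varphi_\tau \quad\mbox{at } y,
\end{equation*}
which is exactly the relation \eqref{(5.619)} used earlier, contracted with $\tau$ (equivalently, with $c^{ij}$ replaced by the projection onto $\tau$).

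Rearranging, we get
\begin{equation*}
D_{\tau\nu}u(y)=-\varepsilon u_\tau+\varphi_\tau-\tau^j u_kD_j\nu^k .
\end{equation*}
Each term on the right is now bounded. First, $|\varepsilon u_\tau|\le |Du|\le |u|_{C^1}$, since $\varepsilon\in(0,1)$. Second, $|\varphi_\tau|\le|\varphi|_{C^1}$, where $\varphi$ is understood as extended to a neighbourhood (or $\varphi_\tau$ is read as a tangential derivative). Third, $|\tau^ju_kD_j\nu^k|\le |Du|\,|D\nu|$, and $|D\nu|$ on $\partial\Omega$ is controlled by the principal curvatures of $\partial\Omega$, hence depends only on $\Omega$. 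Together these give $|D_{\tau\nu}u(y)|\le C(|u|_{C^1},|\varphi|_{C^1},\Omega)$. In the subsequent argument one uses the gradient bound of Theorem \ref{C1}, which gives $|u|_{C^1}$ independent of $\varepsilon$ only through $M_1$. Note that the unbounded quantity $\varepsilon u$ never appears here, only $\varepsilon Du$.

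The only point needing care is justifying the tangential differentiation. The identity $u_\nu+\varepsilon u-\varphi=0$ holds on $\partial\Omega$ only, so only derivatives along $\partial\Omega$ may be taken. This is legitimate because $\tau$ is tangent and $u\in C^{3}(\overline\Omega)\supset C^2(\overline\Omega)$. A second point is that $D_{\tau\nu}u$ here means $\tau^i\nu^j u_{ij}$, the Hessian applied to $\tau$ and $\nu$, rather than $D_\tau(D_\nu u)$. The two differ exactly by the term $u_kD_\tau\nu^k$, which is bounded as above, so either interpretation yields the statement.
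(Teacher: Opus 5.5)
Your argument is correct and is the standard one the paper defers to (the lemma of Ma--Qiu \cite{MQ}). You differentiate $u_k\nu^k=-\varepsilon u+\varphi$ along $\tau$ to get $\tau^j\nu^k u_{kj}=-\varepsilon u_\tau+\varphi_\tau-\tau^j u_kD_j\nu^k$, the same identity the paper uses in the proof of Theorem \ref{C211}, and then bound each term by $|Du|$, $|D\varphi|$ and $|D\nu|$. There are two harmless slips: the inclusion should read $C^3(\overline\Omega)\subset C^2(\overline\Omega)$, and the quantity not controlled uniformly in $\varepsilon$ is $u$ itself, since $\varepsilon u$ is bounded by $M_0$ from Theorem \ref{C0}.
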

\begin{proof}
The idea of the proof is similar to the proof of Lemma 12 in \cite{MQ}.
\end{proof}

\subsection{ Double normal estimates on the boundary}\

\

In this section, we  prove $\underset{\partial\Omega}{\max}|D_{\nu\nu}u|\leq C$, this requires considering two cases:
$\underset{\partial\Omega}{\max}|u_{\nu\nu}|= \underset{\partial\Omega}{\max}~u_{\nu\nu}$ and
$\underset{\partial\Omega}{\max}|u_{\nu\nu}|= -\underset{\partial\Omega}{\min}~u_{\nu\nu}$. Firstly, we denote
$$h(x) = -d(x) + d^2(x),$$
where $d(x) = \text{dist}(x, \partial\Omega)$ is the distance function of $\Omega$. For a sufficiently small constant \(\mu>0\), define
$$\Omega_\mu = \{x \in \Omega :0< d(x) < \mu\},$$
then we  give the following Lemma.
\begin{lemma}\label{c26}
Suppose $\Omega \subset \mathbb{R}^n$ is a $C^2$  strictly convex domain, and $u \in C^2(\Omega)$ is the $(\Lambda, k)$-admissible solution of Hessian quotient equation ~\eqref{(1.7)}.
Then
$$\sum_{i} \tilde{F}^{ii} h_{ii} \ge c_0 \left(\sum_{i} \tilde{F}^{ii} + 1\right), \quad \text{in} \ \Omega_\mu, $$
where $\mu \in (0, \frac{1}{10})$  depending only on $\Omega$ and $c_0$  depending  on $n, k, l, \mathbf{p}, \Omega$.

\end{lemma}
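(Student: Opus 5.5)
We follow the standard computation for the distance function near a strictly convex boundary and combine it with the concavity and the trace lower bound in Proposition \ref{P4} (\romannumeral3).

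\textbf{Step 1: the Hessian of $h$.} Since $\partial\Omega\in C^2$ is strictly convex, $d$ is $C^2$ in $\Omega_\mu$ for $\mu$ small depending only on $\Omega$. At any $x\in\Omega_\mu$ choose coordinates so that $Dd=-\nu$ points along $e_n$. Then
\[
-D^2 d=\mathrm{diag}\Big(\tfrac{\kappa_1}{1-\kappa_1 d},\dots,\tfrac{\kappa_{n-1}}{1-\kappa_{n-1}d},0\Big),
\]
where $\kappa_i\ge\kappa_{\min}>0$ are the principal curvatures at the nearest boundary point. Hence
\[
D^2h=-(1-2d)D^2d+2Dd\otimes Dd .
\]
For $\mu<\frac1{10}$ we have $1-2d\ge\frac45$, so $D^2h\ge c_1 I$ with $c_1=\min\{\frac45\kappa_{\min},2\}>0$. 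This step also gives the inequality $D_1\nu^1\ge\kappa_{\min}$ used earlier, since on $\partial\Omega$ we have $D\nu=-D^2d$.

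\textbf{Step 2: positivity of $\tilde F^{ij}$ and the first term.} The matrix $(\tilde F^{ij})$ is positive semidefinite. This follows because $\tilde F$ is concave and increasing in $D^2u$ on the admissible set: by Proposition \ref{P4}(\romannumeral2),(\romannumeral3), each $\partial\tilde F/\partial\lambda_i\ge0$, since $\partial[\sigma_k/\sigma_l]/\partial\Lambda_I>0$ in $\tilde\Gamma_k$. Consequently
\[
\tilde F^{ij}h_{ij}\ge c_1\sum_i\tilde F^{ii}\ge \frac{c_1}{2}\sum_i\tilde F^{ii}+\frac{c_1}{2}C_{\mathbf p},
\]
where $C_{\mathbf p}$ is the lower bound $\sum_i\tilde F^{ii}\ge C_{\mathbf p}$ from Proposition \ref{P4}(\romannumeral3). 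This bound is invariant under rotations, because the trace of $(\tilde F^{ij})$ equals $\sum_i\partial\tilde F/\partial\lambda_i$.

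\textbf{Step 3: conclusion.} Taking $c_0=\frac{c_1}{2}\min\{1,C_{\mathbf p}\}$ gives the claim. This constant depends only on $n,k,l,\mathbf p$ and $\Omega$, and it is independent of $f$ and $\varepsilon$.

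The only potential obstacle is the regularity and eigenvalue formula for $d$ in $\Omega_\mu$, which needs the boundary to be $C^2$ and $\mu$ below the reach of $\partial\Omega$. This is classical (Gilbarg--Trudinger, Lemma 14.17), so the argument is routine.
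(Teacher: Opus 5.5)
Your proof is correct, and it takes a shorter route than the paper's.

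\textbf{Your route.} You treat $D^2h\ge c_1 I$ as a matrix inequality. Combined with $(\tilde F^{ij})\ge 0$, this gives $\tilde F^{ij}h_{ij}\ge c_1\sum_i\tilde F^{ii}$ in one step. You then absorb the $+1$ using the trace bound of Proposition \ref{P4}(iii).

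\textbf{The paper's route.} It diagonalizes $D^2u(x_0)$ and discards every term except $\tilde F^{nn}h_{nn}$. It then bounds $\tilde F^{nn}$ from below through a chain of $\sigma$-function estimates: the lower bound \eqref{5.51} for the quotient derivatives, monotonicity of $\sigma_m(\Lambda|I)$ in $\Lambda_I$, and Proposition \ref{P1}(iv). It also needs an upper bound for $\sum_i\tilde F^{ii}$ in terms of $\sigma_{k-1}/\sigma_l$ before invoking Proposition \ref{P4}(iii). That chain only serves to show $\tilde F^{nn}\ge c\sum_i\tilde F^{ii}$. This already follows from Proposition \ref{P4}(ii), since $\tilde F^{nn}$ is the largest diagonal entry, and your argument avoids it entirely by keeping all the terms.

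\textbf{What your version gains.} It is coordinate-free. The paper implicitly treats the principal-curvature frame (where $D^2h$ is diagonal) and the frame diagonalizing $D^2u(x_0)$ as the same; this is harmless only because \eqref{5.53} is a matrix inequality. You also keep track of the factor $1-2d$ in $D^2h=-(1-2d)D^2d+2Dd\otimes Dd$, which the paper's stated bound $D^2h\ge\kappa_{\min}I$ glosses over.

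\textbf{One small citation point.} Proposition \ref{P4}(ii),(iii), as stated, do not literally assert $\partial[\sigma_k/\sigma_l]/\partial\Lambda_I>0$. The nonnegativity you need is nevertheless standard, and it follows from the concavity in (iii). If $\partial_i\tilde F(\lambda)<0$, then $\tilde F(\lambda+te_i)\le\tilde F(\lambda)+t\,\partial_i\tilde F(\lambda)\to-\infty$. This is impossible, because $\lambda+te_i$ stays in $\mathcal{P}_{\mathbf{p},k}$ for all $t>0$, where $\tilde F>0$.
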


\begin{proof}
From \cite{GT},  $d$ is $C^4$ in $\Omega_\mu $ and it holds
$$|Dd| = 1 \quad \text{in } \bar{\Omega}_\mu; \quad -Dd = \nu \quad \text{on } \partial\Omega_\mu.$$
For any $x_0 \in \Omega_\mu$, there is a $y_0 \in \partial\Omega$ such that $d(x_0) = |x_0 - y_0|$. Moreover, in the principal coordinate system,
$$
-Dd(x_0) = \nu(y_0) = (0, \dots, 0, 1),$$
$$-D^2d(x_0) = \text{diag}\left\{\frac{\kappa_1(y_0)}{1 - \kappa_1(y_0)d(x_0)}, \dots, \frac{\kappa_{n-1}(y_0)}{1 - \kappa_{n-1}(y_0)d(x_0)}, 0\right\},$$
where $\kappa_1(y_0), \dots, \kappa_{n-1}(y_0)$ are the principal curvature of $\partial\Omega$ at $y_0$. Since $\Omega$ is strictly convex, then there exists two positive constants $\kappa_{\min} < 1$ and $\kappa_{\max}$ depending only on $\Omega$ and $\mu$ such that
\begin{equation}\label{5.552}
\begin{aligned}
\kappa_{\min}\text{diag}\{1, \dots, 1, 0\} \leq -D^2d(x_0) \leq \kappa_{\max}\text{diag}\{1, \dots, 1, 0\},
 \end{aligned}
\end{equation}
in the principal coordinate system. Hence
\begin{equation}\label{5.53}
\begin{aligned}
\kappa_{\min} \operatorname{diag}\{1, \ldots, 1, 1\} \leq D^{2} h(x_{0}) \leq\left(\kappa_{\max}+1\right) \operatorname{diag}\{1, \ldots, 1, 1\}
 \end{aligned}
\end{equation}
in the principal coordinate system.

If $D^{2} u(x_{0})$ is diagonal, and denote $\lambda=(\lambda_{1}, \ldots, \lambda_{n})$ with $\lambda_{i}=u_{ii}$. Assume $\lambda_{1} \geq \lambda_{2} \geq \cdots \geq \lambda_{n}$, then $\Lambda_{I_1}\geq \cdots\geq \Lambda_{I_N}$. By a direct computation (details can refer to \cite{ZJ}), we obtain
\begin{eqnarray}\label{5.51}
\begin{aligned}
\tilde{F}^{ii}=&\frac{1}{k-l}\left[\frac{\sigma_k(\Lambda)}{\sigma_l(\Lambda)}\right]^{\frac{1}{k-l}-1}\sum_{\{I|i\in I\}}\frac{\sigma_{k-1}(\Lambda|\Lambda_{I})\sigma_{l}(\Lambda)-\sigma_{k}(\Lambda)\sigma_{l-1}(\Lambda|\Lambda_{I})}{\sigma_{l}^{2}(\Lambda)} \\
\geq&\frac{(N+1)}{k(N-l+1)}\left[\frac{\sigma_k(\Lambda)}{\sigma_l(\Lambda)}\right]^{\frac{1}{k-l}-1}\sum_{\{I|i\in I\}}\frac{\sigma_{k-1}(\Lambda|\Lambda_{I})\sigma_{l}(\Lambda|\Lambda_{I})}{\sigma_{l}^{2}(\Lambda)},
 \end{aligned}
\end{eqnarray}
and
\begin{equation*}\label{5.52}
\begin{aligned}
 \sum_{i}\tilde{F}^{ii}
& = \frac{1}{k-l}\left[\frac{\sigma_{k}(\Lambda)}{\sigma_{l}(\Lambda)}\right]^{\frac{1}{k-l}-1} \sum_{i=1}^{n} \sum_{\{I|i\in I\}} \frac{\sigma_{k-1}(\Lambda|\Lambda_{I}) \sigma_{l}(\Lambda) - \sigma_{k}(\Lambda) \sigma_{l-1}(\Lambda|\Lambda_{I})}{\sigma_{l}^{2}(\Lambda)} \\
& \leq \frac{\mathbf{p}(N-k+1)}{k-l}\left[\frac{\sigma_{k}(\Lambda)}{\sigma_{l}(\Lambda)}\right]^{\frac{1}{k-l}-1} \frac{\sigma_{k-1}(\Lambda)}{\sigma_{l}(\Lambda)}.
\end{aligned}
\end{equation*}
From \eqref{5.53} and \eqref{5.51}, it follows that
\begin{eqnarray*}
\begin{aligned}
 \sum_{i}\tilde{F}^{ii}h_{ii} \geq \tilde{F}^{nn}h_{nn}
 \geq&\frac{(N+1)\kappa_{\min}}{k(N-l+1)}\left[\frac{\sigma_{k}(\Lambda)}{\sigma_{l}(\Lambda)}\right]^{\frac{1}{k-l}-1}\sum_{\{I|n\in I\}}\frac{\sigma_{k-1}(\Lambda|\Lambda_{I})\sigma_{l}(\Lambda|\Lambda_{I})}{\sigma_{l}^{2}(\Lambda)}\\
 \geq&\frac{(N+1)\kappa_{\min}}{k(N-l+1)}\left[\frac{\sigma_{k}(\Lambda)}{\sigma_{l}(\Lambda)}\right]^{\frac{1}{k-l}-1}
 \frac{\sigma_{k-1}(\Lambda|\Lambda_{I_N})\sigma_{l}(\Lambda|\Lambda_{I_N})}{\sigma_{l}^{2}(\Lambda)}\\
 \geq&\frac{(N+1)\kappa_{\min}}{k(N-l+1)}\left[\frac{\sigma_{k}(\Lambda)}{\sigma_{l}(\Lambda)}\right]^{\frac{1}{k-l}-1}
 \frac{\sigma_{k-1}(\Lambda|\Lambda_{I_k})\sigma_{l}(\Lambda|\Lambda_{I_{l+1}})}{\sigma_{l}^{2}(\Lambda)}.\\
 \end{aligned}
\end{eqnarray*}
Using  Proposition~\ref{P1} (\romannumeral4), we derive
$$\sigma_{k-1}(\Lambda|\Lambda_{I_k})\geq c(n,k,\mathbf{p})\sigma_{k-1}(\Lambda), \quad
 \sigma_{l}(\Lambda|\Lambda_{I_{l+1}})\geq c(n,l,\mathbf{p})\sigma_{l}(\Lambda). $$
Combining  the above inequalities with Proposition~\ref{P4}(\romannumeral3),
\begin{eqnarray*}
\begin{aligned}
 \sum_{i}\tilde{F}^{ii}h_{ii}
 \geq&\frac{(N+1)\kappa_{\min}}{k(N-l+1)}\left[\frac{\sigma_{k}(\Lambda)}{\sigma_{l}(\Lambda)}\right]^{\frac{1}{k-l}-1}c(n,k,l,\mathbf{p})\frac{\sigma_{k-1}(\Lambda)}{\sigma_{l}(\Lambda)}\\
 \geq&\frac{(N+1)(k-l)\kappa_{\min}}{\mathbf{p}k(N-k+1)(N-l+1)}c(n,k,l,\mathbf{p}) \sum_{i}\tilde{F}^{ii}\\
  \geq&c_0(\sum_{i}\tilde{F}^{ii}+1).
 \end{aligned}
\end{eqnarray*}
\end{proof}


Next, we prove the upper estimate of double normal second-order derivatives on the boundary as following.
\begin{lemma}\label{c23}
Suppose that $\Omega \subset \mathbb{R}^n$ is a $C^3$  strictly convex domain,
$f$ is a positive function with $f^{\frac1{k-l}}\in C^2(\overline{\Omega})$
 and $u \in C^3(\Omega) \cap C^2(\overline{\Omega})$ is the $(\Lambda, k)$-admissible solution of Hessian quotient equation ~\eqref{(1.7)}, then
\begin{equation*}
\max_{\partial \Omega} u_{\nu\nu} \leq C,
\end{equation*}
where $C$ depends on $n, k, l, \mathbf{p}, \Omega,  |f^{\frac{1}{k-l}}|_{C^1}$ and $|\varphi|_{C^2}$.
\end{lemma}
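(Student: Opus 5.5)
We plan the standard barrier argument of Ma--Qiu / Chen--Zhang in the strip $\Omega_\mu$, built on the function $h$ of Lemma \ref{c26}. Let $M=\max_{\partial\Omega}u_{\nu\nu}$ and assume $M>0$ is large, attained at $x_0\in\partial\Omega$. In $\Omega_\mu$ consider
\begin{equation*}
P(x)=Du\cdot(-Dd)-\bigl(-\varepsilon u+\varphi(x)\bigr)-\Bigl(A+\tfrac{M}{2}\Bigr)h(x),
\end{equation*}
where $-Dd$ extends $\nu$ and $A$ is a large constant to be fixed. On $\partial\Omega$ we have $h=0$ and $P=0$ by the boundary condition of \eqref{(1.7)}. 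On $\{d=\mu\}$ we have $h=-\mu+\mu^2<0$, so $-h\geq\mu/2$. Theorem \ref{C0} and Theorem \ref{C1} then give $P\geq -C+(A+M/2)\mu/2\geq 0$ once $A$ is large, independently of $M$. We aim to show $P\geq0$ in $\Omega_\mu$. Since $P(x_0)=0$, this yields $P_\nu(x_0)\leq 0$. At $x_0$ we have $h_\nu=1$ and $\nu=-Dd$, so
\begin{equation*}
u_{\nu\nu}-C-\Bigl(A+\tfrac M2\Bigr)\leq 0,
\end{equation*}
which gives $M\leq 2(A+C)$.

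To prove $P\geq0$, suppose $P$ attains a negative minimum at an interior point $y\in\Omega_\mu$. At $y$ we rotate so that $D^2u(y)$ is diagonal and compute $\tilde F^{ij}P_{ij}$. The third-derivative terms are handled by differentiating the equation $\tilde F(D^2u)=f^{1/(k-l)}$: the term $-d_k\tilde F^{ij}u_{ijk}$ becomes $-d_k(f^{1/(k-l)})_k$, which is bounded by $|f^{1/(k-l)}|_{C^1}$. The terms $\varepsilon\tilde F^{ij}u_{ij}$ and $\tilde F^{ij}\varphi_{ij}$ are bounded using homogeneity, namely $\tilde F^{ij}u_{ij}=\tilde F\leq C$, together with $C\sum\tilde F^{ii}$. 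The remaining terms are
\begin{equation*}
-2\tilde F^{ij}d_{ki}u_{kj}-\tilde F^{ij}d_{kij}u_k,
\end{equation*}
and the second is bounded by $C\sum\tilde F^{ii}$. Lemma \ref{c26} supplies the good term $-(A+M/2)c_0(\sum\tilde F^{ii}+1)$. We therefore reach a contradiction with $\tilde F^{ij}P_{ij}\geq0$, which holds at a minimum, provided we can show
\begin{equation*}
\Bigl|2\tilde F^{ii}d_{ii}u_{ii}\Bigr|\leq \tfrac12\Bigl(A+\tfrac M2\Bigr)c_0\sum\tilde F^{ii}+C.
\end{equation*}
Here we use that $d_{ki}u_{kj}$ only involves $u_{ii}$ after a further rotation, or is bounded via Cauchy--Schwarz.

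The main obstacle is this term $\tilde F^{ij}d_{ki}u_{kj}$, since $u_{ii}$ is not a priori bounded. Its positive part (when $u_{ii}>0$, with $-d_{ii}\geq0$) has a favourable sign for a minimum argument, because $-2\tilde F^{ii}d_{ii}u_{ii}\geq 0$ contributes to $\tilde F^{ij}P_{ij}$ with the right sign only if combined correctly. We would first try to use the first-order condition $P_i(y)=0$, which expresses $-d_k u_{ki}$ in terms of bounded quantities plus $(A+M/2)h_i$. This lets us replace $u_{nn}$ (the normal direction) and control it by $C(1+M)$. For the tangential eigenvalues we use the sign: $-d_{ii}\in[\kappa_{\min},\kappa_{\max}]$ for $i<n$, so $-2\tilde F^{ii}d_{ii}u_{ii}\geq -2\kappa_{\max}\tilde F^{ii}u_{ii}^-$.

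The negative parts are controlled by $\sum_i\tilde F^{ii}|u_{ii}|\leq \sum\tilde F^{ii}u_{ii}+2\sum_{u_{ii}<0}\tilde F^{ii}|u_{ii}|$, and the first sum equals $\tilde F\leq C$. For the negative eigenvalues we would rely on Lemma \ref{f11}. The bound $|u_{ii}|\leq C(1+M)$ from Theorem \ref{C211} makes $\tilde F^{ii}|u_{ii}|\lesssim (1+M)\sum \tilde F^{jj}$ only with a constant worse than $c_0$. So the final choice is to replace the coefficient of $h$ by a larger multiple $(A+\beta M)$, with $\beta$ to be determined. If that is insufficient, we would multiply $P$'s barrier term to include $|Du|^2$-type corrections, following Chen--Zhang \cite{CZ}.
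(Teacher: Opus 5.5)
There is a genuine gap. Your skeleton is right: a barrier in $\Omega_\mu$ built from $h$, vanishing on $\partial\Omega$, nonnegative on $\{d=\mu\}$, with $u_{\nu\nu}(z_0)$ read off from $P_\nu(z_0)\le0$. You also correctly single out $-2\tilde F^{ii}d_{ii}u_{ii}$ as the obstacle. But the proposal never overcomes it.

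\textbf{Your sign bookkeeping is reversed.} At an interior minimum you need $\tilde F^{ij}P_{ij}<0$. Since $-d_{ii}\ge0$, it is the \emph{positive} eigenvalues that hurt: $-2\tilde F^{ii}d_{ii}u_{ii}\le 2\kappa_{\max}\tilde F^{ii}u_{ii}$ for $u_{ii}>0$. Your lower bound $\ge-2\kappa_{\max}\tilde F^{ii}u_{ii}^-$ points the useless way. Also, in the frame diagonalizing $D^2u(y)$ you only know $0\le -d_{ii}\le\kappa_{\max}$, not $-d_{ii}\ge\kappa_{\min}$ for $i<n$; that bound holds in the principal frame of $d$, which is a different frame.

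\textbf{The bad term cannot be absorbed.} By the equation it is at most $2\kappa_{\max}\bigl(f^{\frac1{k-l}}+\sum_{u_{ii}<0}\tilde F^{ii}|u_{ii}|\bigr)\le C+2\kappa_{\max}M_2(1+M)\sum_i\tilde F^{ii}$. The only good term in your $P$ is $(A+\frac M2)c_0\sum_i\tilde F^{ii}$, and $c_0$ is small while $M_2$ is large.

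\textbf{Your proposed remedy breaks the last step.} If you replace $\frac M2$ by $\beta M$ with $\beta$ large, then $P_\nu(z_0)\le0$ only gives $M-C-A-\beta M\le0$, which is empty for $\beta\ge1$. The coefficient of $M$ in front of $h$ must stay below $1$, so this mechanism can never beat $M_2$. The $|Du|^2$-type correction is too unspecified to supply what is missing.

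\textbf{Lemma \ref{f11} is the wrong tool here.} It controls $\tilde F^{ii}$ in a negative direction and drives the lower bound, Lemma \ref{c24}. For the upper bound the useful direction is a large positive eigenvalue.

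\textbf{The missing idea is the multiplicative weight.} The paper uses
$P=(1+\beta d)\bigl[Du\cdot(-Dd)+\varepsilon u-\varphi\bigr]-(A+\frac12M)h$. Differentiating twice produces the extra term $-2\beta\sum_i\tilde F^{ii}u_{ii}d_i^2$, whose coefficient $\beta$ is decoupled from the coefficient of $M$.

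Split the indices into $G=\{\beta d_i^2\ge\frac1n\}$ and $B=G^c$, taking $\mu\le1/\beta$. Then $G$ contains some index, say $1$, with $d_1^2\ge\frac1n$. On $G$ the first-order condition $P_i=0$ gives $\frac{A+M}5\le u_{ii}\le\frac{6A}5+\frac M2$. Hence all negative eigenvalues lie in $B$, and the new term is at most $-\frac{2\beta}{n}\tilde F^{11}u_{11}-\frac2n\sum_{u_{ii}<0}\tilde F^{ii}u_{ii}$.

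One then argues by cases, with $\epsilon=\frac{c_0}{3(4\kappa_{\max}+2/n)}$:
\begin{enumerate}
\item If all $u_{ii}\ge0$, the bad terms total at most $4\kappa_{\max}f^{\frac1{k-l}}$.
\item If $-u_{nn}\le\epsilon u_{11}$, the upper bound on $u_{11}$ caps the bad terms at $4\kappa_{\max}f^{\frac1{k-l}}+c_0(\frac{2A}5+\frac M6)\sum_i\tilde F^{ii}$. This is absorbed by $(A+\frac M2)c_0$.
\item If $-u_{nn}>\epsilon u_{11}$, then $u_{11}\ge\frac{A+M}5$ and $u_{22}\le M_2(1+M)$ (Theorem \ref{C211}) give $u_{11}\ge\frac1{5M_2}u_{22}$. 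Lemma \ref{l2} then yields $\tilde F^{11}\ge C_2\sum_i\tilde F^{ii}$, with $C_2$ independent of $\beta$. So $-\frac{2\beta C_2}{5n}(A+M)\sum_i\tilde F^{ii}$ absorbs $(4\kappa_{\max}+\frac2n)M_2(1+M)\sum_i\tilde F^{ii}$ once $\beta$ is large.
\end{enumerate}

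This is exactly where the paper's new Lemma \ref{l2} enters. Without the weight, the first-order information on $G$, and Lemma \ref{l2}, the interior contradiction cannot be reached.
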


\begin{proof}
Without loss of generality, we assume $\max_{\partial\Omega} u_{\nu\nu} > 0$. If $\max_{\partial\Omega} u_{\nu\nu} \leq -\min_{\partial\Omega} u_{\nu\nu}$, (i.e. $\max_{\partial\Omega} |u_{\nu\nu}| = -\min_{\partial\Omega} u_{\nu\nu}$), the result follows immediately from Lemma~\ref{c24},
$$\max_{\partial\Omega} u_{\nu\nu} \leq -\min_{\partial\Omega} u_{\nu\nu} \leq C.$$
In the following, we assume $\max_{\partial\Omega} u_{\nu\nu} > -\min_{\partial\Omega} u_{\nu\nu}$. Denote $M =\max_{\partial\Omega} u_{\nu\nu}> 0$ and let ${z}_0 \in \partial\Omega$ such that $\max_{\partial\Omega} u_{\nu\nu} = u_{\nu\nu}({z}_0)$. Motivated by Chen-Zhang~\cite{CZ},  consider the test function
\begin{equation*}\label{c221}
{P}(x) = \big(1 + \beta d(x)\big)
\Big[ Du(x) \cdot (-Dd(x)) + \varepsilon u(x) - \varphi(x) \Big] - \big(A + \frac{1}{2}M \big)h(x),
\end{equation*}
where $\beta$ and $A$ are positive constants to be determined  later.

Assume that  ${P}$  attains its minimum at some point ${x}_0 \in \Omega_\mu$. By rotating the coordinate system, we may assume that $D^2u({x}_0)$
is diagonal. All subsequent computations are carried out at ${x}_0$. First, we differentiate $P$ twice to obtain
\begin{align}
0 = {P}_i 
&= \beta d_i \big( -\sum_j u_j d_j +\varepsilon u - \varphi \big) + (1 + \beta d) \big( -u_{ii}d_i - \sum_j u_j d_{ji} +\varepsilon u_i - \varphi_i\big)\notag  \\
&\quad - (A + \frac{1}{2}M) h_i\label{c222},
 \end{align}
and
\begin{align}
0 \leq  \tilde{F}^{ii} {P}_{ii}
=&\, \beta \tilde{F}^{ii}d_{ii} \big( -\sum_j u_j d_j +\varepsilon u - \varphi \big)
+ 2\beta \tilde{F}^{ii}d_i \big( -u_{ii}d_i- \sum_j u_j d_{ji} + \varepsilon u_i - \varphi_i \big) \notag \\
&+(1 + \beta d)\tilde{F}^{ii} \big( -\sum_j u_{jii}d_j -2u_{ii} d_{ii}- \sum_j u_j d_{jii} +\varepsilon u_{ii} - \varphi_{ii} \big)\notag\\
&- \big(A + \frac{1}{2}M\big)\tilde{F}^{ii} h_{ii} \notag \\
\leq&\, -2\beta  \tilde{F}^{ii} u_{ii} d_i^2 - 2(1 + \beta d)\tilde{F}^{ii} u_{ii} d_{ii}
+ \Bigl[\beta C_3 -\bigl(A + \frac{1}{2}M\bigr) c_0  \Bigr] \Bigl( \sum_{i} \tilde{F}^{ii} + 1 \Bigr), \label{c224}
\end{align}
where $C_3$ is a constant depending on $M_0, M_1, |\varphi|_{C^2}, |f^\frac{1}{k-l}|_{C^1}$ and $\Omega$, note that we have used Lemma \ref{c26} in the last line.

The set of indices $\{1,2,\ldots,n\}$ is divided into two subsets as follows:
$$B = \left\{ i \big|\, \beta d_i^2 < \frac{1}{n} \right\},~~\quad ~G = B^c = \left\{ i \big|\, \beta d_i^2 \geq \frac{1}{n} \right\},$$
and choose $\beta> 1$, $\mu \leq\frac{1}{\beta}$. It implies $\sum_{i \in B} d_i^2 < 1 = |Dd|^2$,
that means $G$ is not empty, there exists an $i_0 \in G$ such that $d_{i_0}^2\geq d_i^2$ for any $i\in G$, then
\begin{equation}\label{c225}
d_{i_0}^2 \geq \frac{1}{n}|Dd|^2 = \frac{1}{n}.
\end{equation}
 For any $i \in G$, by \eqref{c222}, we get
\begin{equation*}
u_{ii} = \frac{(1 - 2d)\left(A + \frac{1}{2}M\right)}{1 + \beta d}  + \frac{\beta \left(-\sum_j u_j d_j + \varepsilon u - \varphi\right)}{1 + \beta d} - \frac{\sum_j u_j d_{ji} -\varepsilon u_i + \varphi_i}{d_i}.
\end{equation*}
Note that $d_i^2 \geq \frac{1}{n\beta}$, and $1<1+\beta d\leq2$, it holds
\begin{eqnarray}\label{c226}
\begin{aligned}
\left| \frac{\beta \left( -\sum_j u_j d_j + \varepsilon u - \varphi \right)}{1 + \beta d} - \frac{\sum_j u_j d_{ji} - \varepsilon u_i + \varphi_i}{d_i} \right|
 \leq \beta C_4,
 \end{aligned}
\end{eqnarray}
where $C_4$ is a constant depending on $M_0, M_1, n, |\varphi|_{C^1}$ and $\Omega$. Let $A \geq 5\beta C_4$, we derive
\begin{equation}\label{c2226}
\frac{A}{5} + \frac{M}{5} \leq u_{ii} \leq \frac{6A}{5} + \frac{M}{2}, \quad \forall~i \in G.
\end{equation}
From \eqref{c225}, assume $i_0=1$, then
\begin{equation}\label{c227}
\begin{split}
-2\beta \sum_{i } \tilde{F}^{ii} u_{ii} d_i^2 &= -2\beta \sum_{i \in G} \tilde{F}^{ii} u_{ii} d_i^2 - 2\beta \sum_{i \in B} \tilde{F}^{ii} u_{ii} d_i^2 \\
&\leq -2\beta \tilde{F}^{11} u_{11} d_1^2 - 2\beta \sum_{u_{ii} < 0, i\in B} \tilde{F}^{ii} u_{ii} d_i^2 \\
&\leq -\frac{2\beta }{n}\tilde{F}^{11} u_{11} - \frac{2 }{n} \sum_{u_{ii} < 0, i\in B} \tilde{F}^{ii} u_{ii}.
\end{split}
\end{equation}
Moreover,
\begin{equation}\label{c228}
\begin{aligned}
-2(1+\beta d) \sum_{i } \tilde{F}^{ii} u_{ii} d_{ii} \leq -2(1+\beta d) \sum_{u_{ii} \geq 0} \tilde{F}^{ii} u_{ii} d_{ii}
\leq 4\kappa_{\max}\sum_{u_{ii} \geq 0} \tilde{F}^{ii} u_{ii}.
 \end{aligned}
\end{equation}
where $\kappa_{\max}$ is defined  in Lemma \ref{c26}.
Plugging \eqref{c227} and \eqref{c228} into \eqref{c224},
\begin{equation}\label{c2228}
\begin{split}
0\leq  \tilde{F}^{ii} {P}_{ii} \leq &-\frac{2\beta }{n}\tilde{F}^{11} u_{11} - \frac{2 }{n} \sum_{u_{ii} < 0} \tilde{F}^{ii} u_{ii} +4\kappa_{\max}\sum_{u_{ii} \geq 0} \tilde{F}^{ii} u_{ii}\\
&+ \Bigl[\beta C_3 -\bigl(A + \frac{1}{2}M\bigr) c_0  \Bigr] \Bigl( \sum_{i} \tilde{F}^{ii} + 1 \Bigr).\\
\end{split}
\end{equation}
We divide the proof into three cases.  Without loss of generality,  assume that $u_{22} \geq u_{33} \geq \cdots \geq u_{nn}$.

\textbf{Case 1: } $u_{nn} \geq 0$. In this case,
\begin{equation}\label{c229}
4\kappa_{\max}  \sum_{u_{ii} \geq 0} \tilde{F}^{ii} u_{ii} = 4\kappa_{\max}  \sum_{i} \tilde{F}^{ii} u_{ii} = 4\kappa_{\max}f^{\frac{1}{k-l}}.
\end{equation}
Based on \eqref{c2228} and \eqref{c229}, we derive
\begin{align*}
0 \leq  \tilde{F}^{ii} {P}_{ii} \leq 4\kappa_{\max}f^{\frac{1}{k-l}}
+ \Bigl[\beta C_3 -\bigl(A + \frac{1}{2}M\bigr) c_0  \Bigr] \Bigl( \sum_{i} \tilde{F}^{ii} + 1 \Bigr)<0,
\end{align*}
if  $A > \dfrac{4\kappa_{\max} \max_{x\in \bar{\Omega}}|f^{\frac{1}{k-l}}| + \beta C_3}{c_0} := A_1$. This is a contradiction.

\textbf{Case 2: }  $u_{nn}<0$ and $\frac{c_0}{3(4\kappa_{\max} + \frac{2}{n})}u_{11} \geq -u_{nn}$. Due to the equation, we obtain
\begin{equation*}
\sum_{u_{ii}<0} \tilde{F}^{ii} u_{ii} + \sum_{u_{ii}\geq 0} \tilde{F}^{ii} u_{ii} = f^{\frac{1}{k-l}}.
\end{equation*}
It follows from \eqref{c2226} that
\begin{equation}\label{c2210}
\begin{aligned}
4\kappa_{\max}\sum_{u_{ii} \geq 0} \tilde{F}^{ii} u_{ii}- \frac{2 }{n} \sum_{u_{ii} < 0} \tilde{F}^{ii} u_{ii}
=&4\kappa_{\max}f^{\frac{1}{k-l}}+(4\kappa_{\max}+\frac{2 }{n})\sum_{u_{ii} < 0} \tilde{F}^{ii} |u_{ii}|\\
\leq &4\kappa_{\max}f^{\frac{1}{k-l}}+(4\kappa_{\max}+\frac{2 }{n})\sum_{u_{ii} < 0} \tilde{F}^{ii} |u_{nn}|\\
\leq &4\kappa_{\max}f^{\frac{1}{k-l}}+\frac{c_0}{3}\sum_i\tilde{F}^{ii}u_{11}\\
\leq &4\kappa_{\max}f^{\frac{1}{k-l}}+c_0\Bigl( \frac{2A}{5} + \frac{M}{6} \Bigr)\sum_i\tilde{F}^{ii}.
 \end{aligned}
\end{equation}
Hence, from \eqref{c2228} and \eqref{c2210},
\begin{equation*}\label{c2211}
\begin{aligned}
  0\leq 4\kappa_{\max}f^{\frac{1}{k-l}}+c_0\Bigl( \frac{2A}{5} + \frac{M}{6} \Bigr)\sum_i\tilde{F}^{ii}
 + \Bigl[\beta C_3 -\bigl(A + \frac{1}{2}M\bigr) c_0  \Bigr] \Bigl( \sum_{i} \tilde{F}^{ii} + 1 \Bigr)
  < 0,
 \end{aligned}
\end{equation*}
if we choose $A > \dfrac{5\big(4\kappa_{\max} \max_{x\in \bar{\Omega}}|f^{\frac{1}{k-l}}| +\beta C_3\big)}{3c_0} := A_2$. This is a contradiction.

\textbf{Case 3: } $\frac{c_0}{3(4\kappa_{\max} + \frac{2}{n})}u_{11}< -u_{nn}$. In this case, we have $u_{11} \geq \frac{A}{5} + \frac{M}{5}$ by \eqref{c2226}, and $u_{22} \leq M_2(1 + M)$ from Theorem \ref{C211}. Therefore,
\begin{equation*}
u_{11} \geq \frac{1}{5M_2} u_{22}.
\end{equation*}
By Lemma \ref{l2}, we have
\begin{equation}\label{c2212}
\tilde{F}^{11} \geq C_2 \sum_{i} \tilde{F}^{ii},
\end{equation}
where $C_2$ depending on $M_2, \kappa_{\max}, n, k, l, \mathbf{p}, c_0$.
From \eqref{c2228} and \eqref{c2212},
\begin{align*}
0 \leq \tilde{F}^{ii} {P}_{ii}
\leq &-\frac{2\beta }{n} \Bigl(\frac{A}{5} + \frac{M}{5}\Bigr)C_2 \sum_{i} \tilde{F}^{ii}+4\kappa_{\max}\sum_{u_{ii} \geq 0} \tilde{F}^{ii} u_{ii}- \frac{2 }{n}\Bigl( f^{\frac{1}{k-l}}-\sum_{u_{ii}\geq 0} \widetilde{F}^{ii} u_{ii}\Bigr)\notag\\
&+ \Bigl[\beta C_3-\bigl(A + \frac{1}{2}M\Bigr) c_0  \bigr] \Bigl( \sum_{i} \tilde{F}^{ii} + 1 \Bigr)\notag\\
\leq &-\frac{2\beta }{n} \Bigl(\frac{A}{5} + \frac{M}{5}\Bigr)C_2 \sum_{i} \tilde{F}^{ii}+\Bigl(4\kappa_{\max}+\frac{2 }{n}\Bigr)M_2(1 + M)\sum_{i} \tilde{F}^{ii}\notag \\
&+ \Bigl[\beta C_3-\bigl(A + \frac{1}{2}M\bigr) c_0  \Bigr] \Bigl( \sum_{i} \tilde{F}^{ii} + 1 \Bigr)\notag\\
\leq &\Bigl[-\frac{2\beta C_2}{5n}A+ \Bigl(4\kappa_{\max}+\frac{2 }{n}\Bigr)M_2 \Bigr]\sum_{i} \tilde{F}^{ii}
+ \bigl(\beta C_3-A  c_0  \bigr) \Bigl( \sum_{i} \tilde{F}^{ii} + 1 \Bigr)\notag\\
&+\Bigl[-\frac{2\beta C_2}{5n}+ \Bigl(4\kappa_{\max}+\frac{2 }{n}\Bigr)M_2 \Bigr]M\sum_{i} \tilde{F}^{ii}\notag\\
<&~0\label{c2213},
\end{align*}
if $\beta > \dfrac{5nM_2 (2\kappa_{\max} + \frac{1}{n})}{C_2} $ and $A > 1+\frac{\beta C_3}{c_0}:=A_3 $. This is a contradiction.

Therefore ${P}$ attains its minimum on $\partial\Omega_\mu$. Moreover, on $\partial\Omega$,  $d = h = 0$, and $-Dd = \nu$, it is easy to see
$$    P = 0.$$
On $\partial\Omega_\mu \cap \Omega$,  $d = \mu$. Thus,
\begin{equation*}
P\geq -C_5(|u|_{C^1}, |\varphi(x)|_{C^0})  + \frac{9}{10}\mu A  > 0,
\end{equation*}
if we take $\beta = \frac{5nM_1\left(2\kappa_{\max} + \frac{1}{n}\right)}{C_2}+1$, $\mu = \min\{\frac{1}{10}, \frac{1}{\beta}\}$,  and
 $$A = \max\left\{A_1, A_2, A_3,5\beta C_4, \frac{10C_5}{9\mu}\right\}+1.$$
Finally the maximum principle tells us that $P \geq 0$ in  $\Omega_\mu$, and attains its minimum only on $\partial\Omega$.
Suppose $u_{\nu\nu}(z_0) = \max_{\partial\Omega} u_{\nu\nu}=M > 0$,
\begin{align*}
    0 &\geq P_\nu(z_0) \\
    &=\Bigl(u_{\nu\nu}(z_0)-\sum_ju_jd_{j\nu}+\varepsilon u_\nu-\varphi_\nu\Bigr)-\bigl(A + \frac{1}{2}M\bigr) h_\nu\\
    &\geq M - C - \bigl(A +\frac{1}{2}M\bigr).
\end{align*}
Then we obtain
\begin{equation*}
    \max_{\partial\Omega} u_{\nu\nu} \leq C.
\end{equation*}
The proof of Lemma~\ref{c23} is complete.

\end{proof}

Finally, we will prove the lower estimate of double normal second-order derivatives on the boundary.

\begin{lemma} \label{c24}
Let $\Omega \subset \mathbb{R}^n$ be a $C^3$ strictly convex domain, $f$ be  a positive function with $f^{\frac1{k-l}}\in C^2(\overline{\Omega})$ and $\varphi \in C^3(\partial \Omega)$. Suppose  $u \in C^3(\Omega) \cap C^2(\overline{\Omega})$ is the $(\Lambda, k)$-admissible solution of Hessian quotient equation ~\eqref{(1.7)}, then
\begin{equation*}
\min_{\partial \Omega} u_{\nu\nu} \geq -C,
\end{equation*}
where $C$ is a positive constant depending on $n, k, l,  \mathbf{p}, \Omega, |\varphi|_{C^2}, |f^\frac{1}{k-l}|_{C^1}$.
\end{lemma}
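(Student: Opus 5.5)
The plan mirrors the proof of Lemma~\ref{c23}, with the roles of maximum and minimum exchanged, and with Lemma~\ref{f11} replacing Lemma~\ref{l2}. We may assume $\min_{\partial\Omega}u_{\nu\nu}<0$ and $-\min_{\partial\Omega}u_{\nu\nu}\ge \max_{\partial\Omega}u_{\nu\nu}$; otherwise Lemma~\ref{c23} already gives the bound. Set $M=-\min_{\partial\Omega}u_{\nu\nu}>0$, attained at $z_0\in\partial\Omega$. Then $|u_{\nu\nu}|\le M$ on $\partial\Omega$, so Theorem~\ref{C211} gives $|D^2u|\le M_2(1+M)$ in $\overline\Omega$.

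Consider the test function
\[
\bar P(x)=(1+\beta d)\big[Du\cdot(-Dd)+\varepsilon u-\varphi\big]+\big(A+\tfrac12 M\big)h(x)
\]
in $\Omega_\mu$. We aim to show that $\bar P$ cannot attain an interior maximum. Suppose it does, at $x_0$, with $D^2u(x_0)$ diagonal. The computation parallel to \eqref{c224}, using Lemma~\ref{c26} with the favorable sign, gives
\[
0\ge \tilde F^{ii}\bar P_{ii}\ge -2\beta\tilde F^{ii}u_{ii}d_i^2-2(1+\beta d)\tilde F^{ii}u_{ii}d_{ii}+\big[(A+\tfrac12M)c_0-\beta C_3\big]\Big(\sum_i\tilde F^{ii}+1\Big).
\]
Split the indices into $B$ and $G$ as before. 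Now $\bar P_i=0$ forces
\[
u_{ii}\approx -\frac{(1-2d)(A+\tfrac12M)}{1+\beta d}\quad\text{for } i\in G,
\]
so $u_{11}\le -\frac{A+M}{5}<0$, where $1=i_0\in G$.

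The positive terms $-2(1+\beta d)\tilde F^{ii}u_{ii}d_{ii}$ with $u_{ii}<0$ are harmless, since $-d_{ii}\ge0$ gives them the right sign. The remaining bad terms have the form $\tilde F^{ii}u_{ii}$ with $u_{ii}>0$. We handle them with the identity $\sum\tilde F^{ii}u_{ii}=\tilde F\ge0$, which gives
\[
\sum_{u_{ii}>0}\tilde F^{ii}u_{ii}=f^{1/(k-l)}+\sum_{u_{ii}<0}\tilde F^{ii}|u_{ii}|.
\]
The key point is the term $-2\beta\tilde F^{11}u_{11}d_1^2\ge \frac{2\beta}{n}\tilde F^{11}\frac{A+M}{5}$. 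Since $\lambda_1=u_{11}<0$, Lemma~\ref{f11} gives $\tilde F^{11}\ge C_1\sum_i\tilde F^{ii}$, with a constant independent of $\delta,\epsilon$. This produces a good term $\frac{2\beta C_1}{5n}(A+M)\sum\tilde F^{ii}$.

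The bad terms come from $i\in B$ with $u_{ii}>0$ in $-2\beta\tilde F^{ii}u_{ii}d_i^2$, and are bounded by $\frac{2}{n}\sum_{u_{ii}>0}\tilde F^{ii}u_{ii}$. I expect this to be the main obstacle. By the identity above and $\tilde F^{ii}\le\tilde F^{11}$ (Proposition~\ref{P4}, since $u_{11}$ is the most negative only if we also order; otherwise we use $|u_{ii}|\le M_2(1+M)$), these terms are $\lesssim f^{1/(k-l)}+C(1+M)\sum\tilde F^{ii}$. That is linear in $M$ with a constant not involving $\beta$, so choosing $\beta$ large relative to $M_2/C_1$ lets the good term dominate. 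Unlike Lemma~\ref{c23}, no case split is needed, because $u_{11}<0$ directly triggers Lemma~\ref{f11}. One subtlety remains: the coefficient of $M$ in the good term is $\beta C_1/(5n)$, and it must beat the $O(M_2)$ coefficient. This is arranged through the choice of $\beta$, followed by $\mu\le 1/\beta$ and then $A$ large, exactly as in Lemma~\ref{c23}.

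Finally, on $\partial\Omega$ we have $\bar P=0$. On $\{d=\mu\}$, $\bar P\le C_5-\frac{9}{10}\mu A<0$ for large $A$. Hence $\bar P\le0$ in $\Omega_\mu$, and therefore $\bar P_\nu(z_0)\ge0$. Computing as at the end of Lemma~\ref{c23}, using $h_\nu=1$ at $\partial\Omega$, gives
\[
\bar P_\nu(z_0)=u_{\nu\nu}(z_0)+O(1)+A+\tfrac12M=-M+O(1)+A+\tfrac12M\ge0.
\]
Thus $M\le 2(A+C)$, which is the desired lower bound $\min_{\partial\Omega}u_{\nu\nu}\ge -C$.
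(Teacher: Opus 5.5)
Your proposal follows the paper's proof essentially step for step. It uses the same test function $\bar P$ and the same $B/G$ splitting, which forces $u_{11}\le -\frac{A+M}{5}<0$. It then applies Lemma~\ref{f11} with no case split to get $\tilde F^{11}\ge C_1\sum_i\tilde F^{ii}$, and Theorem~\ref{C211} to make the bad terms linear in $M$. The constants are chosen in the same order ($\beta$, then $\mu\le 1/\beta$, then $A$), and the argument ends with the normal derivative at $z_0$. Your final computation $-M+O(1)+A+\frac12 M\ge 0$ is the correct form of the paper's last display, which has sign typos.

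There is one sign error you must fix. Since $-D^2d\ge 0$, every diagonal entry satisfies $d_{ii}\le 0$. So for $u_{ii}<0$ the term $-2(1+\beta d)\tilde F^{ii}u_{ii}d_{ii}=2(1+\beta d)\tilde F^{ii}u_{ii}|d_{ii}|$ is nonpositive, not positive. In this group it is the terms with $u_{ii}\ge 0$ that are harmless, and you cannot discard the $u_{ii}<0$ ones in a lower bound. The repair is the paper's \eqref{c238}: using $0\le -d_{ii}\le\kappa_{\max}$ and $1+\beta d\le 2$,
\[
-2(1+\beta d)\sum_i\tilde F^{ii}u_{ii}d_{ii}\ge 4\kappa_{\max}\sum_{u_{ii}<0}\tilde F^{ii}u_{ii}\ge -4\kappa_{\max}M_2(1+M)\sum_i\tilde F^{ii}.
\]
This is again linear in $M$ with a coefficient independent of $\beta$. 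Your argument therefore closes once you require $\frac{2\beta C_1}{5n}>(4\kappa_{\max}+\frac{2}{n})M_2$, which is exactly the paper's condition $\beta>5n(2\kappa_{\max}+\frac1n)M_2/C_1$; this is why $\kappa_{\max}$ enters there.

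Also drop the parenthetical appeal to $\tilde F^{ii}\le\tilde F^{11}$. The index $1$ is chosen by maximizing $d_i^2$, not by ordering the $u_{ii}$, so Proposition~\ref{P4} gives nothing at that point. The crude bound $|u_{ii}|\le M_2(1+M)$ is what does the work, as in the paper.
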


\begin{proof}
Without loss of generality, we assume $\min_{\partial\Omega} u_{\nu\nu} < 0$. If $-\min_{\partial\Omega} u_{\nu\nu} < \max_{\partial\Omega} u_{\nu\nu}$, (i.e. $\max_{\partial\Omega} |u_{\nu\nu}| = \max_{\partial\Omega} u_{\nu\nu}$), the result follows immediately from Lemma~\ref{c23},
$$-\min_{\partial\Omega} u_{\nu\nu} < \max_{\partial\Omega} u_{\nu\nu} \leq C.$$
In the following, we assume $-\min_{\partial\Omega} u_{\nu\nu} \geq \max_{\partial\Omega} u_{\nu\nu}$, that is $\max_{\partial\Omega} |u_{\nu\nu}| = -\min_{\partial\Omega} u_{\nu\nu}$. Denote $\bar{M} = -\min_{\partial\Omega} u_{\nu\nu} > 0$ and let $\bar{z}_0 \in \partial\Omega$ such that $\min_{\partial\Omega} u_{\nu\nu} = u_{\nu\nu}(\bar{z}_0)$.
 In a similar way, we construct the test function as
\begin{equation*}\label{c231}
\bar{P}(x) = \big(1 + \beta d(x)\big)
\Big[ Du(x) \cdot (-Dd(x)) + \varepsilon u(x) - \varphi(x) \Big] + \big(A + \frac{1}{2}M \big)h(x),
\end{equation*}
where $\beta$ and $A$ are positive constants to be determined  later.

Assume that $\bar{P}$  attains its maximum at some point $\bar{x}_0 \in \Omega_\mu$. By rotating the coordinate system, we may assume that $D^2u({\bar{x}}_0)$
is diagonal. All subsequent computations are carried out at $\bar{x}_0$. First, we differentiate $\bar{P}$ twice to obtain
\begin{align}
0 = \bar{{P}}_i
&= \beta d_i \big( -\sum_j u_j d_j + \varepsilon u - \varphi \big) + (1 + \beta d) \big( -u_{ii}d_i - \sum_j u_j d_{ji} +\varepsilon u_i - \varphi_i \big) \notag \\
&\quad + \big(A + \frac{1}{2}\bar{M}\big) h_i\label{c232},
 \end{align}
and
\begin{align}
0 \geq  \tilde{F}^{ii} \bar{{P}}_{ii}
=&\, \beta \tilde{F}^{ii}d_{ii} \big( -\sum_j u_j d_j +\varepsilon u - \varphi \big)
+ 2\beta \tilde{F}^{ii}d_i \big( -u_{ii}d_i- \sum_j u_j d_{ji} + \varepsilon u_i - \varphi_i \big) \notag \\
&+(1 + \beta d)\tilde{F}^{ii} \big( -\sum_j u_{jii}d_j -2u_{ii} d_{ii}- \sum_j u_j d_{jii} +\varepsilon u_{ii} - \varphi_{ii} \big)\notag\\
&+ \big(A + \frac{1}{2}\bar{M}\big)\tilde{F}^{ii} h_{ii} \notag \\
\geq&\, -2\beta  \tilde{F}^{ii} u_{ii} d_i^2 - 2(1 + \beta d)\tilde{F}^{ii} u_{ii} d_{ii}
+ \Bigl[\bigl(A + \frac{1}{2}\bar{M}\bigr) c_0 -\beta C_6 \Bigr] \Bigl( \sum_{i} \tilde{F}^{ii} + 1 \Bigr), \label{c234}
\end{align}
where $C_6$ is a constant depending on $M_0, M_1, |\varphi|_{C^2}, |f^\frac{1}{k-l}|_{C^1}$ and $\Omega$, note that we have used Lemma \ref{c26} in the last line.

The set of indices $\{1,2,\ldots,n\}$ is divided into two subsets as follows:
$$B = \left\{ i \big|\, \beta d_i^2 < \frac{1}{n} \right\},~~ \quad ~G = B^c = \left\{ i \big|\, \beta d_i^2 \geq \frac{1}{n} \right\},$$
and choose $\beta> 1$, $\mu \leq\frac{1}{\beta}$. It implies $\sum_{i \in B} d_i^2 < 1 = |Dd|^2$, that means $G$ is not empty, there exists an $i_0 \in G$ such that $d_{i_0}^2\geq d_i^2$ for any $i\in G$, then
\begin{equation}\label{c235}
d_{i_0}^2 \geq \frac{1}{n}|Dd|^2 = \frac{1}{n}.
\end{equation}
 For any $i \in G$, it follows from \eqref{c232} that
\begin{equation*}
u_{ii} = \frac{( 2d-1)\left(A + \frac{1}{2}\bar{M}\right)}{1 + \beta d}  + \frac{\beta \left(-\sum_j u_j d_j + \varepsilon u - \varphi\right)}{1 + \beta d} - \frac{\sum_j u_j d_{ji} -\varepsilon u_i + \varphi_i}{d_i}.
\end{equation*}
 From \eqref{c226}, it holds
\begin{equation*}\label{c236}
\begin{aligned}
\left| \frac{\beta \left( -\sum_j u_j d_j + \varepsilon u - \varphi \right)}{1 + \beta d} - \frac{\sum_j u_j d_{ji} - \varepsilon u_i + \varphi_i}{d_i} \right|  \leq \beta C_4,
 \end{aligned}
\end{equation*}
where $C_4$ is a constant depending on $M_0, M_1, n, |\varphi|_{C^1}$ and $\Omega$. Let $A \geq 5\beta C_4$, then
\begin{equation*}
-\frac{6}{5}A - \frac{1}{2}\bar{M} \leq u_{ii} \leq -\frac{1}{5}A - \frac{1}{5}\bar{M}, \quad \forall~i \in G.
\end{equation*}
By \eqref{c235}, we may assume $i_0=1$,
\begin{equation}\label{c237}
\begin{aligned}
-2\beta \sum_{i} \tilde{F}^{ii} u_{ii} d_i^2 &= -2\beta \sum_{i \in G} \tilde{F}^{ii} u_{ii} d_i^2 - 2\beta \sum_{i \in B} \tilde{F}^{ii} u_{ii} d_i^2 \\
&\geq -2\beta \tilde{F}^{11} u_{11} d_1^2 - 2\beta \sum_{u_{ii} \geq 0, i \in B} \tilde{F}^{ii} u_{ii} d_i^2 \\
&\geq -\frac{2\beta }{n}\tilde{F}^{11} u_{11} - \frac{2 }{n} \sum_{u_{ii} \geq 0} \tilde{F}^{ii} u_{ii}\\
&=-\frac{2\beta }{n}\tilde{F}^{11} u_{11}-\frac{2 }{n}\Bigl( f^{\frac{1}{k-l}}-\sum_{u_{ii}<0} \widetilde{F}^{ii} u_{ii}\Bigr),
 \end{aligned}
\end{equation}
and
\begin{equation}\label{c238}
\begin{aligned}
-2(1+\beta d) \sum_{i } \tilde{F}^{ii} u_{ii} d_{ii} \geq  -2(1+\beta d) \sum_{u_{ii} < 0} \tilde{F}^{ii} u_{ii} d_{ii}
\geq 4\kappa_{\max}\sum_{u_{ii} < 0} \tilde{F}^{ii} u_{ii},
 \end{aligned}
\end{equation}
where $\kappa_{\max}$ is defined  in Lemma \ref{c26}.
Substituting \eqref{c237} and \eqref{c238} into \eqref{c234} yields
\begin{equation*}\label{c2338}
\begin{aligned}
0 \geq &-\frac{2\beta }{n}\tilde{F}^{11} u_{11}  +\Bigl(4\kappa_{\max}+ \frac{2 }{n}\Bigl)\sum_{u_{ii} < 0} \tilde{F}^{ii} u_{ii}-\frac{2 }{n} f^{\frac{1}{k-l}}\\
&+ \Bigl[\bigl(A + \frac{1}{2}\bar{M}\bigr) c_0 - \beta C_6 \Bigr] \Bigl( \sum_{i} \tilde{F}^{ii} + 1 \Bigr).
 \end{aligned}
\end{equation*}
Because $u_{11} < 0$,  by Lemma~\ref{f11} we obtain
$$\tilde{F}^{11} \geq C_1\sum_{i} \tilde{F}^{ii}.$$
Furthermore, together with Theorem \ref{C211},
\begin{equation*}
\begin{aligned}
0\geq &~\frac{2\beta }{n}\Bigl(\frac{1}{5}A + \frac{1}{5}\bar{M}\Bigr)C_1\sum_{i}\tilde{F}^{ii}- \Bigl(4\kappa_{\max}+ \frac{2 }{n}\Bigr)M_2(1+\bar{M})  \sum_{i} \tilde{F}^{ii}\\
 &+ \Bigl[\bigl(A + \frac{1}{2}\bar{M}\bigr) c_0 - \beta C_6-\frac{2 }{n} f^{\frac{1}{k-l}} \Bigr] \Bigl( \sum_{i} \tilde{F}^{ii} + 1 \Bigr)\\
>&~0,
\end{aligned}
\end{equation*}
if we choose $\beta > \dfrac{5n(2\kappa_{\max}+ \frac{1 }{n})M_2}{C_1} $ and $A > \dfrac{\beta C_6}{c_0} +\dfrac{\frac{2 }{n}\max_{x\in \bar{\Omega}}|f^{\frac{1}{k-l}}|}{c_0}+ 1$.
This contradicts to that $\bar{P}$ attains its maximum in the interior of $\Omega_\mu$. It implies that $\bar{P}$ attains its maximum on the boundary $\partial\Omega_\mu$. On $\partial\Omega$,  $d = h = 0$, and $-Dd = \nu$, it is easy to see
$$    \bar{P} = 0.$$
On $\partial\Omega_\mu \cap \Omega$,  $d = \mu$, thus
\begin{equation*}
    \bar{P}\leq C_7(|u|_{C^1}, |\varphi(x)|_{C^0})  - \frac{9}{10}\mu A  < 0.
\end{equation*}
if we take $\beta =\frac{5n(2\kappa_{\max}+ \frac{1 }{n})M_2}{C_1}+1$, $\mu = \min\{\frac{1}{10}, \frac{1}{\beta}\}$,  and
$$A = \max\left\{5\beta C_4, \frac{\beta C_6}{c_0} +\frac{\frac{2 }{n}\max_{x\in \bar{\Omega}}|f^{\frac{1}{k-l}}|}{c_0}+ 1, \frac{10C_7}{9\mu}\right\}+1.$$
Finally the maximum principle tells us that $\bar{P}$ in  $\Omega_\mu$, and attains its maximum only on $\partial\Omega$.
Suppose $u_{\nu\nu}(\bar{z}_0) = \min_{\partial\Omega} u_{\nu\nu}=\bar{M}$, then
\begin{equation*}
\begin{aligned}
    0 \leq& \bar{P}_\nu(\bar{z}_0) \\
    =& \Bigl[ u_{\nu\nu}(\bar{z}_0) - \sum_j u_j d_{j\nu} +\varepsilon u_\nu - \varphi_\nu \Bigr] + \Bigl( A + \frac{1}{2}\bar{M} \Bigr)\\
      \leq& \bar{M} + C +A +\frac{1}{2}\bar{M}.
\end{aligned}
\end{equation*}
Therefore,
\begin{equation*}
    \min_{\partial\Omega} u_{\nu\nu} \geq -C.
\end{equation*}
The proof of Lemma~\ref{c24} is complete.
\end{proof}

Combining with  Theorem \ref{C211}, Theorem \ref{C22}, Lemma \ref{c23} and Lemma \ref{c24},  we have
\begin{theorem}\label{c25}
 Let $\Omega \subset \mathbb{R}^n$ be a $C^3$  strictly convex domain, $f$ be a positive function with $f^{\frac1{k-l}}\in C^2(\overline{\Omega})$ and $\varphi \in C^3(\partial \Omega)$. Suppose $u \in C^3(\Omega) \cap C^2(\overline{\Omega})$ is the $(\Lambda, k)$-admissible solution of Hessian quotient equation ~\eqref{(1.7)}, then
$$\underset{\overline \Omega}{\max}|D^{2}u|\leq C,$$
where~$C$~is a positive constant depending on $n, k, l, p, |\varphi|_{C^3}, |f^{\frac{1}{k-l}}|_{C^{2}}$  and $ \Omega$.
\end{theorem}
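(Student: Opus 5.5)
The estimate is obtained by chaining the four preceding results. No new test function is needed; the work is checking that the constants do not feed back on themselves.

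First, Lemma~\ref{c23} and Lemma~\ref{c24} are combined to bound $\max_{\partial\Omega}|u_{\nu\nu}|$. The two lemmas appear to reference each other, so this needs care. Let $M^*=\max_{\partial\Omega}|u_{\nu\nu}|$. Either $M^*=\max_{\partial\Omega}u_{\nu\nu}$ or $M^*=-\min_{\partial\Omega}u_{\nu\nu}$. In the first alternative only the main argument of Lemma~\ref{c23} is used. There the test function $P$, the barrier $h$ from Lemma~\ref{c26}, the bound $u_{22}\le M_2(1+M)$ from Theorem~\ref{C211} (with $M=M^*$), and Lemma~\ref{l2} in Case~3 together give $M\le C$. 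In the second alternative only the main argument of Lemma~\ref{c24} is used. There $\bar P$, Lemma~\ref{f11} and Theorem~\ref{C211} give $\bar M\le C$. Neither branch invokes the other lemma, so there is no circularity. We conclude $\max_{\partial\Omega}|u_{\nu\nu}|\le C$, with $C$ depending only on $n,k,l,\mathbf p,\Omega,|\varphi|_{C^2}$, $|f^{1/(k-l)}|_{C^1}$, and on $M_0,M_1$ through $C_3,C_4,C_6$.

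Next, this bound is inserted into Theorem~\ref{C211}, which gives
\[
\sup_{\overline\Omega}|D^2u|\le M_2\Bigl(1+\sup_{\partial\Omega}|u_{\nu\nu}|\Bigr)\le C .
\]
Theorem~\ref{C22} is not strictly needed for this final inequality, since Theorem~\ref{C211} already reduces everything to $u_{\nu\nu}$. It does, however, confirm that the mixed boundary derivatives are controlled, and on $\partial\Omega$ this agrees with the tangential part of Subcase~2.2 of Theorem~\ref{C211}. Finally, the dependence of $M_0$ on $\sup f$, $|\varphi|_{C^0}$ (Theorem~\ref{C0}) and of $M_1$ on $|\varphi|_{C^2}$, $|f^{1/(k-l)}|_{C^1}$ (Theorem~\ref{C1}) shows that $C$ depends only on $n,k,l,\mathbf p,|\varphi|_{C^3},|f^{1/(k-l)}|_{C^2},\Omega$. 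In particular $C$ is independent of $\varepsilon$ and of $\inf f$.

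The only delicate point is the constant bookkeeping in the first step. The constants $\beta$ and $A$ must depend on $M_2$ but not on $M$ or $\bar M$. This holds because Theorem~\ref{C211} gives a bound that is linear in $\sup|u_{\nu\nu}|$ with slope $M_2$. That slope is absorbed by choosing $\beta$ larger than a multiple of $M_2/C_2$, respectively $M_2/C_1$, and $C_1,C_2$ are independent of $M$. In Case~3 of Lemma~\ref{c23}, Lemma~\ref{l2} is applied with $\delta=1/(5M_2)$ and $\epsilon$ fixed by $c_0,\kappa_{\max}$, so $C_2$ also does not depend on $M$.
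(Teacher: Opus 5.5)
Your proposal is correct and follows the paper's own argument. The paper proves Theorem~\ref{c25} by inserting the boundary bounds of Lemmas~\ref{c23} and~\ref{c24} into Theorem~\ref{C211}. Your split according to whether $\max_{\partial\Omega}|u_{\nu\nu}|$ equals $\max_{\partial\Omega}u_{\nu\nu}$ or $-\min_{\partial\Omega}u_{\nu\nu}$ is exactly the case division built into those two lemmas, so their mutual references are not circular. Your bookkeeping remarks are also accurate: $\beta$ and $A$ do not depend on $M,\bar M$ because the slope $M_2$ is absorbed by taking $\beta$ larger than a multiple of $M_2/C_2$, resp.\ $M_2/C_1$, and Theorem~\ref{C22} is not actually needed. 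One small slip: since $\beta$ depends on $M_2$, the intermediate bound on $|u_{\nu\nu}|$ already depends on $|\varphi|_{C^3}$ and $|f^{\frac{1}{k-l}}|_{C^2}$, not only on the lower norms you list; this does not change the final dependence you state.
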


\section{Existence of the Neumann Boundary Problem}
In this section, we complete the proof of Theorem \ref{T1} and Theorem \ref{T2}.

\begin{proof}[\textbf{Proof of Theorem \ref{T1}.}]   Let $f_{j}\in C^{\infty}(\overline{\Omega})$ be positive functions such that \mbox{$|f_{j}^{\frac1{k-l}}|_{C^2(\Omega)}\leq C$}, and \mbox{$\lim_{j \to \infty} |f_j - f|_{C^2(\overline{\Omega})} = 0$}, where the constant $C$ only depends on $|f^{\frac1{k-l}}|_{C^2(\overline{\Omega})}$. Consider  the  Neumann problem for the following approximation equation
\begin{eqnarray}\label{(6.1)}
\begin{cases}
\frac{\sigma_k(\Lambda(D^2u_{s}^j))}{\sigma_l(\Lambda(D^2u_{s}^j))}=f_j(x)&\text{in}~ \Omega,\\\\
(u_{s}^j)_\nu=-\varepsilon_su_{s}^j+\varphi&\text{on}~ \partial\Omega.
\end{cases}
\end{eqnarray}
for  any $\varepsilon _{s}\in ( 0, 1)$.
Using Theorems \ref{C0}, \ref{C1} and \ref{c25}, we derive the global $C^2$ a priori estimates. Hence,  equation \eqref{(6.1)} is uniformly elliptic in $\overline{\Omega}$. As in \cite{LION}, one obtains global H\"{o}lder estimates for the second derivatives of solutions to  \eqref{(6.1)}. Applying the method of continuity,  there exists a $(\Lambda, k)$-admissible solution $u_{s}^j\in C^{3, \alpha} (\overline{\Omega})$ satisfies  the Neumann problem of  equation \eqref{(6.1)}.
 We also obtain a uniform bound (independent of  $\varepsilon_{s}$)  for the $\varepsilon_su_s^j$, $Du_{s}^{j}$ and $D^{2}u_{s}^{j}$. Then we set $v_{s}^{j}=u_{s}^{j}-\frac{1}{|\Omega|}\int_{\Omega}u_{s}^{j}dx$, then $v_{s}^{j}\in C^{2,\alpha}(\overline{\Omega})$ satisfies the following Neumann
boundary problem:
\begin{eqnarray*}\label{(6.2)}
\begin{cases}
\frac{\sigma_k(\Lambda(D^2v_s^j))}{\sigma_l(\Lambda(D ^2v_s^j))}=f_j(x)&\text{in}~\Omega,\\\\
(v_s^j)_\nu=-\varepsilon_sv_s^j-\frac{1}{|\Omega|}\int_\Omega\varepsilon_su_s^jdx+\varphi&\text{on}~\partial\Omega.
\end{cases}
\end{eqnarray*}

There is a constant $\beta_{j}$ and a function $v^{j}\in C^{2}(\overline{\Omega})$ such that $-\varepsilon_{s}u_{s}^{j}\rightarrow\beta_{j}$, $-\varepsilon_{s}v_{s}^{j}\to0$, $-{\frac{1}{|\Omega|}}\int_{\Omega}\varepsilon_{s}u_{s}^{j}dx\to\beta_{j}$ and $v_{s}^{j}\longrightarrow v^{j}$ uniformly in $C^{2}(\overline{\Omega})$ as $\varepsilon_{s}\rightarrow0$. It is easy to verify that $v^{j}$ is the  $(\Lambda, k)$-admissible solution of
\begin{eqnarray*}\label{(6.3)}
\begin{cases}
\frac{\sigma_k(\Lambda(D^2v^j))}{\sigma_l(\Lambda(D ^2v^j))}=f_j(x)&\text{in}~\Omega,\\\\
v^j_\nu=\beta_j+\varphi(x)&\text{on}~\partial\Omega.
\end{cases}
\end{eqnarray*}

And we can get the results that $|\beta_{j}|$ and $|v^j|_{C^2(\overline{\Omega})}$ can be controlled by a constant $C$ which depends on $n, k, l, \mathbf{p}, \Omega, |\varphi|_{C^3}, |f^{\frac1{k-l}}|_{C^2}$. By the compactness argument, we know that there exists a function $v\in C^{1,1}(\overline{\Omega})$ and a constant $c$ such that $\lim_{j\to+\infty}|v^j-v|_{C^1(\bar\Omega)}=0$ and $\lim_{j\to+\infty}\left|\beta_{j}-c\right|=0$.

Let $j\rightarrow+\infty$, then $v$ is a  $(\Lambda, k)$-admissible  solution of the following Neumann boundary problem
\begin{eqnarray*}\label{(6.4)}
\begin{cases}
\frac{\sigma_k(\Lambda(D^2v))}{\sigma_l(\Lambda(D^2v))}=f(x)&\text{in}~\Omega,\\\\
v_\nu=c+\varphi(x)&\text{on}~\partial\Omega.
\end{cases}
\end{eqnarray*}
Uniqueness follows from the comparison principle for fully nonlinear degenerate elliptic equations (see \cite{ST}) together with the Hopf lemma.
This completes the proof of Theorem \ref{T1}.
\end{proof}
\begin{proof}[\textbf{Proof of Theorem \ref{T2}.}]
By taking $\varepsilon = 1$, the global $C^2$ a priori estimates for equation~\eqref{(1.8)} follow from Theorems~\ref{C0}, \ref{C1}, and~\ref{c25}. The existence and higher-order regularity of the solution then follow from the method of continuity and standard elliptic regularity theory. We omit the details.
\end{proof}

\

\end{document}